\documentclass[a4paper,11pt]{article}

\usepackage[a4paper,top=1in,bottom=1in,left=0.75in,right=0.75in]{geometry}
\usepackage{amsmath,amssymb,amscd,amsthm,mathrsfs,bm}
\usepackage{enumitem}
\usepackage{graphicx}
\usepackage{epstopdf}
\usepackage{booktabs}
\usepackage{threeparttable}
\usepackage{xcolor}
\usepackage[numbers,sort&compress]{natbib}

\usepackage[T1]{fontenc}
\usepackage{mathpazo,microtype}
\usepackage{hyperref}
\definecolor{myblue}{rgb}{0.2,0,0.9}
\definecolor{blue-violet}{rgb}{0.54,0.17,0.89}

\hypersetup{hidelinks,pdftitle={Consumption and Investment When Welfare Floors Change over Time},pdfauthor={Gugyum Ha, Junkee Jeon, Takwon Kim}}

\numberwithin{equation}{section}

\theoremstyle{plain}
\newtheorem{as}{Assumption}[section]

\newtheorem{thm}{Theorem}[section]
\newtheorem{lem}[thm]{Lemma}
\newtheorem{prop}[thm]{Proposition}
\newtheorem{cor}[thm]{Corollary}

\theoremstyle{definition}
\newtheorem{pr}{Problem}[section]

\theoremstyle{remark}
\newtheorem{rem}{Remark}[section]

\newcommand{\cL}{\mathscr{L}}

\begin{document}

\title{\LARGE \textbf{Consumption and Investment When Welfare Floors Change over Time}}

\author{
Gugyum Ha \footnote{E-mail: \href{mailto:ggha@sogang.ac.kr}{\texttt{ggha@sogang.ac.kr}}\;Department of Mathematics, Sogang University, Seoul 04107, Republic of Korea.}
	\and
    Junkee Jeon \footnote{E-mail: \href{mailto:junkeejeon@khu.ac.kr}{\texttt{junkeejeon@khu.ac.kr}}\;Department of Applied Mathematics, Kyung Hee University, Yongin 17104, Republic of Korea.}
	\and
    Takwon Kim\footnote{Corresponding author. E-mail: \href{mailto:takwon@sungshin.ac.kr}{\texttt{takwon@sungshin.ac.kr}}\;School of Mathematics, Statistics and Data Science, Sungshin Women's University, Seoul 02844, Republic of Korea.}
}

\date{}

\maketitle
\pagestyle{plain}
\pagenumbering{arabic}

\begin{abstract}
We study consumption and investment over a finite horizon under time dependent welfare floors and a terminal wealth floor. A cumulative multiplier yields a parabolic obstacle problem. For relative risk aversion above one, the intervention boundary exists at every interior date; below one, a running record criterion characterizes active dates, so the boundary can disappear and reenter. We establish boundary regularity, verify optimal controls, and identify the exact lower endpoint of feasible wealth. With stochastic income in a complete market, an outside option determines the floor and links the boundary to limited commitment and borrowing capacity.
\end{abstract}

\begin{flushleft}
\small
\textbf{Keywords:} Dynamic welfare constraints, consumption and investment,
singular control, free boundary, endogenous credit limit.\\[0.4em]
\textbf{JEL classification:} C61, D14, G11.\\
\textbf{Mathematics Subject Classification (2020):} 91G10, 93E20, 35R35.\\
\textbf{Short title:} Welfare floors that change over time.
\end{flushleft}

\section{Introduction}\label{sec:intro}

Dynamic lower bounds on continuation utility arise whenever an allocation must remain acceptable at every future date and not only at the start. Limited commitment is the leading example. If a household can walk away from a financial arrangement, the continuation utility it is promised must be at least the value of its outside option at each date. This is the participation constraint of \citet{KehoeLevine1993}, \citet{Kocherlakota1996}, and \citet{AlvarezJermann2000}, and of the continuous time formulations in \citet{MiaoZhang2015} and \citet{ChoiKooLimYoo2024}. A welfare floor can also describe a minimum acceptable standard of living or another sustainability requirement. In each of these settings the constraint is imposed on continuation value itself.

The floor need not be constant. The value of an outside option may vary with the time remaining, and at the horizon the welfare requirement becomes a minimum wealth condition through bequest utility. We ask how a welfare floor that changes over time, together with the terminal requirement it implies, shapes consumption, portfolio choice, and feasible borrowing over a finite horizon. With a moving floor, whether the constraint can bind at all may depend on the date.

We study an agent with CRRA preferences who receives labor income and chooses consumption and risky investment over a fixed horizon. At every date, continuation utility must stay above a deterministic welfare floor that may vary with time. Following the martingale approach of \citet{KaratzasLehoczkyShreve1987} and \citet{CoxHuang1989}, together with the treatment of dynamic constraints in \citet{HePages1993} and \citet{ElKarouiJeanblanc1998}, we represent the continuum of welfare constraints by a nondecreasing cumulative multiplier. The duality theory behind this representation has been extended to random endowments, constraints that depend on the wealth path, and intermediate consumption in incomplete markets by \citet{CvitanicSchachermayerWang2001}, \citet{BouchardPham2004}, \citet{Mostovyi2015}, and \citet{GuasoniWang2020}. The dual problem is a singular control problem of monotone follower type on a finite horizon, and its connection with optimal stopping goes back to \citet{KaratzasShreve1984}. Homogeneity reduces the state dimension, and the gradient constraint leads to a one dimensional parabolic obstacle problem with a time dependent obstacle and a kinked terminal condition. The general theory of such variational inequalities can be found in \citet{KinderlehrerStampacchia1980} and \citet{PetrosyanShahgholianUraltseva2012}.

Our main technical contribution is a description of the free boundary geometry that does not rely on monotonicity in time. Unit relative risk aversion separates two cases. When relative risk aversion exceeds one, the contact set is nonempty at every interior date. When it is below one, contact occurs only at dates where a discounted transformation of the welfare floor is locally increasing and sets a strict running record relative to its past values. The free boundary can therefore diverge, disappear on a nontrivial interval, and later reenter. We prove existence, uniqueness, and monotonicity in the state variable for the solution of the obstacle problem. Near the boundary we bound the time derivative of the solution by its space derivative, and this estimate gives quadratic growth, local Lipschitz continuity, and local $C^{1,\alpha}$ regularity of the boundary for every $\alpha<1/2$, together with its behavior at the endpoints of the active dates. Related regularity results for parabolic obstacle problems and American exercise boundaries appear in \citet{BlanchetDolbeaultMonneau2006}, \citet{Villeneuve1999}, \citet{ChenChadam2007}, and \citet{LambertonMikou2008}.

The optimal cumulative multiplier is the minimal reflected process that keeps the normalized dual state below the intervention boundary, and it increases only when the welfare constraint binds. We construct the dual value from the obstacle solution, prove the moment bounds needed for verification, and recover the optimal primal policies. The moment bounds follow from a representation of the multiplier as a running supremum, in the spirit of the stochastic representation theorem of \citet{BankElKaroui2004}. In wealth space the dual free boundary becomes an endogenous credit limit that varies with time, and this limit is the exact lower endpoint of the set of feasible wealth levels. Even when no intervention boundary exists before the horizon, the terminal wealth floor can keep feasible wealth strictly above the natural debt limit.

With stochastic labor income the model has a limited commitment interpretation. We take the welfare floor to be the value of an outside option in which the household leaves the arrangement and consumes a fraction of its labor income. Under complete markets and CRRA homogeneity, scaling by current income and a change of measure reduce this problem to the baseline model, so the same free boundary analysis applies. A more attractive outside option raises the endogenous lower bound on the wealth to income ratio and reduces borrowing capacity. In the numerical examples its effect on consumption and risky investment is largest near the credit limit.

The closest paper is \citet{JeonKwak2024Welfare}, who study a constant welfare floor in an infinite horizon consumption and investment problem. A floor that changes over time on a finite horizon brings in terminal welfare, genuine nonstationarity, and an endogenous set of active dates on which the free boundary is finite. The feasible wealth domain then changes with time, whereas the stationary model has a single minimum wealth level. Our analysis is also related to free boundary problems on a finite horizon in portfolio choice and mathematical finance, including \citet{DaiYi2009}, \citet{Grandits2015}, \citet{DeAngelisStabile2019}, \citet{YangJeon2026}, and \citet{KimYangJeon2026}. Free boundaries arising from singular control in irreversible or partially reversible investment are studied in \citet{RiedelSu2011} and \citet{DeAngelisFerrari2014}. Dynamic constraints on the wealth path lead to related free boundaries in the drawdown problem of \citet{ElieTouzi2008} and in the regularity analysis of \citet{FedericoGassiatGozzi2015}. Preferences that compare consumption with a reference level give a related singular control structure in the infinite horizon model of \citet{DengLiPhamYu2022}. In the present problem, whether an intervention boundary exists depends on the date. We characterize the active dates by a running record condition and map the dual boundary exactly to the primal credit limit.

Section~\ref{sec:model} formulates the model. Section~\ref{sec:optimization} derives the dual formulation as a singular control problem and the reduced variational inequality. Section~\ref{sec:parabolic-obstacle} analyzes the obstacle problem and its free boundary. Section~\ref{sec:verification-duality} establishes verification and strong duality and represents the endogenous credit limit. Section~\ref{sec:implications} treats stochastic income and limited commitment. Section~\ref{sec:conclusion} concludes, and the Appendix contains the proofs.

\section{Model}\label{sec:model}

\subsection{Financial market and wealth dynamics}

The market consists of a riskless asset with rate $r>0$ and a risky asset $S=(S_t)_{0\le t\le T}$ satisfying
\begin{align*}
\frac{dS_t}{S_t}=\mu\,dt+\sigma\,dB_t,
\qquad 0\le t\le T,
\end{align*}
where $\mu\neq r$ and $\sigma>0$ are constants, and $B=(B_t)_{0\le t\le T}$ is a Brownian motion on a standard probability space $(\Omega,\mathcal F,\mathbb P)$ endowed with the augmented filtration $\mathbb F=\{\mathcal F_t\}_{0\le t\le T}$ generated by $B$.

The agent receives labor income at the constant rate $\epsilon>0$ over $[0,T]$.
Given an $\mathbb F$-progressively measurable pair $(c,\pi)$, where $c_t>0$ denotes the consumption rate and $\pi_t$ denotes the dollar amount invested in the risky asset at time $t$, the associated wealth process $X^{c,\pi}=(X_t^{c,\pi})_{0\le t\le T}$ satisfies
\begin{align}
dX_t^{c,\pi}
=
\big(rX_t^{c,\pi}+\pi_t(\mu-r)-c_t+\epsilon\big)\,dt
+\sigma\pi_t\,dB_t,
\qquad
X_0^{c,\pi}=x.
\label{eq:wealth_finite}
\end{align}

The time-$t$ present value of future labor income is
\begin{align}
h(t)
:=
\int_t^T e^{-r(s-t)}\epsilon\,ds
=
\frac{\epsilon}{r}\big(1-e^{-r(T-t)}\big),
\qquad 0\le t\le T.
\label{eq:human_wealth_finite}
\end{align}
We assume
\begin{align*}
x>-h(0)=-\frac{\epsilon}{r}\big(1-e^{-rT}\big).
\end{align*}

\subsection{Preferences, bequest utility, and continuation value}

Consumption and bequest utility are CRRA with $\gamma>0$, $\gamma\neq1$. For $c>0$ and $x>0$,
\begin{align}
u(c)=\frac{c^{1-\gamma}}{1-\gamma},
\label{eq:utility_crra}
\end{align}
and
\begin{align}
U_B(x)=u(\kappa_B x)=\frac{(\kappa_B x)^{1-\gamma}}{1-\gamma},
\label{eq:bequest_crra}
\end{align}
where $\kappa_B>0$ is a constant scaling parameter for bequest utility.

Given a strategy $(c,\pi)$, the agent evaluates it through
\begin{align}
\mathbb E\bigg[
\int_0^T e^{-\beta t}u(c_t)\,dt
+
e^{-\beta T}U_B(X_T^{c,\pi})
\bigg],
\label{eq:objective_finite}
\end{align}
where $\beta>0$ is the subjective discount rate.

The continuation value is
\begin{align}
{\cal W}_t
:=
\mathbb E_t\bigg[
\int_t^T e^{-\beta(s-t)}u(c_s)\,ds
+
e^{-\beta(T-t)}U_B(X_T^{c,\pi})
\bigg],
\label{eq:continuation_value_finite}
\end{align}
where $\mathbb E_t[\cdot]=\mathbb E[\cdot\mid\mathcal F_t]$.

\subsection{Dynamic welfare constraint}

Let $P:[0,T]\to\mathbb R$ be a deterministic welfare floor and impose
\begin{align}
{\cal W}_t\ge P(t),
\qquad \forall t\in[0,T].
\label{eq:welfare_constraint_finite}
\end{align}

\begin{as}\label{as:P_finite}
The welfare floor $P:[0,T]\to\mathbb R$ belongs to $C^{1,1}([0,T])$, that is, $P$ is continuously differentiable and $P'$ is Lipschitz continuous.
Moreover,
\begin{align}
(1-\gamma)P(t)>0,
\qquad \forall t\in[0,T].
\label{eq:P_sign_condition}
\end{align}
Equivalently,
\begin{align*}
P(t)>0 \quad \text{for all } t\in[0,T] \qquad \text{if } \gamma\in (0,1),
\end{align*}
whereas
\begin{align*}
P(t)<0 \quad \text{for all } t\in[0,T] \qquad \text{if } \gamma\in (1,\infty).
\end{align*}
\end{as}
The Lipschitz continuity of $P'$ is needed for the optimal regularity in Lemma~\ref{lem:existence-R}, for the local contact bound in Lemma~\ref{lem:necessary-contact-R}, and for the estimates of the time derivative in Lemma~\ref{lem:time-space-derivative-estimate} and Proposition~\ref{prop:boundary-higher-regularity}, which require $P''\in L^\infty$. The sign condition \eqref{eq:P_sign_condition} costs no generality. If $\gamma>1$ and $P(t)\ge0$ for some $t$, no strategy satisfies \eqref{eq:welfare_constraint_finite} because ${\cal W}_t<0$. If $\gamma\in(0,1)$ and $P(t)\le0$, the constraint is void at $t$ because ${\cal W}_t>0$.

\begin{rem}
Since ${\cal W}_T=U_B(X_T^{c,\pi})$, the constraint \eqref{eq:welfare_constraint_finite} at $t=T$ reads $U_B(X_T^{c,\pi})\ge P(T)$. Assumption \ref{as:P_finite} ensures a unique $\underline{x}_B>0$ satisfying
\begin{align}
U_B(\underline{x}_B)=P(T),
\qquad\text{that is,}\qquad
\underline{x}_B
=
\frac{1}{\kappa_B}\Big((1-\gamma)P(T)\Big)^{\frac{1}{1-\gamma}},
\label{eq:xB_bar_definition}
\end{align}
and since $U_B$ is strictly increasing, the terminal constraint is equivalent to $X_T^{c,\pi}\ge \underline{x}_B$. Thus the dynamic welfare constraint contains a terminal wealth floor $\underline{x}_B$ induced by the welfare floor.
\end{rem}

Under the integrability condition below, the continuation value has a continuous version in the Brownian filtration. We use this version throughout, so the constraint \eqref{eq:welfare_constraint_finite} holds at all dates simultaneously outside one null set. Its dynamics are
\begin{align}
d{\cal W}_t
=
\big(\beta{\cal W}_t-u(c_t)\big)\,dt+\varsigma_t^{\mathcal W}\,dB_t,
\qquad
{\cal W}_T=U_B(X_T^{c,\pi}),
\label{eq:W_dynamics_finite}
\end{align}
for some $\mathbb F$-progressively measurable process $\varsigma^{\mathcal W}=(\varsigma_t^{\mathcal W})_{0\le t\le T}$.

\subsection{Admissible strategies}

A progressively measurable pair $(c,\pi)$ is admissible, written $(c,\pi)\in{\cal A}(x)$, if the following conditions hold.
\begin{itemize}
\item[(a)] 
\begin{align}
\int_0^T c_s\,ds<\infty
\quad\mbox{a.s.},
\qquad
\int_0^T \pi_s^2\,ds<\infty
\quad\mbox{a.s.}
\label{eq:admissible_finite_1}
\end{align}

\item[(b)]
\begin{align}
\mathbb E\bigg[
\int_0^T e^{-\beta t}|u(c_t)|\,dt
+
e^{-\beta T}|U_B(X_T^{c,\pi})|
\bigg]
<\infty,
\label{eq:admissible_finite_2}
\end{align}
so that the objective functional and the continuation value are well defined.

\item[(c)] The continuation value process satisfies the dynamic welfare constraint \eqref{eq:welfare_constraint_finite}.

\item[(d)] The wealth process stays above the natural debt limit,
\begin{align}
X_t^{c,\pi}>-h(t)
=
-\frac{\epsilon}{r}\big(1-e^{-r(T-t)}\big)
\quad\mbox{a.s.}\quad \forall t\in[0,T].
\label{eq:admissible_finite_3}
\end{align}
\end{itemize}

Since $h(T)=0$, the natural debt limit implies
\begin{align*}
X_T^{c,\pi}>0
\qquad \mbox{a.s.},
\end{align*}
and the terminal welfare constraint further requires
\begin{align*}
X_T^{c,\pi}\ge \underline{x}_B.
\end{align*}

\subsection{Primal problem}

\begin{pr}\label{pr:main_finite}
For a given initial wealth $x>-h(0)$, define
\begin{align*}
V(x)
=
\sup_{(c,\pi)\in{\cal A}(x)}
\mathbb E\bigg[
\int_0^T e^{-\beta t}u(c_t)\,dt
+
e^{-\beta T}U_B(X_T^{c,\pi})
\bigg].
\end{align*}
\end{pr}

\section{Optimization}\label{sec:optimization}

This section derives a dual formulation of Problem~\ref{pr:main_finite}. The welfare multiplier enters both the consumption term and the terminal wealth term.

\subsection{Static budget constraint}

Let
\[
\theta:=\frac{\mu-r}{\sigma},
\]
and define the state price density by
\begin{equation}\label{eq:state_price_density_finite}
{\cal H}_t
=
\exp\!\left(
-\Big(r+\frac{\theta^2}{2}\Big)t-\theta B_t
\right),
\qquad 0\le t\le T.
\end{equation}
For an admissible strategy, It\^o's formula and the identity $h'-rh+\epsilon=0$, which follows from \eqref{eq:human_wealth_finite}, show that $\mathcal H_t(X_t^{c,\pi}+h(t))+\int_0^t\mathcal H_sc_s\,ds$ is a nonnegative local martingale. It is therefore a supermartingale. Evaluating its expectation at $T$ shows that any admissible strategy $(c,\pi)\in{\cal A}(x)$ satisfies
\begin{equation}\label{eq:budget_finite}
\mathbb E\bigg[
\int_0^T {\cal H}_t(c_t-\epsilon)\,dt
+
{\cal H}_T X_T^{c,\pi}
\bigg]
\le x.
\end{equation}

For $y>0$, define
\begin{equation}\label{eq:Y_process_finite}
{\cal Y}_t^y:=y e^{\beta t}{\cal H}_t,
\qquad 0\le t\le T.
\end{equation}
Then ${\cal Y}^y$ solves
\[
\frac{d{\cal Y}_t^y}{{\cal Y}_t^y}
=
(\beta-r)\,dt-\theta\,dB_t,
\qquad
{\cal Y}_0^y=y.
\]

\subsection{Heuristic derivation}

Let $y>0$ and a nonnegative optional process $\eta=(\eta_t)_{0\le t\le T}$ be multipliers for the budget constraint \eqref{eq:budget_finite} and the welfare constraint \eqref{eq:welfare_constraint_finite}, and form the Lagrangian by adding $y\big(x-\mathbb E[\int_0^T{\cal H}_t(c_t-\epsilon)\,dt+{\cal H}_TX_T^{c,\pi}]\big)+\mathbb E[\int_0^T\eta_t({\cal W}_t-P(t))\,dt]$ to the objective \eqref{eq:objective_finite}.
Applying Fubini's theorem formally to the last term and defining the cumulative multiplier
\[
{\cal Z}_t:=\int_0^t e^{\beta s}\eta_s\,ds,
\qquad 0\le t\le T,
\]
which is nonnegative and nondecreasing, the Lagrangian of Problem~\ref{pr:main_finite} becomes, by \eqref{eq:Y_process_finite},
\begin{equation}\label{eq:Lagrangian_finite}
\begin{aligned}
\mathfrak L
=
&\,
\mathbb E\bigg[
\int_0^T e^{-\beta t}
\Big(
(1+{\cal Z}_t)u(c_t)
-
{\cal Y}_t^y c_t
+
{\cal Y}_t^y\epsilon
\Big)\,dt
\\
&\hspace{1.6cm}
+
e^{-\beta T}
\Big(
(1+{\cal Z}_T)U_B(X_T^{c,\pi})
-
{\cal Y}_T^y X_T^{c,\pi}
\Big)
-
\int_{[0,T]} e^{-\beta t}P(t)\,d{\cal Z}_t
\bigg]
+
yx.
\end{aligned}
\end{equation}
We extend the class of cumulative multipliers to general nondecreasing processes, which allows singular continuous increments and atoms at $0$ and $T$. The Stieltjes integrals are therefore taken over closed intervals.
\subsection{Pointwise optimization}

Let $\tilde u$ be the convex dual of $u$:
\[
\tilde u(q):=\sup_{c\ge0}\big(u(c)-qc\big),
\qquad q>0.
\]
Under the CRRA specification
\[
u(c)=\frac{c^{1-\gamma}}{1-\gamma},
\qquad \gamma>0,\ \gamma\neq 1,
\]
we have
\begin{equation}\label{eq:tilde_u_CRRA}
I(q)=(u')^{-1}(q)=q^{-1/\gamma},
\qquad
\tilde u(q)=\frac{\gamma}{1-\gamma}q^{-\frac{1-\gamma}{\gamma}}.
\end{equation}

Because the welfare constraint at time $T$ implies $X_T^{c,\pi}\ge \underline{x}_B$, we define the constrained convex dual of $U_B$ by
\[
\tilde U_B(q):=\sup_{x\ge \underline{x}_B}\big(U_B(x)-qx\big),
\qquad q>0.
\]
Since
\[
U_B(x)=\frac{(\kappa_B x)^{1-\gamma}}{1-\gamma},
\qquad
U_B'(x)=\kappa_B^{1-\gamma}x^{-\gamma},
\]
the constrained maximizer is
\[
\hat x_B(q)
=
\max\left\{
\underline{x}_B,\,
\kappa_B^{\frac{1-\gamma}{\gamma}}q^{-1/\gamma}
\right\},
\]
and therefore
\begin{equation}\label{eq:tilde_UB_CRRA}
\tilde U_B(q)
=
\begin{cases}
\displaystyle
\frac{\gamma}{1-\gamma}\,
\kappa_B^{\frac{1-\gamma}{\gamma}}
q^{-\frac{1-\gamma}{\gamma}},
& \text{if } q\le \kappa_B^{1-\gamma}\underline{x}_B^{-\gamma},
\\[2ex]
\displaystyle
P(T)-q\underline{x}_B,
& \text{if } q> \kappa_B^{1-\gamma}\underline{x}_B^{-\gamma}.
\end{cases}
\end{equation}
The kink in \eqref{eq:tilde_UB_CRRA} comes from the terminal wealth floor.

Optimizing \eqref{eq:Lagrangian_finite} pointwise over $c$ and $X_T$ yields
\[
\hat c_t
=
I\!\left(\frac{{\cal Y}_t^y}{1+{\cal Z}_t}\right),
\qquad
\hat X_T
=
\max\bigg\{
\underline{x}_B,\,
\frac{1}{\kappa_B}
I\!\left(
\frac{{\cal Y}_T^y}{\kappa_B(1+{\cal Z}_T)}
\right)
\bigg\},
\]
and hence
\[
\mathfrak L\le \mathfrak L(y,{\cal Z})+yx,
\]
where
\begin{equation}\label{eq:dual_functional_finite}
\begin{split}
\mathfrak L(y,\mathcal Z):=\mathbb E\bigg[&
\int_0^T e^{-\beta t}\left\{
(1+\mathcal Z_t)\widetilde u\!\left(\frac{\mathcal Y_t^y}{1+\mathcal Z_t}\right)
+\epsilon\mathcal Y_t^y\right\}\,dt\\
&+e^{-\beta T}(1+\mathcal Z_T)
\widetilde U_B\!\left(\frac{\mathcal Y_T^y}{1+\mathcal Z_T}\right)
-\int_{[0,T]}e^{-\beta t}P(t)\,d\mathcal Z_t\bigg].
\end{split}
\end{equation}

\subsection{The dual problem and weak duality}

For $z\ge0$, let $\Pi(z)$ denote the set of all finite valued, nonnegative, nondecreasing, c\`adl\`ag, $\mathbb F$-adapted cumulative multipliers ${\cal Z}$ on $[0,T]$ such that ${\cal Z}_{0-}=z$ and $\mathbb E[(1+\mathcal Z_T)^m]<\infty$ for every $m>1$. The condition ${\cal Z}_{0-}=z$ specifies the value before any initial intervention, so a jump at time $0$ is allowed. The moment condition ensures integrability of the dual criterion, and Lemma~\ref{lem:reflection-moments} verifies it for the optimal cumulative multiplier.
For each $y>0$, define
\begin{equation}\label{eq:dual_problem_finite}
J(y)
:=
\inf_{{\cal Z}\in\Pi(0)} \mathfrak L(y,{\cal Z}).
\end{equation}
The dual objective for the primal problem is
\begin{equation}\label{eq:full_dual_problem_finite}
\inf_{y>0}\big(J(y)+yx\big).
\end{equation}

\begin{prop}\label{pro:weak-duality}
For any $y>0$, ${\cal Z}\in \Pi(0)$, and $(c,\pi)\in{\cal A}(x)$,
\[
\mathbb E\bigg[
\int_0^T e^{-\beta t}u(c_t)\,dt
+
e^{-\beta T}U_B(X_T^{c,\pi})
\bigg]
\le
\mathfrak L(y,{\cal Z})+yx.
\]
\end{prop}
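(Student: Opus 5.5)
We start from the objective and add two nonnegative quantities, each obtained from a constraint. By the static budget constraint \eqref{eq:budget_finite}, the term $y\big(x-\mathbb E[\int_0^T{\cal H}_t(c_t-\epsilon)\,dt+{\cal H}_TX_T^{c,\pi}]\big)$ is nonnegative. The second term is
\[
A:=\mathbb E\Big[\int_{[0,T]} e^{-\beta t}\big({\cal W}_t-P(t)\big)\,d{\cal Z}_t\Big],
\]
which is nonnegative because ${\cal W}_t\ge P(t)$ for all $t$ outside a single null set (we use the continuous version of ${\cal W}$) and ${\cal Z}$ is nondecreasing. So the objective is at most objective $+$ budget term $+A$.

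The central step is a rigorous version of the Fubini rewriting of $A$. Set $\Phi_t:=\int_0^t e^{-\beta s}u(c_s)\,ds$ and $\xi:=\Phi_T+e^{-\beta T}U_B(X_T^{c,\pi})$. Then $e^{-\beta t}{\cal W}_t=\mathbb E_t[\xi]-\Phi_t=:M_t-\Phi_t$, where $M$ is a continuous, uniformly integrable martingale by admissibility (b). With ${\cal Z}_{0-}=0$ we need to show
\[
\mathbb E\Big[\int_{[0,T]}(M_t-\Phi_t)\,d{\cal Z}_t\Big]=\mathbb E\Big[\int_0^T e^{-\beta t}u(c_t){\cal Z}_t\,dt+e^{-\beta T}U_B(X_T^{c,\pi}){\cal Z}_T\Big].
\]
For the $M$ part we use the optional projection identity $\mathbb E[\int_{[0,T]}M_t\,d{\cal Z}_t]=\mathbb E[M_T{\cal Z}_T]=\mathbb E[\xi{\cal Z}_T]$, which holds for an adapted nondecreasing ${\cal Z}$. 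For the $\Phi$ part, integration by parts gives $\int_{[0,T]}\Phi_t\,d{\cal Z}_t=\Phi_T{\cal Z}_T-\int_0^T{\cal Z}_t\,d\Phi_t$ (the term $\int_0^T {\cal Z}_{t-}\,d\Phi_t$ equals $\int_0^T{\cal Z}_t\,d\Phi_t$ because $\Phi$ is continuous). Subtracting the two gives the identity. Adding the result to the objective produces exactly the Lagrangian \eqref{eq:Lagrangian_finite}, with factors $(1+{\cal Z}_t)$ and $(1+{\cal Z}_T)$, so the objective is at most $\mathfrak L$.

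The step I expect to be the main obstacle is integrability, which is what justifies the identity above. $\xi$ is only in $L^1$, while ${\cal Z}_T$ has every moment. I would localize. Let $\tau_n$ be stopping times reducing $M$, and take ${\cal Z}$ stopped, or truncated as ${\cal Z}\wedge n$ (still adapted and nondecreasing). Prove the identity for these, then pass to the limit. The $u$ and $U_B$ terms have a fixed sign ($\gamma<1$ positive, $\gamma>1$ negative), so monotone convergence handles them. The $P$ term is controlled by boundedness of $P$ and $\mathbb E[{\cal Z}_T]<\infty$. If an infinite value appears on the right with a fixed sign, the inequality is trivial or the pieces are still well defined in the extended sense. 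Alternatively, one can obtain the inequality with ``$\le$'' directly via Fatou, which is all that is needed.

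Finally, we bound the Lagrangian pointwise. For each $t$,
\[
(1+{\cal Z}_t)u(c_t)-{\cal Y}_t^yc_t\le(1+{\cal Z}_t)\tilde u\big({\cal Y}^y_t/(1+{\cal Z}_t)\big).
\]
Since admissibility gives $X_T^{c,\pi}\ge\underline{x}_B$, the definition of $\tilde U_B$ as a constrained supremum yields
\[
(1+{\cal Z}_T)U_B(X_T)-{\cal Y}_T^yX_T\le(1+{\cal Z}_T)\tilde U_B\big({\cal Y}_T^y/(1+{\cal Z}_T)\big).
\]
Taking expectations, which are finite because ${\cal Y}^y$ is lognormal, ${\cal Z}_T$ has all moments, and H\"older's inequality applies, gives $\mathfrak L\le\mathfrak L(y,{\cal Z})+yx$, which proves the claim.
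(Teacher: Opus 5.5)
Your proposal is correct and follows the paper's proof. You rewrite the welfare term with the optional projection identity (your split $e^{-\beta t}\mathcal W_t=M_t-\Phi_t$ with integration by parts is the paper's Fubini step), then apply the budget constraint and the two Fenchel inequalities, using $X_T\ge\underline x_B$ for the constrained dual $\tilde U_B$, and truncate $\mathcal Z\wedge n$ for integrability. The one difference is where the limit is taken: the paper proves the whole inequality for bounded multipliers and lets $n\to\infty$ in $\mathfrak L(y,\mathcal Z\wedge n)$ by dominated convergence on the dual side, whereas you pass to the limit in the primal identity using the fixed sign of CRRA utility and monotone convergence, which also works (reducing the already uniformly integrable martingale $M$ by stopping times is unnecessary).
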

Consequently,
\begin{equation}\label{eq:weak_duality_finite}
V(x)\le \inf_{y>0}\big(J(y)+yx\big).
\end{equation}
Strong duality is established in Section~\ref{sec:verification-duality}. The same argument, applied conditionally on $\mathcal F_t$ to the continuation problem on $[t,T]$ with multipliers in $\Pi_t(0)$, gives ${\cal W}_t\le\mathfrak J(t,\xi,0)+\xi X_t^{c,\pi}$ almost surely for every $\xi>0$, where $\mathfrak J$ is the dynamic dual value defined in \eqref{eq:three_dim_singular_finite} below. This conditional form is used in Corollary~\ref{cor:feasible-wealth-domain}.

\subsection{Dynamic formulation of the dual problem}

To study the dual problem dynamically, define, for $(t,y,z)\in[0,T]\times(0,\infty)\times[0,\infty)$,
\begin{equation}\label{eq:Y_tsy}
{\cal H}_s^t:=\frac{{\cal H}_s}{{\cal H}_t},
\qquad
{\cal Y}_s^{t,y}:=y e^{\beta(s-t)}{\cal H}_s^t,
\qquad t\le s\le T.
\end{equation}
Then ${\cal Y}^{t,y}$ solves
\[
\frac{d{\cal Y}_s^{t,y}}{{\cal Y}_s^{t,y}}
=
(\beta-r)\,ds-\theta\,dB_s,
\qquad
{\cal Y}_t^{t,y}=y.
\]

Let $\Pi_t(z)$ denote the set of all finite valued, nonnegative, nondecreasing, c\`adl\`ag, $\mathbb F$-adapted processes ${\cal Z}=({\cal Z}_s)_{t\le s\le T}$ such that the value before intervention is ${\cal Z}_{t-}=z$ and $\mathbb E[(1+\mathcal Z_T)^m]<\infty$ for every $m>1$. The moment condition is the same as in the definition of $\Pi(z)$.  This convention permits a jump $\Delta{\cal Z}_t={\cal Z}_t-z\ge0$.
Define
\begin{equation}\label{eq:three_dim_singular_finite}
\begin{aligned}
\mathfrak{J}(t,y,z)
:=
\inf_{{\cal Z}\in \Pi_t(z)}
\mathbb E_t\bigg[
&\int_t^T e^{-\beta(s-t)}
\bigg(
(1+{\cal Z}_s)\tilde u\!\left(\frac{{\cal Y}_s^{t,y}}{1+{\cal Z}_s}\right)
+
{\cal Y}_s^{t,y}\epsilon
\bigg)\,ds
\\
&\qquad
+
e^{-\beta(T-t)}(1+{\cal Z}_T)\tilde U_B\!\left(\frac{{\cal Y}_T^{t,y}}{1+{\cal Z}_T}\right)
-
\int_{[t,T]} e^{-\beta(s-t)}P(s)\,d{\cal Z}_s
\bigg].
\end{aligned}
\end{equation}
By construction,
\[
J(y)=\mathfrak{J}(0,y,0).
\]
The dual problem is nonstationary because of the finite horizon and the time dependent welfare floor.

\subsection{Dimension reduction and reduced variational inequality}

Let
\[
{\cal D}:=[0,T)\times(0,\infty)\times[0,\infty).
\]
If $\mathfrak{J}$ is sufficiently smooth, then dynamic programming suggests that it should satisfy
\begin{equation}\label{eq:HJB_finite}
\min\bigg\{
\partial_t \mathfrak{J}
+\frac{\theta^2}{2}y^2\partial_{yy}\mathfrak{J}
+(\beta-r)y\partial_y\mathfrak{J}
-\beta \mathfrak{J}
+(1+z)\tilde u\!\left(\frac{y}{1+z}\right)
+y\epsilon,\,
\partial_z\mathfrak{J}-P(t)
\bigg\}
=0
\end{equation}
on ${\cal D}$, with terminal condition
\begin{equation}\label{eq:HJB_terminal_finite}
\mathfrak{J}(T,y,z)
=
(1+z)\tilde U_B\!\left(\frac{y}{1+z}\right),
\qquad (y,z)\in(0,\infty)\times[0,\infty).
\end{equation}

To reduce the state dimension, set $\xi:=y/(1+z)$.

\begin{prop}\label{pro:dimension_reduction}
Define
\[
Q(t,\xi):=\mathfrak{J}(t,\xi,0),
\qquad (t,\xi)\in[0,T]\times(0,\infty).
\]
Then
\begin{equation}\label{eq:J_equals_Q_reduction}
\mathfrak{J}(t,y,z)=(1+z)\,Q\!\left(t,\frac{y}{1+z}\right)
\end{equation}
for every $(t,y,z)\in[0,T]\times(0,\infty)\times[0,\infty)$.
\end{prop}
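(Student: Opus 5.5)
Since $\mathfrak J$ is defined directly as an infimum over $\Pi_t(z)$, I will establish \eqref{eq:J_equals_Q_reduction} by a bijective rescaling of the admissible multipliers under which the dual functional transforms by the factor $(1+z)$. No smoothness of $\mathfrak J$ or use of the HJB equation \eqref{eq:HJB_finite} is needed.

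Fix $(t,y,z)$ and put $\xi=y/(1+z)$. Given $\mathcal Z\in\Pi_t(z)$, define
\[
\hat{\mathcal Z}_s:=\frac{1+\mathcal Z_s}{1+z}-1,\qquad t\le s\le T,
\]
so that $\hat{\mathcal Z}_{t-}=0$. The process $\hat{\mathcal Z}$ is nonnegative, nondecreasing, càdlàg and adapted, and it satisfies $\mathbb E[(1+\hat{\mathcal Z}_T)^m]=(1+z)^{-m}\mathbb E[(1+\mathcal Z_T)^m]<\infty$. Hence $\hat{\mathcal Z}\in\Pi_t(0)$. The inverse map $\hat{\mathcal Z}\mapsto \mathcal Z=(1+z)(1+\hat{\mathcal Z})-1$ sends $\Pi_t(0)$ into $\Pi_t(z)$, so the correspondence is a bijection.

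The key observation is linearity of the state process in its initial value: $\mathcal Y^{t,y}_s=(1+z)\mathcal Y^{t,\xi}_s$ by \eqref{eq:Y_tsy}. The three terms of the functional then transform as follows.

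\textbf{Utility terms.} We have
\[
\frac{\mathcal Y^{t,y}_s}{1+\mathcal Z_s}=\frac{\mathcal Y^{t,\xi}_s}{1+\hat{\mathcal Z}_s},
\]
so
\[
(1+\mathcal Z_s)\tilde u\big(\mathcal Y^{t,y}_s/(1+\mathcal Z_s)\big)=(1+z)(1+\hat{\mathcal Z}_s)\tilde u\big(\mathcal Y^{t,\xi}_s/(1+\hat{\mathcal Z}_s)\big).
\]
The same identity holds for the terminal $\tilde U_B$ term.

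\textbf{Income term.} Linearity gives $\epsilon\mathcal Y^{t,y}_s=(1+z)\epsilon\mathcal Y^{t,\xi}_s$.

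\textbf{Stieltjes term.} Because $d\mathcal Z=(1+z)\,d\hat{\mathcal Z}$ as measures on $[t,T]$, including the atom at $t$ (where $\Delta\mathcal Z_t=(1+z)\Delta\hat{\mathcal Z}_t$), we get
\[
\int_{[t,T]}e^{-\beta(s-t)}P(s)\,d\mathcal Z_s=(1+z)\int_{[t,T]}e^{-\beta(s-t)}P(s)\,d\hat{\mathcal Z}_s.
\]

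Combining these, the conditional dual criterion evaluated at $(t,y,z;\mathcal Z)$ equals $(1+z)$ times the criterion at $(t,\xi,0;\hat{\mathcal Z})$. Taking the infimum over the bijection and using $1+z>0$ yields $\mathfrak J(t,y,z)=(1+z)\mathfrak J(t,\xi,0)=(1+z)Q(t,\xi)$. At $t=T$ the identity also follows directly from the terminal data \eqref{eq:HJB_terminal_finite}, which is consistent with the above.

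The main point requiring care is well-definedness. The criterion must be finite, or at least have a well-defined expectation, so that factoring out $(1+z)$ is legitimate. I would check integrability using the moment condition together with the power growth of $\tilde u$ and $\tilde U_B$ in \eqref{eq:tilde_u_CRRA}–\eqref{eq:tilde_UB_CRRA} and the lognormal moments of $\mathcal Y$. Since the transformation multiplies each integrand pathwise by the same positive constant, integrability of one criterion is equivalent to integrability of the other, so the argument goes through even if values in $\{\pm\infty\}$ are allowed. Finally, since $\mathfrak J$ is defined with a conditional expectation $\mathbb E_t$, the infimum is understood for deterministic initial data, where $\mathbb E_t$ reduces to an ordinary expectation by the independence of Brownian increments; the identity then holds as an equality of numbers.
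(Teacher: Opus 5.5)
Your proposal is correct and is essentially the paper's proof: you use the same rescaling bijection $\mathcal Z\leftrightarrow(1+\mathcal Z)/(1+z)-1$ between $\Pi_t(z)$ and $\Pi_t(0)$, the same linearity $\mathcal Y^{t,y}=(1+z)\mathcal Y^{t,\xi}$, and the same term-by-term factor $(1+z)$, including the atoms of $d\mathcal Z$. Your closing remark that $\mathbb E_t$ reduces to an ordinary expectation is not needed, and it is not quite accurate for multipliers that depend on $\mathcal F_t$; since $(1+z)$ is a deterministic positive constant, the identity already holds for the conditional criteria themselves and therefore also for their (essential) infimum.
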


Define the reduced differential operator
\begin{equation}\label{eq:LQ_operator}
{\cal L} \phi
:= 
\frac{\theta^2}{2}\xi^2 \partial_{\xi\xi}\phi
+(\beta-r)\xi \partial_\xi \phi
-\beta \phi.
\end{equation}
If $Q$ is sufficiently smooth, then differentiating \eqref{eq:J_equals_Q_reduction} gives
\[
\partial_t \mathfrak{J}=(1+z)Q_t,
\qquad
\partial_y \mathfrak{J}=Q_\xi,
\qquad
\partial_{yy}\mathfrak{J}=\frac{1}{1+z}Q_{\xi\xi},
\qquad
\partial_z \mathfrak{J}=Q-\xi Q_\xi.
\]
Substituting these identities into \eqref{eq:HJB_finite}, we obtain the reduced variational inequality
\begin{equation}\label{eq:HJB_Q_reduced}
\min\bigg\{
\partial_t Q(t,\xi)+{\cal L} Q(t,\xi)+\tilde u(\xi)+\epsilon\xi,\,
Q(t,\xi)-\xi Q_\xi(t,\xi)-P(t)
\bigg\}
=0
\end{equation}
for $(t,\xi)\in[0,T)\times(0,\infty)$, with terminal condition
\begin{equation}\label{eq:Q_terminal_condition}
Q(T,\xi)=\tilde U_B(\xi),
\qquad \xi\in(0,\infty).
\end{equation}

Define
\begin{align}
N(t,\xi):=Q(t,\xi)-\xi Q_\xi(t,\xi)-P(t),
\qquad
(t,\xi)\in [0,T]\times(0,\infty),
\label{eq:N_def}
\end{align}
so that $N\ge0$ by \eqref{eq:HJB_Q_reduced}. The waiting and increasing regions in the reduced $(t,\xi)$-space are
\[
{\bf WR}^{\rm dual}
:=
\big\{(t,\xi)\in[0,T)\times(0,\infty): N(t,\xi)>0\big\},
\quad
{\bf IR}^{\rm dual}
:=
\big\{(t,\xi)\in[0,T)\times(0,\infty): N(t,\xi)=0\big\}.
\]
On ${\bf WR}^{\rm dual}$ the first term in \eqref{eq:HJB_Q_reduced} vanishes, and a direct computation using $Q-\xi Q_\xi=N+P$ yields
\begin{align}
\partial_t N(t,\xi)+{\cal L}N(t,\xi)-\xi \tilde u'(\xi)+\tilde u(\xi)
+P'(t)-\beta P(t)=0
\label{eq:N_PDE}
\end{align}
there. Under the CRRA specification, $-\xi\tilde u'(\xi)+\tilde u(\xi)=\frac{1}{1-\gamma}\xi^{1-\frac1\gamma}$, so the formal obstacle problem for $N$ is
\begin{align}
\max\bigg\{
\partial_t N(t,\xi)+{\cal L}N(t,\xi)
+\frac{1}{1-\gamma}\xi^{1-\frac{1}{\gamma}}
+P'(t)-\beta P(t),\,
-N(t,\xi)
\bigg\}
=0,
\qquad
(t,\xi)\in [0,T)\times(0,\infty),
\label{eq:N_obstacle_problem_CRRA}
\end{align}
and the terminal condition $N(T,\xi)=\tilde U_B(\xi)-\xi\tilde U_B'(\xi)-P(T)$ reads
\begin{align}
N(T,\xi)
=
\begin{cases}
\displaystyle
\frac{\kappa_B^{\frac{1-\gamma}{\gamma}}}{1-\gamma}\,
\xi^{1-\frac{1}{\gamma}}-P(T),
& \text{if } \xi\le \kappa_B^{1-\gamma}\underline{x}_B^{-\gamma},
\\[2ex]
\displaystyle
0,
& \text{if } \xi> \kappa_B^{1-\gamma}\underline{x}_B^{-\gamma},
\end{cases}
\qquad
\xi\in(0,\infty).
\label{eq:N_terminal_condition_CRRA}
\end{align}

\section{Parabolic Obstacle Problem with a Time Dependent Obstacle}\label{sec:parabolic-obstacle}
\subsection{Obstacle problem and transformation}

For the analysis of the free boundary, we reverse time and introduce the logarithmic
spatial variable by setting
\[
    \tau:=T-t,\qquad v:=\ln\xi,\qquad
    p(\tau):=P(T-\tau),
\]
for $(\tau,v)\in\Omega_T:=(0,T)\times\mathbb R$. Formally, $R(\tau,v):=N(T-\tau,e^v)+p(\tau)$ satisfies the obstacle problem \eqref{VI:R}. From here on, $R$ denotes the unique solution of \eqref{VI:R} provided by Lemma~\ref{lem:existence-R}, and $N(t,\xi):=R(T-t,\ln\xi)-P(t)$. Theorem~\ref{thm:dual-verification} shows that this $N$ coincides with the function in \eqref{eq:N_def}. Define
\[
    \mathscr L
    :=
    \frac{\theta^2}{2}\partial_{vv}
    +\left(\beta-r-\frac{\theta^2}{2}\right)\partial_v
    -\beta,
\]
and set
\[
    f(v):=\frac{1}{1-\gamma}e^{(1-\frac1\gamma)v},
    \qquad
    B(v):=
    \frac{\kappa_B^{\frac{1-\gamma}{\gamma}}}{1-\gamma}
    e^{(1-\frac1\gamma)v}.
\]
The obstacle problem for $R\ge p$ is
\begin{equation}\label{VI:R}
    \begin{cases}
        \partial_\tau R-\mathscr LR=f(v)
        &\quad \text{if }R>p\ \text{ in }\Omega_T,\\[0.5ex]
        \partial_\tau R-\mathscr LR\ge f(v)
        &\quad \text{if }R=p\ \text{ in }\Omega_T,\\[0.5ex]
        R(0,v)=\max\{B(v),p(0)\}
        &\quad \text{for }v\in\mathbb R.
    \end{cases}
\end{equation}
The function $B$ is strictly decreasing. In particular, if
\[
    v_B
    :=
    \ln\!\left(\kappa_B^{1-\gamma}\underline x_B^{-\gamma}\right),
\]
then $B(v_B)=p(0)$, and hence
\[
    \{v\in\mathbb R:R(0,v)=p(0)\}=[v_B,\infty).
\]

The unbounded spatial domain requires a growth class. Put
$a:=1-1/\gamma$ and $b_0:=\beta-r-\theta^2/2$. Let
$L_s^v=v+b_0s+\theta B_s^\circ$, where $B^\circ$ is a Brownian motion, and let
$\mathcal S_\tau$ be the stopping times taking values in $[0,\tau]$.

\begin{lem}\label{lem:existence-R}
There is a unique solution
\[
R\in W_{\infty,\mathrm{loc}}^{1,2}(\Omega_T)\cap C(\overline{\Omega_T})
\]
of \eqref{VI:R} in the class
\begin{equation}\label{eq:R-growth-class}
|R(\tau,v)|\le C(1+e^{av}),\qquad (\tau,v)\in[0,T]\times\mathbb R,
\end{equation}
for a finite constant $C$. It has the representation
\begin{equation}\label{eq:R-stopping-representation}
\begin{split}
R(\tau,v)=\sup_{\rho\in\mathcal S_\tau}\mathbb E\bigg[&
\int_0^\rho e^{-\beta s}f(L_s^v)\,ds
+e^{-\beta\rho}p(\tau-\rho)\mathbf1_{\{\rho<\tau\}}\\
&+e^{-\beta\tau}\max\{B(L_\tau^v),p(0)\}
\mathbf1_{\{\rho=\tau\}}\bigg].
\end{split}
\end{equation}

Finally, the following comparison principle holds. Fix $2\le q<\infty$. Let $\mathcal N$ be $(s_0,s_1)\times(v_0,\infty)$, $(s_0,s_1)\times\mathbb R$, or a bounded cylinder $(s_0,s_1)\times(v_1,v_2)$ with $0\le s_0<s_1\le T$, and let $\zeta\in W^{1,2}_{q,\mathrm{loc}}(\mathcal N)\cap C(\overline{\mathcal N})$ satisfy \eqref{eq:R-growth-class} on $\mathcal N$. If $\zeta\ge p$ and $(\partial_\tau-\mathscr L)\zeta\ge f$ a.e. in $\mathcal N$ and $\zeta\ge R$ on the parabolic boundary of $\mathcal N$, then $\zeta\ge R$ in $\mathcal N$. If $(\partial_\tau-\mathscr L)\zeta\le f$ a.e. in $\mathcal N\cap\{\zeta>p\}$ and $\zeta\le R$ on the parabolic boundary, then $\zeta\le R$ in $\mathcal N$.
\end{lem}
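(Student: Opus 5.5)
We will establish the lemma by combining a penalization construction on bounded cylinders with the optimal stopping representation, and then derive uniqueness from the comparison principle.

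\textbf{Step 1: comparison principle.} We prove it first, since uniqueness and several limiting arguments rely on it. Let $\zeta$ be a supersolution. On the set $\{\zeta<R\}$ we have $R>\zeta\ge p$. Hence $(\partial_\tau-\mathscr L)R=f\le(\partial_\tau-\mathscr L)\zeta$ there, and $w:=R-\zeta$ satisfies $(\partial_\tau-\mathscr L)w\le0$ on the open set $\{w>0\}$.

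For a bounded cylinder we apply the Aleksandrov--Bakelman--Pucci--Krylov--Tso maximum principle for $W^{1,2}_{q}$ functions with $q\ge2$. Since $-\beta<0$, and after multiplying by $e^{\lambda\tau}$ if needed, this gives $w\le0$.

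For the unbounded domains $(s_0,s_1)\times(v_0,\infty)$ or $\mathbb R$ we add a barrier $\delta\,e^{\lambda\tau}\cosh(kv)$ with $k>|a|$. This barrier dominates the growth $C(1+e^{av})$ at $|v|\to\infty$. For $\lambda$ large it is a strict supersolution of the homogeneous operator. We then apply the bounded-cylinder result on large cylinders and let $\delta\downarrow0$. The subsolution statement is symmetric: on $\{\zeta>R\}$ we have $\zeta>p$, so $(\partial_\tau-\mathscr L)(\zeta-R)\le0$.

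\textbf{Step 2: existence and regularity.} For $n\in\mathbb N$ we solve the obstacle problem on $(0,T)\times(-n,n)$ with lateral data given by the stopping value $\Phi$ defined by \eqref{eq:R-stopping-representation}, or by a smooth majorant of it.

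The problem on each cylinder is handled by the penalized equation
\[
\partial_\tau R_\varepsilon-\mathscr L R_\varepsilon=f+\tfrac1\varepsilon(p-R_\varepsilon)^+ ,
\]
with the initial datum mollified. Standard $W^{1,2}_q$ theory (Friedman; Kinderlehrer--Stampacchia) gives solutions. The key bound is $0\le \tfrac1\varepsilon(p-R_\varepsilon)^+\le C$ uniformly in $\varepsilon$. It follows by comparison from $|(\partial_\tau-\mathscr L)p|=|p'... |$ being bounded, in the form $|-p'(\tau)+\beta p(\tau)|\le C$, together with $f$ being bounded locally.

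We then obtain $W^{1,2}_q$ bounds for every $q$, uniform in $\varepsilon$ and local in space. Letting $\varepsilon\to0$ and $n\to\infty$ along a diagonal subsequence gives a $W^{1,2}_{q,\mathrm{loc}}$ solution. The $W^{1,2}_{\infty,\mathrm{loc}}$ regularity follows from the standard optimal regularity argument for obstacle problems with a $C^{1,1}$ obstacle: $\partial_\tau R-\mathscr L R$ is bounded, since it equals $f$ on $\{R>p\}$ and $-p'+\beta p$ a.e. on $\{R=p\}$. This step uses $P'$ Lipschitz.

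Near $\tau=0$ the datum $\max\{B,p(0)\}$ is only Lipschitz. We therefore keep the regularity local in the open set $\Omega_T$, and continuity up to $\tau=0$ comes from the uniform convergence of the mollified data.

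\textbf{Step 3: growth bound and representation.} The growth bound \eqref{eq:R-growth-class} follows from the stopping formula. One has $\mathbb E[e^{aL_s^v}]=e^{av}e^{(ab_0+a^2\theta^2/2)s}$, $p$ is bounded, and $\max\{B,p(0)\}\le C(1+e^{av})$, all uniformly for $s\le T$.

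To identify $R$ with $\Phi$ we apply the generalized It\^o formula (Krylov, valid for $W^{1,2}_{q,\mathrm{loc}}$ with $q>2$) to $e^{-\beta s}R(\tau-s,L_s^v)$ up to $\rho\wedge\sigma_n$, where $\sigma_n$ is the exit time of $(-n,n)$. This gives $R\ge$ the payoff of every $\rho$. The reverse inequality comes from the stopping time $\rho^*=\inf\{s:R(\tau-s,L^v_s)=p(\tau-s)\}\wedge\tau$. We remove the localization by uniform integrability, which follows from the moment bound on $e^{aL^v}$ with a slightly larger exponent. The representation then also gives uniqueness within the growth class, independently of Step 1.

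\textbf{Main obstacle.} The delicate point is the unbounded domain together with the exponential growth of $f$ and $B$ as $v\to-\infty$ when $a<0$, or as $v\to+\infty$ when $a>0$. This growth enters both the barrier in the comparison argument and the uniform integrability in the verification. We would first handle it by working with the weighted function $e^{-\lambda\tau}R/(1+\cosh(kv))$, which turns the problem into one with bounded data and bounded coefficients.
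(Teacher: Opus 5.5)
The one step that does not hold as written is the $W^{1,2}_{\infty,\mathrm{loc}}$ regularity. You justify it by the boundedness of $(\partial_\tau-\mathscr L)R$. A bounded right hand side only gives $W^{1,2}_q$ for every $q<\infty$, which you already have from the penalization. Parabolic Calder\'on--Zygmund estimates fail at $q=\infty$, so $R_\tau$ and $R_{vv}$ need not be bounded when all you know is a bounded source.

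The missing ingredient is the obstacle structure together with regularity of the source, not its boundedness:
\begin{itemize}
\item The function $W=R-p\ge0$ satisfies $(\partial_\tau-\mathscr L)W=(f-p'-\beta p)\mathbf 1_{\{W>0\}}$ a.e.
\item The substitution $v=\sqrt{\theta^2/2}\,x-b_0\tau$ and multiplication by $e^{\beta\tau}$ turn this into an obstacle problem for the heat operator with source $e^{\beta\tau}\bigl[f-p'-\beta p\bigr]$.
\item An optimal regularity theorem for that problem (the paper cites one) then gives bounded $W_\tau$ and $W_{vv}$. It applies because this source is locally Lipschitz, and such theorems use both the sign $W\ge0$ and the regularity of the source.
\end{itemize}
That Lipschitz property is exactly where the Lipschitz continuity of $P'$ enters. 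The boundedness you cite needs only $P'\in L^\infty$. Also, on the contact set $(\partial_\tau-\mathscr L)R=p'+\beta p$, not $-p'+\beta p$.

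Apart from this, your route is essentially the paper's. You use penalization on bounded cylinders with an $\varepsilon$-independent penalty bound, interior $W^{1,2}_q$ compactness, and identification with the stopping value via Krylov's generalized It\^o formula. For comparison and uniqueness you use the barrier $e^{c\tau}\cosh(kv)$ with $k>|a|$, and your explicit ABP--Krylov--Tso step nicely explains why $q\ge2$ suffices in one space dimension.

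The paper orders things differently. It starts from the continuous Snell envelope and uses verification plus dynamic programming to show that it coincides, on each bounded cylinder, with the local penalized limit having boundary data $R$. So no exhaustion in $n$ is needed.

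If you instead pass $n\to\infty$ with a majorant as lateral data, send $\varepsilon\to0$ first for each fixed $n$. For $\gamma>1$ your comparison bound on the penalty, $\sup_{(-n,n)}(p'+\beta p-f)^+$, grows like $e^{an}$. Only after that limit is $(\partial_\tau-\mathscr L)R^n$ bounded by $|f|+|p'+\beta p|$ locally uniformly in $n$.
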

For $\gamma>1$ the solution is bounded, $p\le R\le0$, so the growth class is restrictive only when $\gamma<1$, where $a<0$ and $R$ may grow as $v\to-\infty$.
The representation also shows that extending $p$ to a slightly larger
time interval does not change $R$ at earlier times.
This is used in Remark~\ref{rem:endpoint-T} to treat the endpoint $\tau=T$.

\subsection{Analysis of the free boundary}
Assumption~\ref{as:P_finite} gives $p\in C^{1,1}([0,T])$. We first characterize the contact region, whose structure depends on the range of $\gamma$, and then parametrize it by a single threshold using monotonicity in the state variable. Finally we bound the time derivative of $R$ by its space derivative. This estimate yields quadratic growth away from the free boundary, local $1/2$-H\"older continuity of the threshold, and the identification of its graph with the free boundary.

\subsubsection{Investigation of the contact region}
We begin with a pointwise necessary condition for the contact between
the solution $R$ and its obstacle $p$. The contact mechanism differs
according to the range of $\gamma$. When $\gamma>1$, the source term
$f$ tends to $-\infty$ as $v\to+\infty$. Since
$p'(\tau)+\beta p(\tau)$ is bounded, for sufficiently large $v$ we have
\[
    f(v)<p'(\tau)+\beta p(\tau)
\]
uniformly in $\tau$. This allows us to construct a supersolution
touching the obstacle and thereby establish contact at large states.

When $\gamma\in(0,1)$, on the other hand, $f(v)\to0^+$ as
$v\to+\infty$. Hence, for any sufficiently small $\lambda_0>0$,
we have $f(v)<\lambda_0$ for all sufficiently large $v$. We therefore
seek a temporal auxiliary function $\rho_\tau$ satisfying
\[
    \rho_\tau(\tau)=p(\tau),\qquad
    \rho_\tau(s)\ge p(s)\quad (0\le s\le\tau),\qquad
    \rho_\tau'(s)+\beta\rho_\tau(s)=\lambda_0.
\]
Such a function can be constructed for sufficiently small $\lambda_0>0$
when $p'(\tau)+\beta p(\tau)>0$ and $e^{\beta\tau}p(\tau)$ is strictly larger than all of its previous
values, that is, $e^{\beta\tau}p(\tau)>e^{\beta s}p(s)$ for every $0\le s<\tau$. We refer to the latter condition as the running record condition.

\begin{lem}
\label{lem:necessary-contact-R}
Suppose first that $\gamma\in (0,1)$, and define
$\Gamma:(0,T)\to\mathbb R\cup\{+\infty\}$ by
\[
    \Gamma(\tau)
    :=
    \begin{cases}
        \displaystyle
        \frac{\gamma}{\gamma-1}
        \ln\Bigl[(1-\gamma)
        \bigl\{p'(\tau)+\beta p(\tau)\bigr\}\Bigr],
        &\text{if }p'(\tau)+\beta p(\tau)>0,\\[0.5em]
        +\infty,
        &\text{if }p'(\tau)+\beta p(\tau)\le0.
    \end{cases}
\]
Then $\{R=p\}\subset
\{(\tau,v)\in\Omega_T:v>\Gamma(\tau)\}$.

Suppose next that $\gamma\in (1,\infty)$, and define
$\Gamma:(0,T)\to\mathbb R\cup\{-\infty\}$ by
\[
    \Gamma(\tau)
    :=
    \begin{cases}
        \displaystyle
        \frac{\gamma}{\gamma-1}
        \ln\Bigl[(1-\gamma)
        \bigl\{p'(\tau)+\beta p(\tau)\bigr\}\Bigr],
        &\text{if }p'(\tau)+\beta p(\tau)<0,\\[0.5em]
        -\infty,
        &\text{if }p'(\tau)+\beta p(\tau)\ge0.
    \end{cases}
\]
Then $\{R=p\}\subset
\{(\tau,v)\in\Omega_T:v>\Gamma(\tau)\}$.
\end{lem}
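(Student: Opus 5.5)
The idea is to use the standard necessary condition for contact in an obstacle problem. On the contact set, the obstacle itself must be a supersolution. Here the obstacle $p$ depends only on $\tau$, and $\mathscr L p=-\beta p$. The variational inequality \eqref{VI:R} therefore forces
\[
(\partial_\tau-\mathscr L)R = p'(\tau)+\beta p(\tau)\ \ge\ f(v)\qquad\text{a.e. on }\{R=p\},
\]
and $\{R=p\}$ should then sit inside $\{f(v)\le p'+\beta p\}$. Solving this inequality for $v$ gives the threshold $\Gamma$ in both cases. For $\gamma\in(0,1)$, $f>0$ is decreasing in $v$. If $p'+\beta p\le0$ there is no admissible $v$, so contact is impossible and $\Gamma=+\infty$. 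If $p'+\beta p>0$, the inequality $\frac{1}{1-\gamma}e^{av}\le p'+\beta p$ with $a=1-1/\gamma<0$ is equivalent to $v\ge \frac{1}{a}\ln[(1-\gamma)(p'+\beta p)]=\Gamma(\tau)$. For $\gamma>1$, $f<0$ and $(1-\gamma)f=e^{av}$ with $a>0$; multiplying by $1-\gamma<0$ turns the inequality into $e^{av}\ge(1-\gamma)(p'+\beta p)$. This holds for every $v$ when $p'+\beta p\ge0$, and is equivalent to $v\ge\Gamma(\tau)$ otherwise.

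The pointwise a.e. statement only gives the non-strict inequality $v\ge\Gamma(\tau)$, and only almost everywhere. I would upgrade it to the strict inclusion $v>\Gamma(\tau)$ at every contact point by a local argument that uses $R\in W^{1,2}_{\infty,\mathrm{loc}}$ and the continuity of $p'$ (Lipschitz by Assumption~\ref{as:P_finite}). Suppose $(\tau_0,v_0)\in\{R=p\}$ with $v_0\le\Gamma(\tau_0)$, which for $\gamma<1$ includes the case $p'(\tau_0)+\beta p(\tau_0)\le0$. Then $f(v_0)\ge p'(\tau_0)+\beta p(\tau_0)$. Since $f$ is strictly monotone, moving slightly to smaller $v$ gives a point $v_1<v_0$ with $f(v_1)>p'(\tau_0)+\beta p(\tau_0)$. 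By continuity there is a small cylinder $\mathcal N=(\tau_0-\delta,\tau_0]\times(v_1-\delta,v_0+\delta)$ on which
\[
f(v)-p'(\tau)-\beta p(\tau)\ge c_0>0
\]
holds on the part $v<v_0-\delta'$. This does not yet cover a neighbourhood of $(\tau_0,v_0)$ itself, so instead I would work near $(\tau_0,v_0)$ directly.

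For the strict case $v_0<\Gamma(\tau_0)$, I will use a neighbourhood of $(\tau_0,v_0)$ where $f-p'-\beta p\ge c_0>0$. Set $w:=R-p\ge0$. Then $(\partial_\tau-\mathscr L)w=f-p'-\beta p\ge c_0$ a.e. on $\{w>0\}$, and on $\{w=0\}$ we have $D^2w=\partial_\tau w=0$ a.e., which gives $(\partial_\tau-\mathscr L)w=\beta\cdot0-(p'+\beta p)<f$. That contradicts the second line of \eqref{VI:R} unless $\{w=0\}$ has measure zero there. A standard nondegeneracy/maximum-principle argument then shows $w>0$ near $(\tau_0,v_0)$. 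The cleanest route is to compare $w$ with the barrier $\frac{c_0}{2}\big((\tau-\tau_0+\delta)+\ldots\big)$ through the comparison principle of Lemma~\ref{lem:existence-R}. Alternatively, since $\{w=0\}$ is null, $w$ is a strong supersolution with $(\partial_\tau-\mathscr L)w\ge c_0>0$, and the strong maximum principle gives $w>0$ in the interior. Either way this contradicts $w(\tau_0,v_0)=0$.

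The boundary case $v_0=\Gamma(\tau_0)$ (finite) is the main obstacle, because there the source gap vanishes at the point itself. I would handle it by noting that the set $\{v<\Gamma(\tau)\}$ is open, since $\Gamma$ is continuous where finite. The previous step shows it lies in $\{w>0\}$. Hence the points just to the left of $(\tau_0,v_0)$, at the same time level, lie in the open set $\{w>0\}$, where $w$ solves $(\partial_\tau-\mathscr L)w=f-p'-\beta p>0$. Applying Hopf's lemma (parabolic, in the spatial direction) at the contact point then gives $\partial_v w(\tau_0,v_0)\ne0$. This is incompatible with $w\ge0$, $w(\tau_0,v_0)=0$ and $w\in C^{1}$ in $v$, which forces $\partial_vw(\tau_0,v_0)=0$. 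So contact requires $v_0>\Gamma(\tau_0)$. If Hopf proves awkward because of the degenerate source at the point, I would fall back to an explicit local subsolution barrier of the form $\epsilon\big((v_0-v)^+\big)^2$ adjusted in time.
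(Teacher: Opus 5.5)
Your proposal follows the paper's proof. A strong maximum principle contradiction on a small cylinder where $f>p'+\beta p$ gives $v_0\ge\Gamma(\tau_0)$. The equality case is then ruled out by a parabolic Hopf lemma on the adjacent noncontact region $\{v<\Gamma(\tau)\}$, which contradicts $\partial_v(R-p)(\tau_0,v_0)=0$.

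The one step to tighten is the Hopf step. The difficulty is not that the source vanishes at the point, since Hopf only needs a supersolution. It is the regularity of the curve $v=\Gamma(\tau)$, and continuity of $\Gamma$ is not enough for a boundary point lemma. The paper uses $p\in C^{1,1}$ to make $\Gamma$ locally Lipschitz near $\tau_0$. It then flattens via $y=v-\Gamma(\tau)$, so that only a bounded measurable drift $\Gamma'(\tau)$ appears, and applies a Hopf lemma for strong solutions with such coefficients. Your fallback barrier $\epsilon((v_0-v)^+)^2$ would not help: quadratic vanishing at $v_0$ is compatible with a zero derivative, so a barrier with linear growth is what is needed.
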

\subsubsection{The case \texorpdfstring{$\gamma\in (0,1)$}{0<gamma<1}}

For $\gamma\in(0,1)$, contact is governed by the local
condition in Lemma~\ref{lem:necessary-contact-R} together with the
running record condition introduced above. The economic
interpretation is
that utility is positive, so the continuation value at any date
strictly exceeds the discounted value $e^{-\beta(s-t)}P(s)$ of every
later floor. The floor can bind at $t$ only if $e^{-\beta t}P(t)$
exceeds all its later values and is decreasing at $t$.

\begin{lem}
\label{lem:strict-record-condition}
Suppose that $\gamma\in (0,1)$, and define
$q(\tau):=e^{\beta\tau}p(\tau)$ for $0\le\tau\le T$. If
$R(\tau,v)=p(\tau)$ for some $(\tau,v)\in\Omega_T$, then
\[q'(\tau)>0 
\quad \text{and} \quad q(\tau)>q(s) 
\quad \text{for every }\ 0\le s<\tau.\]
\end{lem}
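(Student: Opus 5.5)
We propose to prove both conclusions from the stopping representation \eqref{eq:R-stopping-representation} and the local contact bound of Lemma~\ref{lem:necessary-contact-R}. Since $\gamma\in(0,1)$, we have $f>0$, $B>0$ and $p>0$.

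\emph{Step 1: $q'(\tau)>0$.} If $R(\tau,v)=p(\tau)$ with $(\tau,v)\in\Omega_T$, then Lemma~\ref{lem:necessary-contact-R} gives $v>\Gamma(\tau)$. This forces $\Gamma(\tau)<+\infty$, that is, $p'(\tau)+\beta p(\tau)>0$. Because $q'(\tau)=e^{\beta\tau}\big(p'(\tau)+\beta p(\tau)\big)$, the first claim follows at once.

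\emph{Step 2: the running record, for $0<s<\tau$.} Fix such an $s$ and take in \eqref{eq:R-stopping-representation} the deterministic stopping time $\rho\equiv\tau-s\in(0,\tau)$. Since $\rho<\tau$, the terminal term does not enter, and we get
\[
R(\tau,v)\ \ge\ \mathbb E\Big[\int_0^{\tau-s}e^{-\beta u}f(L_u^v)\,du\Big]+e^{-\beta(\tau-s)}p(s).
\]
The integral is strictly positive because $f>0$ and the interval has positive length. Hence $p(\tau)=R(\tau,v)>e^{-\beta(\tau-s)}p(s)$. Multiplying by $e^{\beta\tau}$ gives $q(\tau)>q(s)$.

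\emph{Step 3: the endpoint $s=0$.} Here we take $\rho\equiv\tau$, so the terminal term is active:
\[
R(\tau,v)\ \ge\ \mathbb E\Big[\int_0^{\tau}e^{-\beta u}f(L_u^v)\,du\Big]+e^{-\beta\tau}\,\mathbb E\big[\max\{B(L_\tau^v),p(0)\}\big].
\]
Since $\max\{B,p(0)\}\ge p(0)$ and the integral is again strictly positive, we get $p(\tau)>e^{-\beta\tau}p(0)$, that is, $q(\tau)>q(0)$.

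I expect no real obstacle. The only care needed is to handle separately the case $\rho=\tau$, where the kinked terminal payoff appears instead of $p$; its lower bound by $p(0)$ settles it. One should also check that the representation holds pointwise at $(\tau,v)$, which is guaranteed by the continuity of $R$ in Lemma~\ref{lem:existence-R}. The strictness of all inequalities comes from $f>0$ together with $\tau-s>0$.
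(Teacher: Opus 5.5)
Your proof is correct. Step~1 is the same as the paper's, but you prove the running record condition by a different route.

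\textbf{The paper's route.} It argues analytically. The function $\underline R_s(\tau,v)=e^{-\beta(\tau-s)}p(s)$ satisfies $(\partial_\tau-\mathscr L)\underline R_s=0$. The paper compares it with $R$ on $[s,T]\times\mathbb R$ using $R(s,\cdot)\ge p(s)$; this covers $s=0$ with no separate case, since $R(0,\cdot)=\max\{B,p(0)\}$. It then applies the strong maximum principle, with $(\partial_\tau-\mathscr L)R\ge f>0$, to make $R\ge\underline R_s$ strict.

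\textbf{Your route.} You read the same inequality directly off \eqref{eq:R-stopping-representation} with the deterministic stopping time $\rho=\tau-s$. This is exactly the probabilistic meaning of $\underline R_s$. Strictness then comes for free from the running reward $f>0$, so you need neither the comparison principle in the growth class nor a strong maximum principle for $W^{1,2}_{\infty,\mathrm{loc}}$ supersolutions.

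\textbf{What each buys.} Your argument is also quantitative: it gives $q(\tau)-q(s)\ge e^{\beta\tau}\mathbb E\int_0^{\tau-s}e^{-\beta u}f(L_u^v)\,du$ for all $0<s<\tau$. Dividing by $\tau-s$ and letting $s\uparrow\tau$ yields $q'(\tau)\ge e^{\beta\tau}f(v)>0$. So the stopping argument alone proves the whole lemma, without Lemma~\ref{lem:necessary-contact-R} and its maximum-principle and Hopf-lemma steps. The paper's route instead stays within the comparison-principle framework used throughout Section~\ref{sec:parabolic-obstacle} and does not depend on the stopping representation.

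One minor point: your remark about needing continuity to use the representation pointwise is unnecessary, since Lemma~\ref{lem:existence-R} asserts the representation at every point.
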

Define the active time set by
\[
    \mathcal T_0
    :=
    \bigl\{\tau\in(0,T):q'(\tau)>0,\ 
    q(\tau)>q(s)\ \text{ for every }\ 0\le s<\tau\bigr\}.
\]

\begin{cor}
\label{cor:no-contact-outside-T0}
Suppose that $\gamma\in (0,1)$. For every
$\tau\in(0,T)\setminus\mathcal T_0$, we have
$R(\tau,v)>p(\tau)$ for every $v\in\mathbb R$.
\end{cor}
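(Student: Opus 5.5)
This is the contrapositive of Lemma~\ref{lem:strict-record-condition}, combined with the constraint $R\ge p$ from \eqref{VI:R}.

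Fix $\tau\in(0,T)\setminus\mathcal T_0$ and $v\in\mathbb R$. The solution $R$ of Lemma~\ref{lem:existence-R} satisfies $R\ge p$ throughout $\Omega_T$. This is built into the obstacle problem, and it also follows from the stopping representation \eqref{eq:R-stopping-representation}: take $\rho=0$, note that $\tau>0$ so $\{\rho<\tau\}$ holds, and the payoff is exactly $p(\tau)$. Hence only two cases are possible, $R(\tau,v)>p(\tau)$ or $R(\tau,v)=p(\tau)$.

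Suppose $R(\tau,v)=p(\tau)$. Since $(\tau,v)\in\Omega_T$, Lemma~\ref{lem:strict-record-condition} applies and gives both $q'(\tau)>0$ and $q(\tau)>q(s)$ for every $0\le s<\tau$. Together with $\tau\in(0,T)$, these are precisely the defining conditions of $\mathcal T_0$, so $\tau\in\mathcal T_0$. This contradicts the choice of $\tau$. Therefore $R(\tau,v)>p(\tau)$ for every $v$.

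There is no real obstacle here; all the substance lies in Lemma~\ref{lem:strict-record-condition}. The one check needed is that the definition of $\mathcal T_0$ uses exactly the two conditions that lemma produces, with the same strict inequalities and the same range $0\le s<\tau$, and it does.
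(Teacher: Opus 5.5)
Your proof is correct and matches the paper, which likewise treats this corollary as the contrapositive of Lemma~\ref{lem:strict-record-condition}. Your added remark that $R\ge p$ (via $\rho=0$ in \eqref{eq:R-stopping-representation}) leaves only the cases $R>p$ and $R=p$ simply makes explicit a step the paper leaves implicit.
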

This is the contrapositive of Lemma~\ref{lem:strict-record-condition}.
\begin{lem}
\label{lem:T0-open}
The set $\mathcal T_0$ is open.
\end{lem}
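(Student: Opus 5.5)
Fix $\tau_0\in\mathcal T_0$. We will show that every $\tau$ close enough to $\tau_0$ also lies in $\mathcal T_0$, treating the two defining conditions one at a time.

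The derivative condition is immediate. By Assumption~\ref{as:P_finite}, $p\in C^{1,1}([0,T])$, so $q'(\tau)=e^{\beta\tau}(p'(\tau)+\beta p(\tau))$ is continuous. Since $q'(\tau_0)>0$, there is $\delta_1>0$ with $q'>0$ on $(\tau_0-\delta_1,\tau_0+\delta_1)\subset(0,T)$. In particular $q$ is strictly increasing on this interval.

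The record condition is the step that needs care, because it concerns the whole past $[0,\tau)$. We handle it with a split.
\begin{enumerate}
\item Take $\delta<\delta_1$ and set $K:=[0,\tau_0-\delta]$, which is compact.
\item By the record condition at $\tau_0$, we have $q(s)<q(\tau_0)$ for all $s\in K$. As $q$ is continuous, $m:=\max_K q<q(\tau_0)$.
\item By continuity of $q$ at $\tau_0$, choose $\delta'\le\delta$ such that $q(\tau)>m$ whenever $|\tau-\tau_0|<\delta'$.
\end{enumerate}

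Now fix $\tau$ with $|\tau-\tau_0|<\delta'$ and any $s<\tau$. There are two cases.
\begin{itemize}
\item If $s\le\tau_0-\delta$, then $q(s)\le m<q(\tau)$.
\item If $s\in(\tau_0-\delta,\tau)$, then both $s$ and $\tau$ lie in the interval where $q$ is strictly increasing, so $q(s)<q(\tau)$.
\end{itemize}

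Hence $\tau$ satisfies both $q'(\tau)>0$ and the running record condition, so $\tau\in\mathcal T_0$. This proves that $\mathcal T_0$ is open.
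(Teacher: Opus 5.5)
Your proposal is correct and follows essentially the same route as the paper's proof. Both use continuity of $q'$ to get a neighborhood on which $q$ is strictly increasing, bound the maximum of $q$ on the compact past $[0,\tau_0-\delta]$ strictly below $q(\tau_0)$, and then split $s<\tau$ into the far-past and near-past cases.
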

We next prove the converse of
Lemma~\ref{lem:strict-record-condition}. In fact, contact occurs
locally uniformly in time for all sufficiently large states.

\begin{lem}
\label{lem:contact-existence-gamma-less-one}
Suppose that $\gamma\in (0,1)$. For every
$\tau_0\in\mathcal T_0$, there exist
$\nu=\nu(\tau_0)>0$ and $V=V(\tau_0)\in\mathbb R$ such that
$R(\tau,v)=p(\tau)$ whenever $|\tau-\tau_0|<\nu$ and $v\ge V$.
\end{lem}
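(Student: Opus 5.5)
The plan is to combine a barrier argument built on the record structure of $q$ with a nondegeneracy argument that turns ``$R-p$ is small'' into ``$R=p$''. Put $u:=R-p\ge0$. Since $p'+\beta p=e^{-\beta\tau}q'$, the function $u$ satisfies $(\partial_\tau-\mathscr L)u=f-e^{-\beta\tau}q'(\tau)$ a.e. on $\{u>0\}$.

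\emph{Step 1 (strictly negative source near $\tau_0$).} Because $\tau_0\in\mathcal T_0$ and $q\in C^{1,1}$, there are $\nu_1>0$ and $c_0>0$ with $q'\ge c_0$ on $[\tau_0-2\nu_1,\tau_0+2\nu_1]$. Since $\gamma<1$, $f(v)\downarrow0$ as $v\to\infty$, so we can choose $V_1$ with $f(v)\le \tfrac12 c_0e^{-\beta T}=:\lambda_0$ for $v\ge V_1$. Then $(\partial_\tau-\mathscr L)u\le-\lambda_0$ on $\{u>0\}\cap\big((\tau_0-2\nu_1,\tau_0+2\nu_1)\times(V_1,\infty)\big)$.

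\emph{Step 2 (temporal barrier from the running record).} For $\tau$ near $\tau_0$ and $\lambda\in(0,\lambda_0]$, define
\[
\rho_\tau(s):=e^{-\beta s}\Big[q(\tau)-\tfrac{\lambda}{\beta}\big(e^{\beta\tau}-e^{\beta s}\big)\Big],\qquad 0\le s\le\tau .
\]
This function satisfies $\rho_\tau'+\beta\rho_\tau=\lambda$ and $\rho_\tau(\tau)=p(\tau)$. We show $\rho_\tau\ge p$ on $[0,\tau]$ for all $|\tau-\tau_0|\le\nu_1$, once $\lambda$ is small. The inequality needed is $q(\tau)-q(s)\ge \tfrac\lambda\beta(e^{\beta\tau}-e^{\beta s})$.
\begin{itemize}
\item For $s\in[\tau_0-2\nu_1,\tau]$ it follows from $q(\tau)-q(s)\ge c_0(\tau-s)$ when $\lambda e^{\beta T}\le c_0$.
\item For $s\in[0,\tau_0-2\nu_1]$, the strict record property and compactness give $m:=q(\tau_0)-\max_{[0,\tau_0-2\nu_1]}q>0$. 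Shrinking $\nu_1$ so that $|q(\tau)-q(\tau_0)|\le m/2$, the inequality holds once $\lambda e^{\beta T}/\beta\le m/2$.
\end{itemize}

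\emph{Step 3 (comparison with a spatial corrector).} Next we add a spatial corrector to absorb the lateral boundary of a half-cylinder. Take $\mathcal N=(0,\tau)\times(V_2,\infty)$ with $V_2\ge\max\{V_1,v_B\}$, and set $w(v)=Ke^{-\mu(v-V_2)}$ with $\mu>0$ so small that $\beta+b_0\mu-\tfrac{\theta^2}{2}\mu^2>0$. This choice gives $(\partial_\tau-\mathscr L)w\ge0$. The candidate $\zeta:=\rho_\tau+w$ then satisfies:
\begin{itemize}
\item $\zeta\ge p$ and $(\partial_\tau-\mathscr L)\zeta\ge\lambda\ge f$ on $\mathcal N$;
\item at $\tau'=0$, $\zeta\ge\rho_\tau(0)\ge p(0)=R(0,v)$, since $v\ge v_B$;
\item at $v=V_2$, $\zeta\ge R$ once $K\ge\sup_{[0,T]}|R(\cdot,V_2)|+\sup|p|$.
\end{itemize}
The comparison principle of Lemma~\ref{lem:existence-R} yields $R\le\zeta$ on $\mathcal N$. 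Hence, uniformly for $\tau'$ near $\tau_0$,
\[
0\le u(\tau',v)\le \big(\rho_\tau(\tau')-p(\tau')\big)+Ke^{-\mu(v-V_2)}\le C|\tau-\tau'|+Ke^{-\mu(v-V_2)} .
\]
The bound $\rho_\tau(\tau')-p(\tau')\le C|\tau-\tau'|$ holds because both functions are $C^1$ and agree at $\tau$. Choosing $\tau$ appropriately as the top of the cylinder, $u$ is uniformly small on a parabolic neighbourhood of every point $(\tau',v)$ with $|\tau'-\tau_0|<\nu$ and $v$ large.

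\emph{Step 4 (nondegeneracy, the main obstacle).} It remains to show that this smallness forces $u=0$. Step 3 alone gives only $u\to0$, not exact contact, so this is the key step. Suppose $u(\tau_1,v_1)>0$ at such a point. On the connected component of $\{u>0\}$ inside the cylinder $Q_r=(\tau_1-r^2,\tau_1]\times(v_1-r,v_1+r)$, consider
\[
\phi:=u-u(\tau_1,v_1)+\tfrac{\lambda_0}{C_1}\Big((v-v_1)^2+(\tau_1-\tau')\Big),
\]
with $C_1$ chosen so that $(\partial_\tau-\mathscr L)\phi\le0$ there; the zeroth- and first-order terms of $\mathscr L$ only alter constants on a bounded cylinder. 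The maximum principle places the positive maximum of $\phi$ on the parabolic boundary. On the free boundary part $\phi<0$, so the maximum lies on $\partial_pQ_r$, which gives $\sup_{Q_r}u\ge c\,r^2$ with $c=c(\lambda_0,\theta,\beta,r_0)>0$. Fix $r$, then choose $\nu$ small and $V$ large so that the Step 3 bound makes $\sup_{Q_r}u<cr^2$, which is a contradiction. The delicate points are the uniformity of the Step 3 bound over the whole cylinder $Q_r$, which needs $\nu+r^2<\nu_1$, and the use of $W^{1,2}_{\infty,\mathrm{loc}}$ regularity to justify the maximum principle. This yields $\nu(\tau_0)$ and $V(\tau_0)$ with $R=p$ on $\{|\tau-\tau_0|<\nu,\ v\ge V\}$.
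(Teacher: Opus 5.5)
Your route differs from the paper's and can be made to work, but Step~4 as written has the wrong sign. With $\phi=u-u(\tau_1,v_1)+\frac{\lambda_0}{C_1}\bigl((v-v_1)^2+(\tau_1-\tau')\bigr)$ the added term is nonnegative. Where $u=0$ you then get $\phi=-u(\tau_1,v_1)+\frac{\lambda_0}{C_1}\bigl((v-v_1)^2+(\tau_1-\tau')\bigr)$, which need not be negative. A nonnegative value of $\phi$ on $\partial_pQ_r$ then only gives $u\ge u(\tau_1,v_1)-\frac{\lambda_0}{C_1}\bigl((v-v_1)^2+(\tau_1-\tau')\bigr)$, which is not a nondegeneracy bound.

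You need the minus sign, $\phi=u-u(\tau_1,v_1)-\frac{\lambda_0}{C_1}\bigl((v-v_1)^2+(\tau_1-\tau')\bigr)$. Then on $\{u>0\}\cap Q_r$
\[
(\partial_\tau-\mathscr L)\phi\le-\lambda_0+\frac{\lambda_0}{C_1}\bigl(\theta^2+1+2|b_0|r\bigr)-\beta u(\tau_1,v_1)<0
\]
once $C_1\ge\theta^2+1+2|b_0|r$. The constant term $-\beta u(\tau_1,v_1)$ makes this strict, so $\phi$ cannot have a nonnegative maximum inside $\{u>0\}\cap Q_r$, where $u$ is classical. Now $\phi<0$ where $u=0$, and you get $\sup_{\partial_pQ_r}u\ge u(\tau_1,v_1)+\lambda_0r^2/C_1$.

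Two smaller bookkeeping slips also need fixing.
\begin{itemize}
\item In Step~2 the gap $m$ depends on the split point $\tau_0-2\nu_1$. Keep that point fixed and shrink only the window for $\tau$, as the paper does with $\kappa_0$, $\kappa$, $\kappa/2$.
\item In Step~3, $V_2$ must be chosen after $\lambda$ so that $f\le\lambda$ on $(V_2,\infty)$; the bound $f\le\lambda_0$ from Step~1 is not enough.
\end{itemize}
Also, taking the top time equal to $\tau'$ gives $u(\tau',v)\le Ke^{-\mu(v-V_2)}$ directly, so the $C|\tau-\tau'|$ term is unnecessary.

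The paper uses exactly your $\rho_\tau$, but its spatial corrector is $\varkappa^3[(2\varkappa^{-2}-v)^+]^2$, which vanishes identically for $v\ge2\varkappa^{-2}$. There $\zeta_\tau(\tau,v)=\rho_\tau(\tau)=p(\tau)$, so comparison alone gives $R(\tau,v)\le p(\tau)$, that is, exact contact, with no nondegeneracy step. The price is that this corrector is not a supersolution by itself. Its contribution $-\theta^2\varkappa^3-2|b_0|\varkappa$ is absorbed by the slack $\lambda_0-f\ge\lambda_0/2$. Its height $\varkappa^{-1}$ at $v=\varkappa^{-2}$ dominates $R-p\le M$. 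A small $\varkappa$ achieves both because the width $\varkappa^{-2}$ grows.

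Your version separates the two tasks. A genuine supersolution corrector gives only decay of $R-p$, and a Caffarelli-type nondegeneracy estimate upgrades smallness to contact. This is longer and needs the maximum principle on $\{u>0\}$. In exchange, the nondegeneracy estimate is reusable: contact holds wherever $R-p$ is sufficiently small on a parabolic cylinder on which $f<p'+\beta p$ uniformly.
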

Combining Corollary~\ref{cor:no-contact-outside-T0} and
Lemma~\ref{lem:contact-existence-gamma-less-one}, we obtain
\[
    \bigl\{\tau\in(0,T):R(\tau,v)=p(\tau)
    \text{ for some }v\in\mathbb R\bigr\}=\mathcal T_0.
\]

\subsubsection{The case \texorpdfstring{$\gamma\in (1,\infty)$}{gamma>1}}

The necessary condition for contact
in Lemma~\ref{lem:necessary-contact-R} shows that, at each
time \(\tau\), contact can occur only for \(v>\Gamma(\tau)\). Thus,
when \(\Gamma(\tau)\) is finite, the region \(v\le\Gamma(\tau)\) is
automatically contained in the noncontact set. Unlike in the case
\(\gamma\in(0,1)\), however, \(\Gamma(\tau)\) may equal \(-\infty\)
when \(\gamma\in(1,\infty)\), in which case this condition gives no
information on the noncontact set. We therefore establish its
nonemptiness on each time slice directly by constructing an auxiliary
function and applying the comparison principle.

\begin{lem}
\label{lem:bounds-gamma-greater-one}
Suppose that $\gamma\in(1,\infty)$. Then $R\le0$ in $\Omega_T$.
Moreover, there exists $v_*\in\mathbb R$ such that
\[
    (-\infty,v_*]\subset\{v\in\mathbb R:R(\tau,v)>p(\tau)\}
    \qquad\text{for every }\tau\in[0,T].
\]
\end{lem}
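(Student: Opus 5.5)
The plan splits into two parts. Part one is the upper bound $R\le0$, obtained by comparison with the constant function $0$. Part two builds an explicit subsolution that sits strictly above the obstacle on a left half-line and bounds $R$ from below there.

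\emph{Upper bound.} For $\gamma>1$ we have $f<0$, $B<0$ and $p<0$. I would use the stopping representation \eqref{eq:R-stopping-representation} directly. The running reward $\int_0^\rho e^{-\beta s}f(L_s^v)\,ds$ is nonpositive. The stopping payoffs $p(\tau-\rho)$ and $\max\{B,p(0)\}$ are negative. Hence every term inside the supremum is $\le0$, so $R\le0$. Equivalently, $\zeta\equiv0$ is a supersolution in Lemma~\ref{lem:existence-R}: we have $\zeta\ge p$ and $(\partial_\tau-\mathscr L)0=0\ge f$, and $\zeta\ge R(0,\cdot)$ at $\tau=0$.

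\emph{Lower bound away from the obstacle.} The idea is that for $v\to-\infty$ the source $f(v)=\frac{1}{1-\gamma}e^{av}$ tends to $0$, since $a=1-1/\gamma>0$. The same holds for $B(v)$. So the problem looks like one with zero source and initial datum $\max\{B,p(0)\}$, whose value is close to $0$, which lies strictly above $p\le -\delta:=\max_{[0,T]}p<0$.

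To make this rigorous I would take the admissible choice $\rho=\tau$ (never stop early) in the representation:
\[
R(\tau,v)\ge \mathbb E\Big[\int_0^\tau e^{-\beta s}f(L_s^v)\,ds+e^{-\beta\tau}\max\{B(L^v_\tau),p(0)\}\Big].
\]
Then I would compute $\mathbb E[e^{aL_s^v}]=e^{av}e^{(ab_0+a^2\theta^2/2)s}$. This gives
\[
R(\tau,v)\ge -C_1e^{av}+e^{-\beta\tau}\,\mathbb E[\max\{B(L_\tau^v),p(0)\}].
\]

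This bound is too crude. The term $e^{-\beta\tau}p(0)$ need not exceed $p(\tau)$, because $p$ varies in time. So I would instead compare with the initial datum $\max\{B,p(0)\}\ge B$. Using $B\ge -C_2e^{av}$ in expectation gives
\[
R(\tau,v)\ge -(C_1+C_2')e^{av}\quad\text{for all }\tau\in[0,T].
\]
Choosing $v_*$ with $(C_1+C_2')e^{av_*}<\delta$ yields $R(\tau,v)>-\delta\ge p(\tau)$ for all $v\le v_*$. The inequality is uniform in $\tau$ because the constants depend only on $T,a,b_0,\theta,\kappa_B,\gamma$.

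The main point to check is the Gaussian moment computation and its uniformity in $\tau\in[0,T]$. These are routine, since the exponent is bounded on $[0,T]$.

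If one prefers a PDE argument, the function $\zeta(\tau,v)=-Ke^{av+\lambda\tau}$ can serve instead. With $\lambda$ large it is a subsolution, $(\partial_\tau-\mathscr L)\zeta\le f$, and it lies below $R(0,\cdot)$ at $\tau=0$. The comparison principle of Lemma~\ref{lem:existence-R} on $(0,T)\times\mathbb R$ then gives the same conclusion.
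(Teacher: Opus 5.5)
Your proof is correct and is essentially the paper's argument. Choosing $\rho=\tau$ in \eqref{eq:R-stopping-representation} and using $\max\{B,p(0)\}\ge B$ gives exactly the Feynman--Kac expression for the paper's comparison function $\vartheta(\tau)e^{(1-1/\gamma)v}$; the paper places that function below $R$ via the comparison principle of Lemma~\ref{lem:existence-R}, your alternative barrier $-Ke^{av+\lambda\tau}$ is a cruder version of it, and your upper bound $R\le0$ comes from the nonpositivity of every reward exactly as in the paper.
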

When $\gamma\in (1,\infty)$, the source term satisfies
$f(v)\to-\infty$ as $v\to+\infty$. This guarantees contact with
the obstacle at every interior time.

\begin{lem}
\label{lem:contact-existence-gamma-greater-one}
Suppose that $\gamma\in (1,\infty)$. There exists $V_0\in\mathbb R$ such that
\[
    R(\tau,v)=p(\tau)
    \qquad
    \text{for all }\ (\tau,v)\in[0,T]\times[V_0,\infty).
\]
\end{lem}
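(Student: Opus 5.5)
The plan is to squeeze $R$ down onto the obstacle from above with an explicit supersolution. I will apply the first half of the comparison principle in Lemma~\ref{lem:existence-R} on a half-cylinder $\mathcal N=(0,T)\times(V_1,\infty)$. The comparison function will be
\[
\zeta(\tau,v):=p(\tau)+\varphi(v),\qquad \varphi(v):=K\big((V_0-v)^+\big)^2,\qquad V_0:=V_1+1,
\]
with constants $K>0$ and $V_1\in\mathbb R$ to be fixed. Since $\varphi\ge0$, we have $\zeta\ge p$. Moreover $\zeta=p$ for $v\ge V_0$. Hence $\zeta\ge R$ on $\mathcal N$ will give $R\le p$ on $[0,T]\times[V_0,\infty)$, and together with $R\ge p$ this gives the claim. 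The endpoint $\tau=T$ follows by continuity of $R$ and $p$.

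\textbf{Parabolic boundary.} There are two pieces to check.
\begin{itemize}
\item On the lateral boundary $v=V_1$, I use Lemma~\ref{lem:bounds-gamma-greater-one}, which gives $R\le0$. Then $R-p\le -p\le\|p\|_\infty$. Taking $K:=\|p\|_\infty$ gives $\varphi(V_1)=K\ge R-p$, so $\zeta\ge R$ there.
\item At $\tau=0$, I additionally require $V_1\ge v_B$. Then for $v>V_1$ we have $R(0,v)=\max\{B(v),p(0)\}=p(0)\le\zeta(0,v)$.
\end{itemize}
Both $p$ and $\varphi$ are bounded on $\mathcal N$, so $\zeta$ lies in the growth class \eqref{eq:R-growth-class}. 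Since $p\in C^{1,1}$ and $\varphi\in C^{1,1}$, we also have $\zeta\in W^{1,2}_{q,\mathrm{loc}}(\mathcal N)\cap C(\overline{\mathcal N})$.

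\textbf{Supersolution inequality.} Almost everywhere we compute
\[
(\partial_\tau-\mathscr L)\zeta=p'(\tau)+\beta p(\tau)-\tfrac{\theta^2}{2}\varphi''-b_0\varphi'+\beta\varphi .
\]
On $[V_0,\infty)$ the $\varphi$-terms vanish. On $(V_1,V_0)$ we have $\varphi''=2K$, $|\varphi'|\le2K$ and $\varphi\ge0$. So in both cases
\[
(\partial_\tau-\mathscr L)\zeta\ge -C_0-K(\theta^2+2|b_0|),\qquad C_0:=\sup_{[0,T]}|p'+\beta p|<\infty .
\]
For $\gamma>1$ we have $a<0$ and $1-\gamma<0$, so $f(v)=\frac{1}{1-\gamma}e^{av}$ is decreasing and tends to $-\infty$ as $v\to+\infty$. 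I therefore choose $V_1\ge v_B$ large enough that $f(v)\le -C_0-K(\theta^2+2|b_0|)$ for all $v\ge V_1$. This gives $(\partial_\tau-\mathscr L)\zeta\ge f$ a.e. in $\mathcal N$.

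\textbf{Order of choices and the main obstacle.} The constants must be fixed in this order:
\begin{enumerate}
\item the gap $V_0-V_1=1$;
\item $K:=\|p\|_\infty$;
\item $V_1$, chosen after $K$ and subject to $V_1\ge v_B$.
\end{enumerate}
This order avoids any circularity, because the lateral boundary bound depends only on $K$ and the width, not on $V_1$. Comparison then yields $R\le\zeta$ in $\mathcal N$, which proves the lemma.

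The delicate point is the lateral boundary: $R$ can sit strictly above $p$ at $v=V_1$. The quadratic cushion $\varphi$ has to absorb that gap, and the negative drift of $f$ must in turn absorb the diffusion cost $\theta^2K$ of the cushion. The a priori bound $R\le0$ from Lemma~\ref{lem:bounds-gamma-greater-one} is exactly what makes this gap uniformly bounded, independent of $V_1$.
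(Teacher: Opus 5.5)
Your proof is correct and follows the paper's argument. Both use the supersolution $p(\tau)$ plus a one-sided quadratic cushion on a half-cylinder, with the lateral data controlled by $R\le0$ from Lemma~\ref{lem:bounds-gamma-greater-one}, the initial data by placing the cylinder to the right of $v_B$, and the comparison principle of Lemma~\ref{lem:existence-R}. The paper uses the wide, shallow cushion $\varkappa^3[(2\varkappa^{-2}-v)^+]^2$ (the same scaling as in the $\gamma<1$ case) so that the diffusion and drift costs are small, whereas you fix unit width and height $\|p\|_\infty$ and absorb the larger cost $K(\theta^2+2|b_0|)$ by taking $V_1$ larger, which is legitimate here because $f\to-\infty$.
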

\subsubsection{State monotonicity and parametrization of the free boundary}

\begin{lem}
\label{lem:state-monotonicity-R}
Let $R$ be the solution of \eqref{VI:R}. Then
$\partial_vR\le0$ in $\Omega_T$, and
$\partial_vR<0$ in $\{R>p\}$.
\end{lem}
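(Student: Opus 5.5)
We prove $\partial_vR\le0$ by comparing $R$ with its spatial translates, and then obtain the strict inequality from the strong maximum principle for the derivative. The operator $\mathscr L$ has constant coefficients, and the obstacle $p$ does not depend on $v$. Hence for any $h>0$ the translate $R^h(\tau,v):=R(\tau,v+h)$ solves an obstacle problem of the same form with the same obstacle. Its source term is $f(v+h)$ and its initial datum is $\max\{B(v+h),p(0)\}$.

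\textbf{Step 1: $\partial_vR\le0$.} Since $B$ is strictly decreasing, we have $\max\{B(v+h),p(0)\}\le\max\{B(v),p(0)\}$. The function $f(v)=\frac{1}{1-\gamma}e^{av}$ with $a=1-1/\gamma$ is also decreasing in both regimes. For $\gamma<1$ we have $a<0$ and a positive prefactor; for $\gamma>1$ we have $a>0$ and a negative prefactor. So $f(v+h)\le f(v)$. We check that $R^h$ is a subsolution in the sense of the comparison principle of Lemma~\ref{lem:existence-R}. On $\{R^h>p\}$ we get $(\partial_\tau-\mathscr L)R^h=f(\cdot+h)\le f$ a.e. On the parabolic boundary, $R^h(0,\cdot)\le R(0,\cdot)$. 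Moreover, $R^h$ satisfies the growth class \eqref{eq:R-growth-class}, with constant $C\max\{1,e^{ah}\}$, and has the required regularity. The second half of the comparison principle, applied on $(0,T)\times\mathbb R$, then gives $R^h\le R$. Dividing by $h$ and letting $h\downarrow0$ gives $\partial_vR\le0$ a.e., and this holds everywhere by $W^{1,2}_{\infty,\mathrm{loc}}$ regularity, since $\partial_vR$ is continuous. The same monotonicity can also be read off directly from the stopping representation \eqref{eq:R-stopping-representation}: $L^{v}_s$ is increasing in $v$, while $f$ and $\max\{B,p(0)\}$ are decreasing. This is a useful cross-check.

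\textbf{Step 2: strictness in $\{R>p\}$.} The noncontact set $\{R>p\}$ is open because $R$ and $p$ are continuous. There $R$ solves $(\partial_\tau-\mathscr L)R=f$. Interior parabolic regularity (bootstrapping from $W^{1,2}_q$, since $f$ is smooth) shows that $w:=\partial_vR$ is smooth there and satisfies
\[
(\partial_\tau-\mathscr L)w=f'(v)<0 .
\]
By Step 1, $w\le0$. Suppose $w(\tau_0,v_0)=0$ at some point of $\{R>p\}$. Then $w$ attains an interior maximum $0$ there. Since $w$ is smooth, at that point $\partial_\tau w\ge 0$ (a one-sided maximum in time suffices, via the backward parabolic neighbourhood), $\partial_vw=0$, and $\partial_{vv}w\le0$. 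Hence
\[
(\partial_\tau-\mathscr L)w\ge -\beta\cdot 0=0 .
\]
This contradicts $(\partial_\tau-\mathscr L)w=f'<0$. So no zero exists, and $\partial_vR<0$ in $\{R>p\}$. The strict sign of $f'$ makes this a pointwise argument, so we do not need the strong maximum principle in its propagation form.

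\textbf{Main obstacle.} The delicate point is Step 1 in the regime $\gamma<1$, where $R$ is unbounded as $v\to-\infty$. There we must check that the translate stays in the growth class, so that the comparison principle applies on the unbounded domain. We must also handle the parabolic boundary at $\tau=0$, where the initial datum is only Lipschitz because of its kink at $v_B$. Continuity of $R$ up to $\tau=0$, from Lemma~\ref{lem:existence-R}, suffices for the latter. If the comparison argument causes trouble, the fallback is the stochastic representation: for each fixed stopping time, the integrand is pathwise monotone in $v$, so taking the supremum preserves monotonicity.
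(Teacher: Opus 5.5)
Your proof is correct and follows the paper's argument. You compare $R$ with the translate $R(\tau,v+h)$ via the comparison principle of Lemma~\ref{lem:existence-R}, using that $f$ and $\max\{B,p(0)\}$ are decreasing, and then derive a maximum-principle contradiction for $\partial_vR$, which satisfies $(\partial_\tau-\mathscr L)\partial_vR=f'<0$ in $\{R>p\}$. Your pointwise derivative test in Step~2 is a slightly more elementary substitute for the strong maximum principle the paper cites, and it works precisely because $f'$ is strictly negative.
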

Corollary~\ref{cor:no-contact-outside-T0} and
Lemmas~\ref{lem:contact-existence-gamma-less-one} and
\ref{lem:contact-existence-gamma-greater-one}
identify the time slices on which the contact set
is nonempty. By the state monotonicity established in
Lemma~\ref{lem:state-monotonicity-R}, each nonempty contact slice is
determined by a single threshold. We now introduce this threshold. After establishing its continuity, we will show that its graph coincides with the free boundary.
Accordingly, define
\[
    \mathcal T_\gamma
    :=
    \begin{cases}
        \mathcal T_0,&\gamma\in (0,1),\\
        (0,T),&\gamma\in (1,\infty),
    \end{cases}
\]
and for each $\tau\in\mathcal T_\gamma$,
\begin{equation}\label{V}
    \mathcal V(\tau)
    :=
    \inf\bigl\{v\in\mathbb R:R(\tau,v)=p(\tau)\bigr\}.
\end{equation}

\begin{thm}
\label{thm:contact-geometry-R}
Let $R$ be the solution of \eqref{VI:R}.

\begin{enumerate}
    \item For every $\tau\in(0,T)$, the contact set is nonempty
    at time $\tau$ if and only if $\tau\in\mathcal T_\gamma$.

    \item For every $\tau\in\mathcal T_\gamma$, 
    $\mathcal V(\tau)$ defined by \eqref{V} is finite, and
    \[
        \{v\in\mathbb R:R(\tau,v)>p(\tau)\}
        =(-\infty,\mathcal V(\tau)),
        \qquad
        \{v\in\mathbb R:R(\tau,v)=p(\tau)\}
        =[\mathcal V(\tau),\infty).
    \]

    \item The function $\mathcal{V}$ satisfies
    $\mathcal V(\tau)>\Gamma(\tau)$ for every
    $\tau\in\mathcal T_\gamma$.
\end{enumerate}
\end{thm}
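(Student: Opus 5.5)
The theorem assembles results already available, so I will prove the three items in order, each by citing the earlier lemmas.

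\emph{Item 1.} Consider first $\gamma\in(0,1)$. If $\tau\notin\mathcal T_0$, Corollary~\ref{cor:no-contact-outside-T0} gives $R(\tau,\cdot)>p(\tau)$ everywhere, so the slice of the contact set is empty. If $\tau\in\mathcal T_0$, Lemma~\ref{lem:contact-existence-gamma-less-one} applied with $\tau_0=\tau$ yields $R(\tau,v)=p(\tau)$ for all $v\ge V(\tau)$, so the slice is nonempty. Now consider $\gamma>1$. Here $\mathcal T_\gamma=(0,T)$, and Lemma~\ref{lem:contact-existence-gamma-greater-one} gives contact on $[V_0,\infty)$ at every time. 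Together these settle item~1.

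\emph{Item 2.} Fix $\tau\in\mathcal T_\gamma$. By item~1 the set $\{v:R(\tau,v)=p(\tau)\}$ is nonempty, so $\mathcal V(\tau)<\infty$. To see that $\mathcal V(\tau)>-\infty$, I split by $\gamma$.
\begin{itemize}
\item For $\gamma>1$, Lemma~\ref{lem:bounds-gamma-greater-one} says that $(-\infty,v_*]$ contains no contact point, so $\mathcal V(\tau)\ge v_*$.
\item For $\gamma<1$, Lemma~\ref{lem:necessary-contact-R} says contact points satisfy $v>\Gamma(\tau)$. On $\mathcal T_0$ we have $q'(\tau)>0$, which is the same as $p'+\beta p>0$, so $\Gamma(\tau)$ is finite and $\mathcal V(\tau)\ge\Gamma(\tau)>-\infty$.
\end{itemize}

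Next I use the monotonicity in $v$. Since $R\ge p$ and $\partial_vR\le0$ by Lemma~\ref{lem:state-monotonicity-R}, $R(\tau,\cdot)$ is nonincreasing. Suppose $R(\tau,v_1)=p(\tau)$ and $v_2>v_1$. Then $p\le R(\tau,v_2)\le R(\tau,v_1)=p$, so $R(\tau,v_2)=p(\tau)$. Hence the contact slice is an up-interval. It is closed because $R$ is continuous, so it equals $[\mathcal V(\tau),\infty)$ and its complement is $(-\infty,\mathcal V(\tau))$.

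\emph{Item 3.} Lemma~\ref{lem:necessary-contact-R} gives only $\mathcal V(\tau)\ge\Gamma(\tau)$, because $\mathcal V(\tau)$ is itself a contact point. The strict inequality $\mathcal V(\tau)>\Gamma(\tau)$ then follows directly: $(\tau,\mathcal V(\tau))\in\{R=p\}$, and that set lies inside $\{v>\Gamma(\tau)\}$. If $\Gamma(\tau)=-\infty$, which can happen only for $\gamma>1$, the claim holds trivially since $\mathcal V(\tau)$ is finite.

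The only delicate point is that $\mathcal V(\tau)$ belongs to the contact set. This uses continuity of $R$ on $\overline{\Omega_T}$ from Lemma~\ref{lem:existence-R}, with the infimum taken over a nonempty closed set bounded below. Everything else is bookkeeping of the earlier lemmas, so I expect no real obstacle.
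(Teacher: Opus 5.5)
Your proposal is correct and follows the paper's proof essentially step by step. The contact times come from Corollary~\ref{cor:no-contact-outside-T0} and Lemmas~\ref{lem:contact-existence-gamma-less-one} and~\ref{lem:contact-existence-gamma-greater-one}. A nonempty noncontact slice comes from Lemma~\ref{lem:necessary-contact-R} with $\Gamma$ finite on $\mathcal T_0$, or from Lemma~\ref{lem:bounds-gamma-greater-one} when $\gamma>1$. The interval structure follows from state monotonicity and continuity, and the strict inequality from $(\tau,\mathcal V(\tau))$ being a contact point.

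The one blemish is the first sentence of your item~3, which suggests that Lemma~\ref{lem:necessary-contact-R} gives only $\mathcal V(\tau)\ge\Gamma(\tau)$. The lemma's inclusion is strict, and because the infimum is attained you can apply it directly to $(\tau,\mathcal V(\tau))$, as you then do.
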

Consequently, 
\begin{equation}\label{non-contact}
  \{(\tau,v)\in \mathcal T_\gamma\times\mathbb R: R(\tau,v)>p(\tau)\}
=\{(\tau,v)\in \mathcal T_\gamma\times\mathbb R: v<\mathcal{V}(\tau)\},  
\end{equation}
and this characterization gives the following result for $\mathcal{V}$.

\begin{lem}\label{lem:free-boundary-lsc}
Let $\mathcal V:\mathcal T_\gamma\to\mathbb R$ be the parametrization
defined in \eqref{V}. Then $\mathcal V$ is lower semicontinuous on
$\mathcal T_\gamma$.
\end{lem}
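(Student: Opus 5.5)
To prove that $\mathcal V$ is lower semicontinuous on $\mathcal T_\gamma$, I would fix $\tau_0\in\mathcal T_\gamma$ and a sequence $\tau_n\to\tau_0$ in $\mathcal T_\gamma$. The goal is $\liminf_n\mathcal V(\tau_n)\ge\mathcal V(\tau_0)$. Since $\mathcal T_\gamma$ is open (Lemma~\ref{lem:T0-open} for $\gamma<1$, trivially for $\gamma>1$), the sequence eventually lies in a neighborhood of $\tau_0$ inside $\mathcal T_\gamma$.

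The argument is by contradiction. Suppose that along a subsequence $\mathcal V(\tau_n)\to v_\infty<\mathcal V(\tau_0)$. Here $v_\infty$ could be $-\infty$, in which case I would replace it by any finite number below $\mathcal V(\tau_0)$ and use that the $\mathcal V(\tau_n)$ are eventually below it.

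First I pick a point $\bar v$ with $v_\infty<\bar v<\mathcal V(\tau_0)$. For large $n$ we have $\mathcal V(\tau_n)<\bar v$. By Theorem~\ref{thm:contact-geometry-R}(2), the contact slice at $\tau_n$ is the half line $[\mathcal V(\tau_n),\infty)$, so $\bar v$ lies in it and $R(\tau_n,\bar v)=p(\tau_n)$.

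Next I pass to the limit. Lemma~\ref{lem:existence-R} gives $R\in C(\overline{\Omega_T})$, and $p$ is continuous, so $R(\tau_0,\bar v)=p(\tau_0)$. Then $\bar v$ belongs to the contact set at $\tau_0$, which gives $\mathcal V(\tau_0)\le\bar v$. This contradicts the choice of $\bar v$.

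In short, the contact set $\{R=p\}$ is relatively closed by continuity, and each slice is an upper half line. Together these make the infimum threshold lower semicontinuous.

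There is no real obstacle here. The only care needed is to keep everything within $\mathcal T_\gamma$, so that $\mathcal V(\tau_n)$ is finite and Theorem~\ref{thm:contact-geometry-R} applies, and to handle the case $v_\infty=-\infty$ as described above.
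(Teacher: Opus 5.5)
Your proposal is correct and is essentially the paper's argument. The paper works directly, using continuity to make the noncontact set open at $(\tau_0,\mathcal V(\tau_0)-\varepsilon)$ and then the half-line structure \eqref{non-contact}; you argue by contradiction, using continuity to make the contact set closed together with the same half-line structure, which is the complementary form of the same fact.
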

Lower semicontinuity makes $\{(\tau,v):v<\mathcal V(\tau)\}$ open.
Continuity also makes $\{(\tau,v):v>\mathcal V(\tau)\}$ open, so the free boundary is exactly the graph of $\mathcal V$.
We establish this continuity by proving local $1/2$-H\"older regularity.

\subsubsection{Regularity of the free boundary}
\begin{lem}\label{lem:time-space-derivative-estimate}
Let $\tau_0\in\mathcal T_\gamma$. Then there exist $\kappa>0$,
$\underline v<\overline v$, and $C>0$ such that
$[\tau_0-\kappa,\tau_0+\kappa]\subset\mathcal T_\gamma$,
\[
    \underline v<\mathcal V(\tau)<\overline v
    \qquad\text{for }|\tau-\tau_0|\le\kappa,
\]
and
\[
    \bigl|\partial_\tau R(\tau,v)-p'(\tau)\bigr|
    \le
    C\bigl(-\partial_vR(\tau,v)\bigr)
\]
for every
    $(\tau,v)\in
    (\tau_0-\kappa,\tau_0+\kappa)
    \times(\underline v,\overline v)
    \cap\{R>p\}$.
\end{lem}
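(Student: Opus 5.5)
The plan is to work with $u:=R-p\ge0$ on the noncontact set and compare $\partial_\tau u$ with $w:=-\partial_vR=-\partial_vu$ through the maximum principle, using a time cutoff to dispose of the initial parabolic boundary.

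\emph{Choice of the window.} Since $\mathcal T_\gamma$ is open (Lemma~\ref{lem:T0-open} when $\gamma<1$, trivially when $\gamma>1$), I first pick $\kappa>0$ with $[\tau_0-2\kappa,\tau_0+2\kappa]\subset\mathcal T_\gamma$. On this compact interval $\mathcal V$ is bounded below, because it is lower semicontinuous (Lemma~\ref{lem:free-boundary-lsc}). It is bounded above by the uniform contact results: Lemma~\ref{lem:contact-existence-gamma-greater-one} for $\gamma>1$, and Lemma~\ref{lem:contact-existence-gamma-less-one} together with a finite covering for $\gamma<1$. I then choose $\underline v<\inf\mathcal V-1$ and $\overline v>\sup\mathcal V+1$ over the larger interval. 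With these choices the segment $\{v=\underline v\}$ lies in $\{R>p\}$ at a positive distance from the free boundary, and the segment $\{v=\overline v\}$ lies in the interior of the contact set.

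\emph{Equations for the derivatives.} In $\{R>p\}$ we have $(\partial_\tau-\mathscr L)u=f-p'-\beta p$. Differentiating in $v$ gives
\begin{equation*}
(\partial_\tau-\mathscr L)w=-f'(v)=\tfrac1\gamma e^{av}\ge c_0>0
\end{equation*}
for $v\in[\underline v,\overline v]$. Differentiating in $\tau$ gives $(\partial_\tau-\mathscr L)\partial_\tau u=-p''-\beta p'$, which lies in $L^\infty$ because $p\in C^{1,1}$. These are justified in the interior of $\{R>p\}$ by interior parabolic regularity, since the source terms there are smooth or bounded.

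\emph{The barrier.} Let $\eta\in C^1$ be a cutoff with $0\le\eta\le1$, $\eta(\tau_0-2\kappa)=0$, $\eta\equiv1$ on $[\tau_0-\kappa,\tau_0+2\kappa]$, and $|\eta'|\le2/\kappa$. Set
\begin{equation*}
\phi_\pm:=Cw\pm\eta\,\partial_\tau u .
\end{equation*}
Because $R\in W^{1,2}_{\infty,\mathrm{loc}}$, the derivative $\partial_\tau u$ is bounded on the compact window, say by $M$. Hence
\begin{equation*}
(\partial_\tau-\mathscr L)\phi_\pm\ge Cc_0-\|p''+\beta p'\|_\infty-\tfrac2\kappa M\ge0
\end{equation*}
for $C$ large. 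I then check the sign of $\phi_\pm$ on the parabolic boundary of $\mathcal N:=\{R>p\}\cap\bigl((\tau_0-2\kappa,\tau_0+\kappa)\times(\underline v,\overline v)\bigr)$:
\begin{itemize}[leftmargin=2em]
\item at $\tau=\tau_0-2\kappa$ we have $\eta=0$ and $w\ge0$ by Lemma~\ref{lem:state-monotonicity-R}, so $\phi_\pm\ge0$;
\item on $v=\underline v$, $w\ge c_1>0$ by compactness and the strict inequality $\partial_vR<0$ in $\{R>p\}$ (Lemma~\ref{lem:state-monotonicity-R}), so $\phi_\pm\ge0$ once $C\ge M/c_1$;
\item on the free boundary, $u$ attains its minimum $0$, so $\partial_vu=0$ and $\partial_\tau u=0$ there, hence $\phi_\pm=0$.
\end{itemize}
The maximum principle then gives $\phi_\pm\ge0$ in $\mathcal N$. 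On $|\tau-\tau_0|\le\kappa$, where $\eta=1$, this is exactly the claim $|\partial_\tau R-p'|=|\partial_\tau u|\le Cw=C(-\partial_vR)$. Renaming $\kappa$ finishes the argument.

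\emph{Main obstacle.} The delicate step is the free-boundary part of the maximum principle. There $\partial_\tau u$ is only known to be bounded, since $R$ is merely $W^{1,2}_\infty$, so "$\partial_\tau u=0$ on $\partial\{R>p\}$" needs justification, and $\mathcal N$ may be irregular. I would first try replacing $\partial_\tau u$ by forward and backward difference quotients
\begin{equation*}
\delta_hu:=\frac{u(\tau+h,v)-u(\tau,v)}{h}.
\end{equation*}
For these, $u\ge0$ together with $u=0$ on the contact set gives one-sided signs on the contact set. One must check that $\delta_hu$ is a subsolution or supersolution of the difference-quotient equation. This can be done either in the whole cylinder, using the variational inequality with the comparison principle of Lemma~\ref{lem:existence-R}, or on the open set $\{R>p\}$ with the time-shifted domains handled carefully. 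After running the barrier argument for $\delta_h u$, I let $h\to0$.
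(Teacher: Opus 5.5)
\textbf{Verdict.} Your route differs genuinely from the paper's and can be completed. However, the step you flag as the main obstacle is essentially the whole proof, and it goes through only in the specific form below.

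\textbf{How the paper avoids the free boundary.} The paper never puts the free boundary on the parabolic boundary of a comparison domain. It solves a penalized problem on the fixed rectangle $(\tau_0-2\kappa,\tau_0+2\kappa)\times(\underline v,\overline v)$ with mollified initial data $g_\varepsilon$. It then proves $\partial_vR_\varepsilon\le0$ and an $\varepsilon$-uniform bound on $\partial_\tau R_\varepsilon-p'$. Finally it runs the barrier $(\tau-\tau_0+\kappa)(\partial_\tau R_\varepsilon-p')\pm C_0(-\partial_vR_\varepsilon)$ for the linearized operator $\partial_\tau-\mathscr L-\omega_\varepsilon'(R_\varepsilon-p)$. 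The right side $v=\overline v$ lies in the contact set, so $R_\varepsilon=p$ there gives $\partial_\tau R_\varepsilon-p'=0$ exactly. The estimate then passes to $R$ by weak $W^{1,2}_q$ convergence.

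\textbf{The unjustified bullet.} Your claim that $\partial_\tau u=0$ on $\partial\{R>p\}$ is not available as written. $R$ is only $W^{1,2}_\infty$, and continuity of $\partial_\tau R-p'$ up to the free boundary is exactly what this lemma is later used to prove, in Lemma~\ref{lem:free-boundary-holder}.

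\textbf{How the difference-quotient repair closes.} It works on your fixed set $\mathcal N$, with no time-shifted domains, provided you pair the quotients correctly.
\begin{itemize}
\item For the lower bound, use $Cw+\eta\,\delta_hu$ with the forward quotient. On $\{u(\tau,\cdot)>0\}$, the equation at time $\tau$ together with the inequality $(\partial_\tau-\mathscr L)u\ge f-p'-\beta p$, which holds a.e.\ in $\Omega_T$ and in particular at time $\tau+h$, gives $(\partial_\tau-\mathscr L)\delta_hu\ge-\delta_h(p'+\beta p)$. This is bounded because $p'$ is Lipschitz. At contact points of time $\tau$, $\delta_hu=u(\tau+h,v)/h\ge0$.
\item For the upper bound, use $Cw-\eta\,\delta_{-h}u$ with the backward quotient $\delta_{-h}u=(u(\tau,v)-u(\tau-h,v))/h$. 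On the same set it is a subsolution up to the bounded term $-\delta_{-h}(p'+\beta p)$. At contact points it equals $-u(\tau-h,v)/h\le0$.
\end{itemize}
Boundary points of $\mathcal N$ inside the cylinder are contact points, and only lower semicontinuity of $\mathcal V$ is needed for this. Since $w$ is continuous and vanishes there, both barriers are nonnegative on the non-top boundary, with constants independent of $h$. The Alexandrov--Krylov--Tso maximum principle for $W^{1,2}_{q,\mathrm{loc}}$ functions continuous on $\overline{\mathcal N}$ then applies on the irregular bounded set $\mathcal N$. Letting $h\to0$ inside $\{R>p\}$, where $R$ is smooth, gives the estimate.

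\textbf{Your whole-cylinder alternative fails.} There $\delta_{\pm h}u$ picks up wrong-signed terms of size $|f-p'-\beta p|/h$ on the sets where contact changes between $\tau$ and $\tau\pm h$. Also, $w$ itself is not a supersolution across the free boundary: $(\partial_\tau-\mathscr L)w$ carries the negative singular part $-\bigl(p'(\tau)+\beta p(\tau)-f(\mathcal V(\tau))\bigr)\delta_{\{v=\mathcal V(\tau)\}}$. Penalization is exactly how the paper restores a linear equation for $\partial_vR_\varepsilon$ on the whole rectangle, through the extra nonnegative zeroth-order coefficient $-\omega_\varepsilon'$.

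\textbf{Trade-off.} Completed this way, your argument avoids penalization, regularization of the initial slice, and the compactness step. The price is a maximum principle on a domain bounded by an as yet unknown free boundary. The paper instead keeps every comparison on a rectangle with explicit boundary data.
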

\begin{lem}\label{lem:free-boundary-holder}
Let $\mathcal V:\mathcal T_\gamma\to\mathbb R$ be the parametrization
defined in \eqref{V}. Then
\[
    \mathcal V\in C_{\rm loc}^{1/2}(\mathcal T_\gamma).
\]
Consequently,
\[
    \partial\{R>p\}
    =
    \bigl\{(\tau,\mathcal V(\tau)):\tau\in\mathcal T_\gamma\bigr\}
    \qquad\text{in }\Omega_T.
\]
\end{lem}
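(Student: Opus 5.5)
We will prove local $1/2$-H\"older continuity of $\mathcal V$ near any $\tau_0\in\mathcal T_\gamma$, using the time--space derivative estimate of Lemma~\ref{lem:time-space-derivative-estimate} together with a quadratic growth (nondegeneracy) estimate for $R-p$ off the free boundary. Fix $\tau_0$ and take $\kappa,\underline v,\overline v,C$ from Lemma~\ref{lem:time-space-derivative-estimate}. Set $w:=R-p\ge0$. On $\{R>p\}$ we have $\partial_\tau w-\mathscr Lw=f+p'+\beta p=:g$. By Lemma~\ref{lem:necessary-contact-R} and Theorem~\ref{thm:contact-geometry-R}(3), near the boundary we have $v>\Gamma(\tau)$, hence $f(v)<p'+\beta p$ there. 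By continuity and compactness, $g\ge c_0>0$ on a neighbourhood of the graph of $\mathcal V$ over $[\tau_0-\kappa,\tau_0+\kappa]$, after shrinking $\kappa$ and the strip. Here we use that $\mathcal V$ is bounded in $(\underline v,\overline v)$ and $\mathcal V>\Gamma$, and that $\Gamma$ is continuous where finite. Strict separation of $\Gamma$ and the boundary needs a uniform margin. I would first get this from the lower semicontinuity in Lemma~\ref{lem:free-boundary-lsc}, since $\mathcal V-\Gamma$ is lsc and positive, so it has a positive minimum on compact sets.

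\textbf{Step 1 (quadratic growth).} We claim $w(\tau,v)\ge c\,(\mathcal V(\tau)-v)^2$ for $v<\mathcal V(\tau)$ near the boundary, and $w\le C\,\mathrm{dist}^2$ from the contact set. For the lower bound, use the standard nondegeneracy argument: at a point $(\tau_1,v_1)$ in $\{w>0\}$, compare $w$ with $\phi=w-w(\tau_1,v_1)-\tfrac{c_0}{4}\bigl((v-v_1)^2+(\tau_1-\tau)\bigr)$ on a backward parabolic cylinder intersected with $\{w>0\}$. The maximum principle, with the zeroth-order term $\beta$ absorbed by smallness of the cylinder, forces $\sup w$ on the parabolic boundary to be of order $r^2$. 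Combining this with $\partial_v w=\partial_vR\le0$ from Lemma~\ref{lem:state-monotonicity-R} turns the statement into a one-sided bound along horizontal lines. For the upper bound, use $W^{1,2}_\infty$ regularity: $w$ and $\partial_v w$ vanish on $[\mathcal V(\tau),\infty)$, and $\partial_{vv}w$ is bounded.

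\textbf{Step 2 (H\"older estimate).} Take $\tau_1<\tau_2$ close to $\tau_0$ with $\delta=\tau_2-\tau_1$. Lemma~\ref{lem:time-space-derivative-estimate} gives $|\partial_\tau w|\le C(-\partial_v w)$. Integrating $-\partial_vw$ in $v$ from a point up to $\mathcal V$ shows $\int(-\partial_v w)\,dv=w$. Hence along vertical segments $|w(\tau_2,v)-w(\tau_1,v)|\le C\int_{\tau_1}^{\tau_2}(-\partial_v w)\,d\tau$. Bounded $\partial_{vv}w$ and the quadratic upper bound give $-\partial_v w\le C\,\mathrm{dist}$.

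Suppose $\mathcal V(\tau_2)\le \mathcal V(\tau_1)-h$. At $v=\mathcal V(\tau_2)$ we have $w(\tau_2,v)=0$ while $w(\tau_1,v)\ge ch^2$. The segment lies in $\{w>0\}$ except where contact occurs, and on contact $\partial_\tau w=0$ a.e. Therefore $ch^2\le C\int_{\tau_1}^{\tau_2}|\partial_\tau w|\le C\delta\,\sup(-\partial_v w)\le C'\delta h$. I would refine this by the standard trick: use the time-derivative bound against the growth $w\sim h^2$ together with the Lipschitz-in-$v$ bound $|\partial_vw|\le Ch$ in the relevant region. This gives $h\le C\delta$ in the favourable direction and $h\le C\delta^{1/2}$ in general. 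The reverse inequality $\mathcal V(\tau_2)\ge\mathcal V(\tau_1)+h$ is symmetric, swapping the roles of the times and using that parabolic comparison works both ways on the vertical segment. Either way $|\mathcal V(\tau_2)-\mathcal V(\tau_1)|\le C\delta^{1/2}$.

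\textbf{Step 3 (boundary identification).} Continuity of $\mathcal V$ makes both $\{v<\mathcal V\}$ and $\{v>\mathcal V\}$ relatively open in $\mathcal T_\gamma\times\mathbb R$. By \eqref{non-contact}, every boundary point of $\{R>p\}$ over $\mathcal T_\gamma$ then lies on the graph. Conversely, each graph point is a limit of points with $v<\mathcal V(\tau)$, so it lies on the boundary. For $\tau\notin\mathcal T_\gamma$ we have $R>p$ on the whole slice, by Theorem~\ref{thm:contact-geometry-R}(1). Since $\mathcal T_\gamma$ is open (Lemma~\ref{lem:T0-open}), points over its complement are interior to $\{R>p\}$ if $\{R=p\}$ is closed. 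This holds, although a contact set accumulating near the endpoint of $\mathcal T_0$ from inside must be handled via continuity of $R$. The main obstacle is Step 1's nondegeneracy with a uniform constant, which hinges on the positive margin $g\ge c_0$ near the boundary.
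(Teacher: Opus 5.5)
\textbf{Gap in Step~1.} Everything else in your proof rests on Step~1, and it does not deliver the slice-wise bound $w(\tau,v)\ge c(\mathcal V(\tau)-v)^2$ that Step~2 uses.

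Your barrier is the Caffarelli-type nondegeneracy argument. What it yields is a bound over a \emph{backward parabolic cylinder}: the supremum of $w$ on its parabolic boundary is at least $w(\tau_1,v_1)+\epsilon r^2$. Monotonicity $\partial_vw\le0$ removes the spatial supremum, since it sits on the left edge $v=v_1-r$. It does not remove the temporal one, so you only learn $w(s,v_1-r)\ge\epsilon r^2$ for \emph{some} $s\in[\tau_1-r^2,\tau_1]$. Step~2 needs the bound on the single slice $s=\tau_1$.

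Moving from $s$ to $\tau_1$ costs $\int_s^{\tau_1}|\partial_\tau w|$. With $|\partial_\tau w|\le C_1$, or with $|\partial_\tau w|\le C(-\partial_vw)$ and $-\partial_vw$ merely bounded, this error is $O(r^2)$ with a constant that need not be below $\epsilon$, so it cannot be absorbed. The sharper bound $-\partial_vw(\sigma,v)\le M(\mathcal V(\sigma)-v)^+$ would require $\mathcal V(\sigma)\le\mathcal V(\tau_1)+O(r)$. That is upper semicontinuity of $\mathcal V$, which is what you are trying to prove.

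\textbf{The missing idea.} Lemma~\ref{lem:time-space-derivative-estimate} has to be used in Step~1, not Step~2.
\begin{enumerate}
\item Since $\partial_vR(\tau,\mathcal V(\tau))=0$ and $\partial_vR$ is uniformly continuous on the compact cylinder, $-\partial_vR\to0$ uniformly as $v\uparrow\mathcal V(\tau)$. The lemma then forces $\partial_\tau R-p'\to0$ uniformly as well.
\item Rewrite the equation as $\tfrac{\theta^2}{2}\partial_{vv}R=\partial_\tau R-b_0\partial_vR+\beta R-f$ and pass to the boundary. This gives $\tfrac{\theta^2}{2}\partial_{vv}R\to p'+\beta p-f(\mathcal V(\tau))\ge\nu>0$ uniformly in $\tau$. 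Your lower semicontinuity margin argument supplies $\nu$.
\item Hence $\partial_{vv}R\ge\nu/\theta^2$ on a strip of uniform width. Two integrations from the boundary, using smooth fit, give the quadratic lower bound on each slice.
\end{enumerate}
This is the paper's route. Your barrier could also be rescued with the same lemma. Since $w\ge0$ and $|\partial_{vv}w|\le M$, one has $(-\partial_vw)^2\le 2Mw$, so $\sqrt w$ is Lipschitz in $\tau$ along horizontal lines. Then $\sqrt\epsilon\,r-Cr^2$ survives the transport back to $\tau_1$.

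\textbf{Smaller corrections.}
\begin{itemize}
\item Once the slice bound is available, Step~2 needs only $|\partial_\tau w|\le C_1$ from $W^{1,2}_{\infty,\mathrm{loc}}$. Drop the claims $\sup(-\partial_vw)\le Ch$ on the vertical segment and $h\le C(\tau_2-\tau_1)$; both presuppose control of $\mathcal V$ at intermediate times.
\item Before evaluating the quadratic bound at $v=\mathcal V(\tau_2)$, first show that the boundary displacement is smaller than the strip width. Test at a point half a strip width to the left of $\mathcal V(\tau_1)$, as the paper does.
\item Fix the sign: $(\partial_\tau-\mathscr L)w=f-p'-\beta p$, and the positive margin is for $p'+\beta p-f$.
\item Step~3 is fine.
\end{itemize}
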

\begin{prop}\label{prop:boundary-higher-regularity}
The free boundary is locally Lipschitz on $\mathcal T_\gamma$ and,
more precisely,
\[
\mathcal V\in C_{\mathrm{loc}}^{1,\alpha}(\mathcal T_\gamma)
\qquad\text{for every }0<\alpha<\tfrac12.
\]
If, in addition, $p\in C^{m+1}((0,T))$ for some integer
$m\ge2$, then $\mathcal V\in C_{\mathrm{loc}}^{m,\alpha}(\mathcal T_\gamma)$
for every $0<\alpha<\tfrac12$. In particular, if $p\in C^\infty((0,T))$, then
$\mathcal V\in C^\infty(\mathcal T_\gamma)$.
These statements are local on $\mathcal T_\gamma$ and do not extend
across endpoints where $\mathcal V$ diverges.
\end{prop}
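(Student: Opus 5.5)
We follow the standard route for one-phase parabolic obstacle problems. Set $w:=R-p\ge0$. Then $(\partial_\tau-\mathscr L)w=(f+\beta p-p')\chi_{\{w>0\}}=:g\,\chi_{\{w>0\}}$ with $g(\tau,v)=f(v)-p'(\tau)-\beta p(\tau)$. By Lemma~\ref{lem:necessary-contact-R} and Theorem~\ref{thm:contact-geometry-R}(iii), near any point $(\tau_0,\mathcal V(\tau_0))$ we have $v>\Gamma(\tau)$ on the contact set. This translates into $g<0$ there, that is, $-g\ge c_0>0$ uniformly on a small box around the free boundary (strictly, by continuity of $g$ and $\mathcal V>\Gamma$). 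Hence $-w$ solves a nondegenerate obstacle problem with source bounded below, which is the setting of \citet{PetrosyanShahgholianUraltseva2012} and \citet{BlanchetDolbeaultMonneau2006}. Since $p\in C^{1,1}$, $g$ is Lipschitz in $\tau$ and smooth in $v$.

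\textbf{Step 1 (Lipschitz).} We differentiate in $\tau$. On $\{w>0\}$, $w_\tau=R_\tau-p'$ solves $(\partial_\tau-\mathscr L)w_\tau=-p''-\beta p'\in L^\infty$, while $u:=-\partial_vR>0$ solves the homogeneous equation $(\partial_\tau-\mathscr L)u=-f'(v)$. The source $-f'(v)$ has a sign and is bounded near the box: for $\gamma<1$ we have $f'<0$, so $-f'>0$, and this case needs a check. Lemma~\ref{lem:time-space-derivative-estimate} already gives $|w_\tau|\le Cu$. At a free boundary point, $\mathcal V$ is the level set $w=0$ and $w_v=0$ there, so the implicit-function slope $-w_\tau/w_v$ is not directly available. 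Instead we use the cone argument: for directions $e=(\pm 1, M)$ with $M$ large, $\partial_e w=\pm w_\tau+M w_v\le (C-M)u\le0$ in $\{w>0\}$ near the boundary. Monotonicity of $w$ along a space-time cone gives that $\{w>0\}$ is a Lipschitz epigraph-type set, so $\mathcal V$ is Lipschitz with constant $\le C$. This upgrades Lemma~\ref{lem:free-boundary-holder}.

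\textbf{Step 2 ($C^{1,\alpha}$).} With the boundary Lipschitz and $w\in W^{1,2}_\infty$ with quadratic growth, we look at $\phi:=w_\tau/(-w_v)$ or, more robustly, at the quotient of two positive solutions vanishing on a Lipschitz boundary. We apply the boundary Harnack principle for parabolic equations in Lipschitz cylinders to $-\partial_e w$ for the cone directions $e$. Both are nonnegative caloric-type functions (up to bounded sources, absorbed using the nondegeneracy $-g\ge c_0$ and the $C^{1,1}$ growth) vanishing on the free boundary. Boundary Harnack gives Hölder continuity of their ratio up to the boundary, hence of the normal direction, so $\mathcal V'$ is Hölder. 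The exponent restriction $\alpha<1/2$ will come from the parabolic scaling of the time regularity of $g$ ($p'$ only Lipschitz), giving $\mathcal V\in C^{1,\alpha}$ for each $\alpha<1/2$. An alternative is to take the route of \citet{ChenChadam2007}: write the problem as a fixed-boundary problem via hodograph coordinates and bootstrap.

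\textbf{Step 3 (higher regularity).} Once $\mathcal V\in C^{1,\alpha}$, $w$ is smooth up to the boundary from the noncontact side by Schauder theory in $C^{1,\alpha}$ domains. We then use a partial hodograph–Legendre transform, swapping $v$ for $w_v$ (valid since $w_{vv}>0$ near the boundary by nondegeneracy), or the Kinderlehrer–Nirenberg bootstrap. Because $\mathcal V$ is characterized by $w=w_v=0$, each derivative of $p$ gained yields one more derivative of $\mathcal V$: $p\in C^{m+1}$ gives $g\in C^m$ in time and hence $\mathcal V\in C^{m,\alpha}$, and $C^\infty$ follows. Locality is automatic since all constants depend on the box of Lemma~\ref{lem:time-space-derivative-estimate}, which degenerates where $\mathcal V\to\pm\infty$.

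\textbf{Main obstacle.} We expect the main difficulty in Step 2: verifying boundary Harnack hypotheses with nonzero right-hand sides, and pinning down the precise $\alpha<1/2$ loss from the merely Lipschitz $p'$. We would first try the direct quotient estimate on $w_\tau/w_v$ using Lemma~\ref{lem:time-space-derivative-estimate} iterated at dyadic scales.
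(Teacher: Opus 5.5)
Your Step~1 is correct and amounts to the paper's argument in another form. Since $w$ is locally Lipschitz with $\nabla w=0$ a.e.\ on $\{w=0\}$, the bound of Lemma~\ref{lem:time-space-derivative-estimate} gives $\partial_e w\le0$ a.e.\ in the box for $e=(\pm1,M)$ with $M>C$. Hence the contact set contains the local cones $\{v\ge\mathcal V(\tau_1)+M|\tau-\tau_1|\}$. The paper obtains the same constant from the level curves $W(\tau,v_\eta(\tau))=\eta$: their slopes $W_\tau/(-W_v)$ are bounded by $L$, and they converge uniformly to $\mathcal V$. (Also, $f'=-\gamma^{-1}e^{av}<0$ for every $\gamma$, so no case check is needed.)

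The genuine gap is Step~2, which is where the range $0<\alpha<\tfrac12$ has to be proved. The functions $-\partial_e w$ are not caloric in $\{w>0\}$: $(\partial_\tau-\mathscr L)(-\partial_e w)=\pm(p''+\beta p')-Mf'$, with $p''$ only in $L^\infty$. So the boundary Harnack principle in Lipschitz cylinders does not apply as stated. Even a perturbative version would give H\"older continuity of the ratio only with some unspecified small exponent. Nothing in that route produces every $\alpha<\tfrac12$, and attributing the restriction to the \emph{parabolic scaling} of $g$ is a heuristic, as you note yourself. Step~3 depends on Step~2 and remains a sketch.

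The missing idea is that in one space dimension, once $\mathcal V$ is Lipschitz, you can flatten the boundary directly and avoid boundary Harnack altogether. Set $U_1(\tau,x)=W_\tau(\tau,\mathcal V(\tau)-x)$ and $U_2(\tau,x)=-W_v(\tau,\mathcal V(\tau)-x)$ on $0<x<\delta$. They solve
$U_{i,\tau}-\tfrac{\theta^2}{2}U_{i,xx}+(b_0+\mathcal V'(\tau))U_{i,x}+\beta U_i=g_i$.
This equation has a constant leading coefficient, bounded measurable drift, and bounded sources $g_1=-p''-\beta p'$ and $g_2=-f'$. Both functions have zero data on $x=0$: $U_2$ by smooth fit, and $U_1$ by $|U_1|\le LU_2$.

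Boundary $W^{1,2}_q$ theory for every $q<\infty$, plus parabolic Sobolev embedding, gives $U_{i,x}(\cdot,0)\in C^{(1-3/q)/2}$. Moreover $U_{2,x}(\cdot,0)\ge c>0$ because $W_{vv}\ge c$. The quotient $U_1/U_2$ is exactly the slope of the level curves, and it converges uniformly to $U_{1,x}(\cdot,0)/U_{2,x}(\cdot,0)$ as $x\to0^+$. Hence $\mathcal V'=U_{1,x}(\cdot,0)/U_{2,x}(\cdot,0)$, and letting $q\to\infty$ yields every $\alpha<\tfrac12$. The factor $\tfrac12$ is the time exponent of a gradient that is only parabolically $C^{1-3/q}$, which is all you can get when $p''\in L^\infty$.

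The higher regularity comes from the same flattened equations by boundary Schauder estimates, with no hodograph transform. If $\mathcal V'\in C^\rho$ (and $p\in C^{m+1}$), the drift and sources lie in the parabolic class $C^{2\rho,\rho}$. Then $U_i\in C^{2\rho+2,\rho+1}$ up to $x=0$, so $\mathcal V'\in C^{\rho+1/2}$. Iterating this step $2(m-1)$ times gives $\mathcal V\in C^{m,\alpha}_{\mathrm{loc}}$.
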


\subsubsection{Endpoint behavior of the free boundary}

We finally describe the behavior of the free boundary at the endpoints
of its time domain. We extend $\Gamma$ to $\tau=0$ by the formulas of
Lemma~\ref{lem:necessary-contact-R} with $\tau=0$, so that
$\Gamma(0)\in\mathbb R\cup\{+\infty\}$ if $\gamma\in(0,1)$ and
$\Gamma(0)\in\mathbb R\cup\{-\infty\}$ if $\gamma\in(1,\infty)$.
For $\gamma\in (0,1)$, we use the convention
$\mathcal V(\tau)=+\infty$ when $\tau\notin\mathcal T_0$.

\begin{lem}\label{lem:boundary-T0}
Suppose that $\gamma\in (0,1)$. For every
$\tau_*\in\partial\mathcal T_0\cap(0,T)$,
\[
    \lim_{\substack{\tau\to\tau_*\\ \tau\in\mathcal T_0}}
    \mathcal V(\tau)=+\infty.
\]
\end{lem}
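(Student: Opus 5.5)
Since $\mathcal T_0$ is open (Lemma~\ref{lem:T0-open}) and $\tau_*\in\partial\mathcal T_0\cap(0,T)$, we have $\tau_*\notin\mathcal T_0$. By Corollary~\ref{cor:no-contact-outside-T0}, $R(\tau_*,v)>p(\tau_*)$ for every $v\in\mathbb R$. The plan is to argue by contradiction. Suppose there exist $\tau_n\in\mathcal T_0$ with $\tau_n\to\tau_*$ and $\mathcal V(\tau_n)\le M<\infty$. By Theorem~\ref{thm:contact-geometry-R}, the contact slice at $\tau_n$ is $[\mathcal V(\tau_n),\infty)$, so $R(\tau_n,M)=p(\tau_n)$ for every $n$. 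Since $R$ and $p$ are continuous on $\overline{\Omega_T}$, letting $n\to\infty$ gives $R(\tau_*,M)=p(\tau_*)$. This contradicts the strict inequality at $\tau_*$. Hence every sequence in $\mathcal T_0$ converging to $\tau_*$ has $\mathcal V(\tau_n)\to+\infty$, which is the claim.

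The only points needing care are that the limit is taken through $\mathcal T_0$, and that $\tau_*$ is interior, which ensures $(\tau_*,M)\in\Omega_T$ so that Corollary~\ref{cor:no-contact-outside-T0} applies. Both are given by the hypotheses. The continuity of $R$ up to $\overline{\Omega_T}$ is provided by Lemma~\ref{lem:existence-R}.

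We also need $\mathcal T_0$ to accumulate at $\tau_*$ for the limit to be meaningful. This holds because $\tau_*\in\partial\mathcal T_0$ and $\mathcal T_0$ is open, so every neighborhood of $\tau_*$ meets $\mathcal T_0$. No regularity of $\mathcal V$ beyond the threshold characterization in Theorem~\ref{thm:contact-geometry-R} is needed.

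I expect no substantial obstacle. The step to check is the passage to the limit. It needs only pointwise continuity of $R-p$ at $(\tau_*,M)$ together with monotonicity in $v$, which is what allows us to fix the single point $v=M$ rather than the moving points $\mathcal V(\tau_n)$.
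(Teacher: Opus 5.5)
Your proposal is correct and follows the paper's proof essentially verbatim: assume $\mathcal V(\tau_n)\le M$ along some sequence $\tau_n\in\mathcal T_0$ with $\tau_n\to\tau_*$, use the contact slice $[\mathcal V(\tau_n),\infty)$ to get $R(\tau_n,M)=p(\tau_n)$, and pass to the limit by continuity. The limit $R(\tau_*,M)=p(\tau_*)$ then contradicts Corollary~\ref{cor:no-contact-outside-T0}, which applies because $\tau_*\notin\mathcal T_0$ by openness.
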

\begin{lem}\label{lem:endpoint-zero}
The parametrization of the free boundary $\mathcal{V}$ satisfies
\[
    \lim_{\tau\to 0^+}\mathcal V(\tau)
    =
    \max\{v_B,\Gamma(0)\},
\]
where the equality is understood in the extended real sense.
\end{lem}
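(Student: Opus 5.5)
We prove $\lim_{\tau\to0^+}\mathcal V(\tau)=\max\{v_B,\Gamma(0)\}$ by establishing a lower bound on the $\liminf$ and a matching upper bound on the $\limsup$.

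\textbf{Step 1: lower bound.} Theorem~\ref{thm:contact-geometry-R}(iii) gives $\mathcal V(\tau)>\Gamma(\tau)$. Since $p\in C^{1,1}$, the quantity $p'+\beta p$ is continuous at $0$. Hence $\liminf_{\tau\to0^+}\Gamma(\tau)\ge\Gamma(0)$ in the extended sense. If $\Gamma(0)=+\infty$ for $\gamma<1$, i.e.\ $p'(0)+\beta p(0)\le0$, we do not get divergence this way. In that case we argue directly: either $\tau$ near $0$ lies outside $\mathcal T_0$, so $\mathcal V=+\infty$ by convention, or we use a supersolution argument as in the case $v_B<\Gamma(0)$ below.

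For the bound $\liminf\mathcal V\ge v_B$ we use continuity of $R$ up to $\tau=0$. Fix $v<v_B$. Then $R(0,v)=B(v)>p(0)$, so $R>p$ in a neighbourhood $[0,\delta)\times(v-\eta,v+\eta)$. By Theorem~\ref{thm:contact-geometry-R}(ii) this gives $\mathcal V(\tau)>v$ for $\tau<\delta$. Therefore $\liminf\mathcal V\ge\max\{v_B,\liminf\Gamma\}$.

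\textbf{Step 2: upper bound.} Fix $v_1>\max\{v_B,\Gamma(0)\}$, which requires $\Gamma(0)$ finite. We must show $R(\tau,v)=p(\tau)$ for all $v\ge v_1$ and small $\tau$. At $v\ge v_1>\Gamma(0)$ we have the strict inequality $f(v)<p'(0)+\beta p(0)$. By continuity, on $[0,\delta]\times[v_1,\infty)$,
\[
f(v)\le p'(\tau)+\beta p(\tau)-c_0
\]
for some $c_0>0$. This uses that $f$ is monotone in $v$ in the correct direction for both ranges of $\gamma$: for $\gamma>1$, $f$ decreases to $-\infty$; for $\gamma<1$, $f$ decreases to $0^+$ and $p'(0)+\beta p(0)>0$.

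Next we build a local supersolution on the cylinder $\mathcal N=(0,\delta)\times(v_1,v_1+2M)$ (or the half-line version). Take
\[
\zeta=p(\tau)+K\bigl(\tau\,\phi(v)\bigr),
\]
with a barrier $\phi$ that vanishes on $[v_1+\eta,\infty)$ and is large near $v_1$, for instance $\phi(v)=A\,(v_1+\eta-v)_+^2$.

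We then check the three hypotheses of the comparison principle in Lemma~\ref{lem:existence-R}:
\begin{itemize}
\item $(\partial_\tau-\mathscr L)p=p'+\beta p\ge f+c_0$, so the term $K\tau\phi$ only needs $(\partial_\tau-\mathscr L)(K\tau\phi)\ge-c_0$. This holds for small $\delta$ because $\tau\mathscr L\phi=O(\delta)$ and $\partial_\tau(\tau\phi)=\phi\ge0$.
\item On $\tau=0$ we have $\zeta=p(0)=R(0,\cdot)$, since $v\ge v_1>v_B$.
\item On the lateral boundary $v=v_1$ we need $\zeta\ge R$. This follows from $R(\tau,v_1)-p(\tau)=O(\tau)$, which holds because $R$ is Lipschitz in $\tau$ (as $R\in W^{1,2}_{\infty,\mathrm{loc}}$) and $R(0,v_1)=p(0)$. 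We choose $KA\eta^2$ larger than that Lipschitz constant.
\end{itemize}
Comparison then gives $R\le\zeta=p$ on $v\ge v_1+\eta$, hence $\mathcal V(\tau)\le v_1+\eta$.

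\textbf{Step 3: the infinite cases.} When $\Gamma(0)=-\infty$ (for $\gamma>1$), we run Step 2 for every $v_1>v_B$. When $\Gamma(0)=+\infty$ (for $\gamma<1$), we must show $\mathcal V\to+\infty$. Fix $V$. If $p'(0)+\beta p(0)<0$, then $q'<0$ near $0$, so $(0,\delta)\cap\mathcal T_0=\emptyset$ and $\mathcal V=+\infty$. In the borderline case $p'(0)+\beta p(0)=0$, we use Lemma~\ref{lem:necessary-contact-R} with continuity: $p'(\tau)+\beta p(\tau)\to0$, so $\Gamma(\tau)\to+\infty$, and Step 1 already gives divergence.

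\textbf{Main obstacle.} The main obstacle is the lateral boundary estimate in Step 2, namely the uniform $O(\tau)$ bound for $R-p$ near $\tau=0$ at a fixed $v_1$. I would first try to obtain it from the stopping representation~\eqref{eq:R-stopping-representation}, comparing with $\rho=\tau$, or else from $W^{1,2}_\infty$ regularity extended up to $\tau=0$ away from the kink point $v_B$. If that fails, I would use the slightly weaker modulus $O(\sqrt\tau)$ together with a barrier $\phi$ of parabolic scaling.
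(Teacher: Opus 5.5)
Your lower bound is the paper's argument. Your upper-bound strategy, a barrier plus the comparison principle of Lemma~\ref{lem:existence-R} near $\tau=0$ where $f<p'+\beta p$ strictly, is also the paper's. The gap is the lateral condition in Step~2.

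Your barrier $p+K\tau\phi$ coincides with $p$ at $\tau=0$, so you need the rate $R(\tau,v_1)-p(\tau)\le L\tau$. The reason you give does not supply it. Lemma~\ref{lem:existence-R} gives $R\in W^{1,2}_{\infty,\mathrm{loc}}(\Omega_T)$ only on the open set $\Omega_T=(0,T)\times\mathbb R$. That yields no Lipschitz bound in $\tau$ up to $\tau=0$, and the paper proves none. Your first fallback, comparing with $\rho=\tau$ in \eqref{eq:R-stopping-representation}, bounds $R$ only from below. Separately, the bounded cylinder $(0,\delta)\times(v_1,v_1+2M)$ cannot be used. On its right side your barrier equals $p$, so you would need $R\le p$ there, which is exactly what you are trying to prove. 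Only the half line $(0,\delta)\times(v_1,\infty)$ works.

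The need for a rate comes from your choice of barrier, and the paper avoids it with a time independent one. Fix $v_0>\max\{v_B,\Gamma(0)\}$ and $\kappa,\delta,\eta>0$ with $v_0-\kappa>v_B$ and $f-p'-\beta p\le-\eta$ on $[0,\delta]\times[v_0-\kappa,v_0+\kappa]$. For small $c>0$ one has $(\partial_\tau-\mathscr L)[c(v-v_0)^2]\ge-\eta$ there. Hence $\zeta=p+c(v-v_0)^2$ satisfies $\zeta\ge p$ and $(\partial_\tau-\mathscr L)\zeta\ge p'+\beta p-\eta\ge f$, and $\zeta\ge R$ at $\tau=0$. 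On both lateral sides $\zeta-p=c\kappa^2>0$. So mere continuity of $R$ at $(0,v_0\pm\kappa)$, where $R=p$, gives $\zeta\ge R$ after shrinking $\delta$. Comparison yields $R(\tau,v_0)=p(\tau)$, hence $\mathcal V(\tau)\le v_0$.

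If you want to keep your barrier, the $O(\tau)$ bound does follow from the stopping representation, but as an upper estimate. For every $\rho\in\mathcal S_\tau$ the reward minus $p(\tau)$ equals
\[
\int_0^\rho e^{-\beta s}\bigl(f(L_s^{v_1})-p'(\tau-s)-\beta p(\tau-s)\bigr)\,ds
+e^{-\beta\tau}\bigl(B(L_\tau^{v_1})-p(0)\bigr)^+\mathbf 1_{\{\rho=\tau\}}.
\]
Its expectation is at most $C\tau$ plus a term exponentially small in $1/\tau$, because $v_1>v_B$.

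Finally, your claim in Step~1 that $\Gamma(0)=+\infty$ gives no divergence is wrong, as your own Step~3 shows. In both subcases $\Gamma(\tau)\to+\infty$, so the lower bound already gives $\mathcal V(\tau)\to+\infty$ and no supersolution argument is needed there.
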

\begin{rem}\label{rem:endpoint-T}
The results of this section obtained under Assumption~\ref{as:P_finite} alone also hold at the endpoint $\tau=T$.
Extend $p$ to $[0,T+\delta]$ for some $\delta>0$ so that the extension belongs to $C^{1,1}([0,T+\delta])$ and satisfies $(1-\gamma)p>0$.
By \eqref{eq:R-stopping-representation}, $R(\tau,\cdot)$ depends only on the restriction of $p$ to $[0,\tau]$, so the solution of the extended problem coincides with $R$ on $[0,T]\times\mathbb R$.
Since $\tau=T$ is an interior time for the extended problem, the corresponding local results apply there.
In particular, $\partial_vR(T,v)<0$ whenever $R(T,v)>p(T)$, the contact set at $\tau=T$ is $[\mathcal V(T),\infty)$ when it is nonempty, and $\mathcal V$ is continuous at $T$ in that case.
The $C^{m,\alpha}$ and $C^\infty$ conclusions of Proposition~\ref{prop:boundary-higher-regularity} also hold at $T$, whenever $\mathcal V(T)$ is finite, provided that $p$ admits a $C^{m+1}$ or $C^\infty$ extension, respectively, to a neighborhood of $T$.
For $\gamma\in(0,1)$, whether $T$ belongs to the extended set $\mathcal T_0$ depends only on $q'(T)$ and on $q$ restricted to $[0,T]$, and hence not on the extension.
If $T$ is a boundary point of the extended set $\mathcal T_0$, Lemma~\ref{lem:boundary-T0} gives $\mathcal V(\tau)\to+\infty$ as $\tau\to T$.
\end{rem}

\subsection{Examples of free boundary geometry}
\label{subsec:illustrative-free-boundaries}

We briefly illustrate how the geometry of the free boundary depends on
risk aversion and on the time profile of the welfare floor.  Let
\[
s:=\frac{T-t}{T}\in[0,1],
\qquad
P_i(t):=P(T)e^{-\beta(T-t)}m_i(s),
\]
where
\[
\begin{aligned}
m_1(s)&=1+0.55s,\\
m_2(s)&=1+1.20s-s^2,\\
m_3(s)&=1+8\left(\frac{s^3}{3}-0.40s^2+0.1375s\right).
\end{aligned}
\]
Thus $P_i(T)=P(T)=U_B(\underline x_B)$ in every case.  Under the calibration below, $P(T)\simeq0.4472$ for $\gamma=0.5$ and $P(T)=-20$ for $\gamma=2$.  The numerical
illustrations use $T=20$, $\beta=0.04$, $r=0.02$, $\mu=0.06$,
$\sigma=0.20$, $\epsilon=0.50$, $\kappa_B=1$, $\underline x_B=0.05$,
and $\gamma\in\{0.5,2\}$.

The three profiles separate the two cases of risk aversion.
If $\gamma>1$, the contact set is nonempty at every
interior date, so the free boundary is finite throughout $(0,T)$ even
for the nonmonotone profile $P_2$.  If $\gamma\in(0,1)$, the
set $\mathcal T_0$ is determined by $q'>0$ together with the running record condition for
$q(\tau)=e^{\beta\tau}p(\tau)$.  Under profile $P_1$ every date is active because
$q$ is strictly increasing.  For $P_2$,
$q$ increases only for $0<\tau<0.6T$, which gives a finite boundary for
$0.4T<t<T$.  For $P_3$, $q$ first increases, then decreases, and
regains its previous record at $\tau=0.7T$. Hence
\[
\mathcal T_0=(0,0.25T)\cup(0.7T,T),
\]
which corresponds in calendar time to the two windows
$t\in(0,0.3T)\cup(0.75T,T)$ with a finite boundary.  At the interior endpoints where these windows close or open again, $\mathcal V$ diverges to $+\infty$ by Lemma~\ref{lem:boundary-T0}. The limit at the terminal endpoint is given by Lemma~\ref{lem:endpoint-zero}.  The dashed
curve in Figure~\ref{fig:illustrative-free-boundaries} is the local
necessary bound $\Gamma$ from Lemma~\ref{lem:necessary-contact-R}.
For $\gamma<1$ the condition $\mathcal V>\Gamma$ is
necessary, but the running record condition decides whether a finite
boundary exists at all.

\begin{figure}[!t]
\centering
\begin{tabular}{cc}
\includegraphics[width=0.475\textwidth]{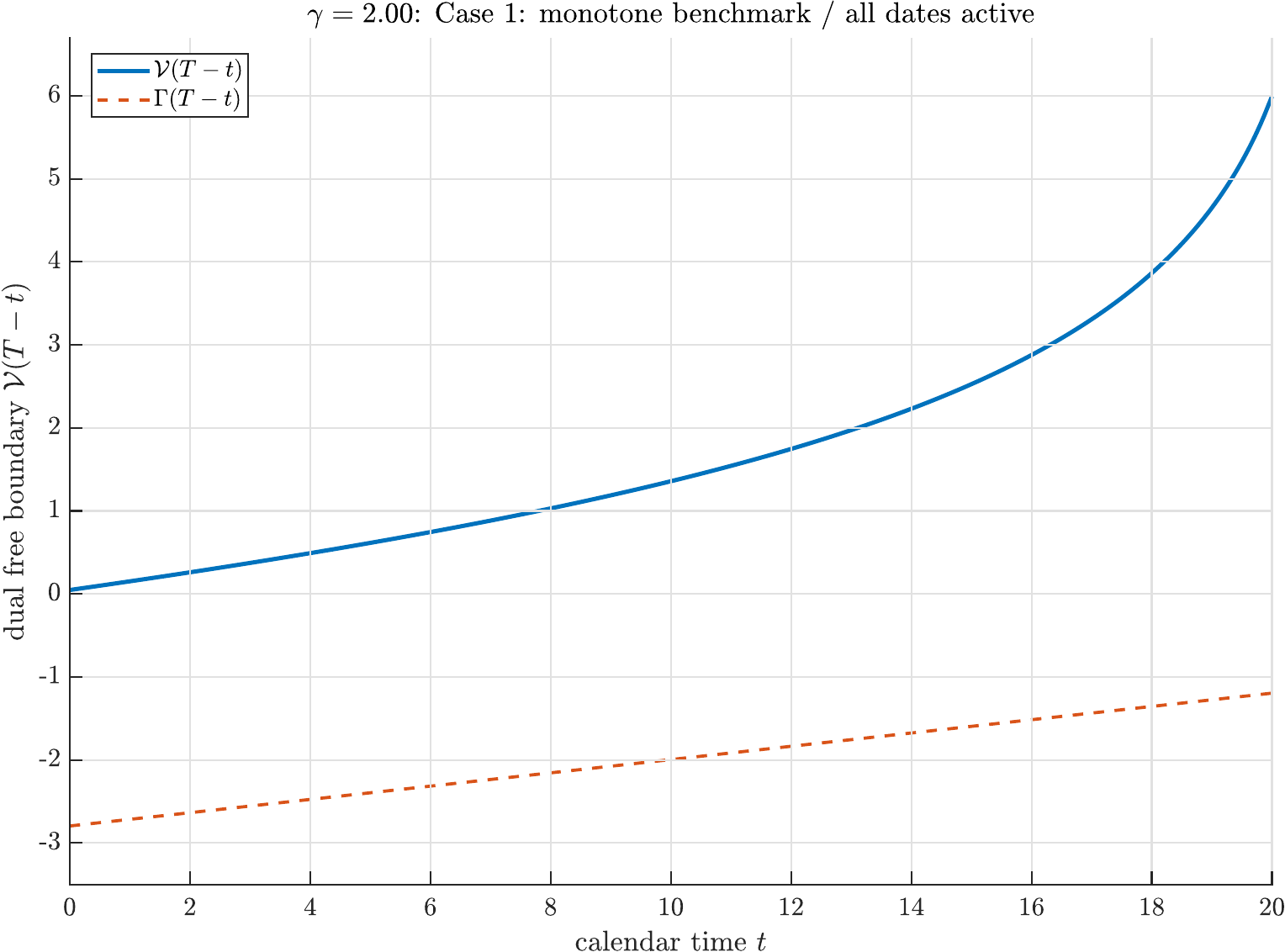}
&
\includegraphics[width=0.475\textwidth]{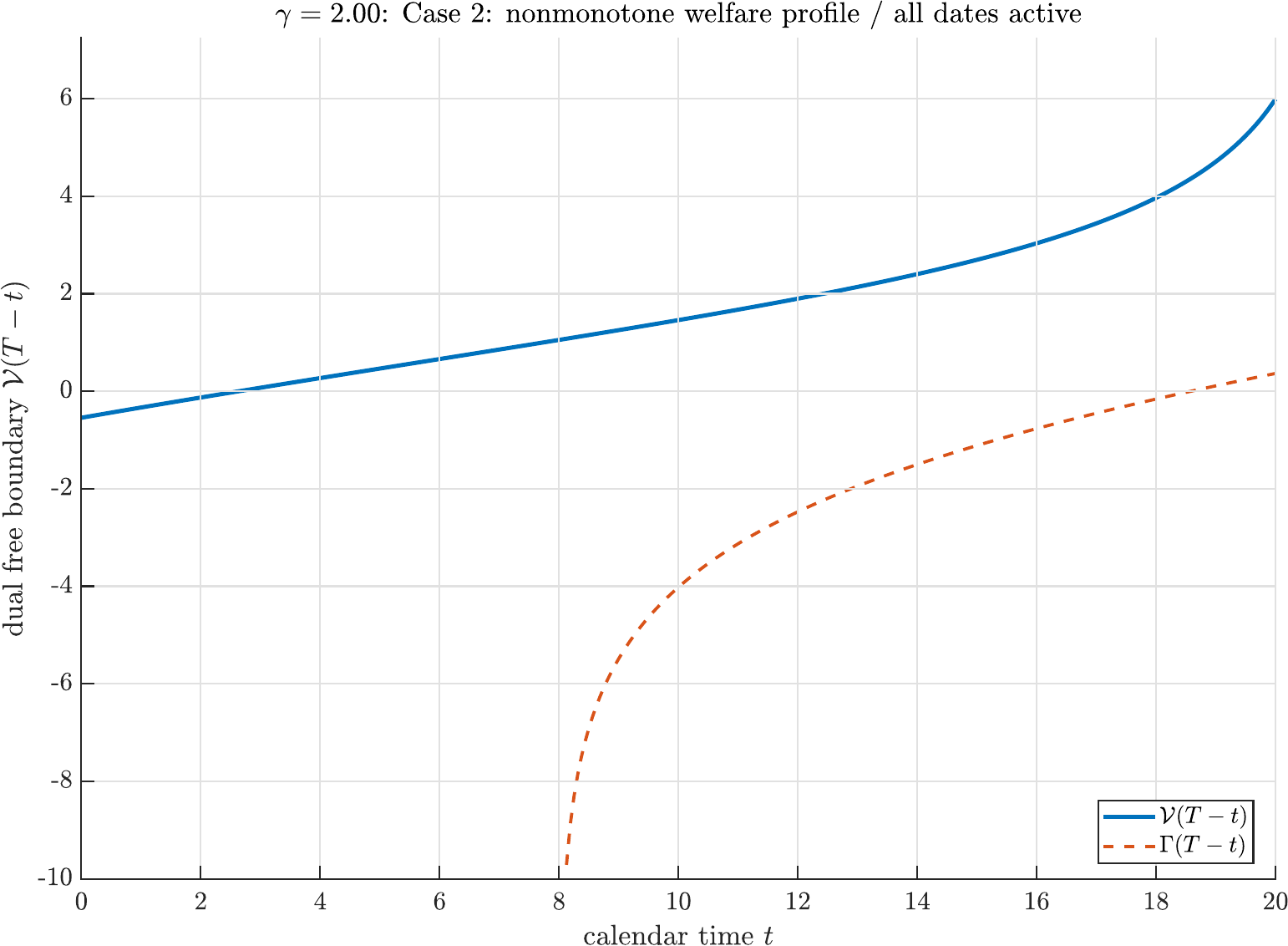}
\\[-0.4em]
{\small (a) $\gamma=2$, profile $P_1$}
&
{\small (b) $\gamma=2$, profile $P_2$}
\\[0.4em]
\includegraphics[width=0.475\textwidth]{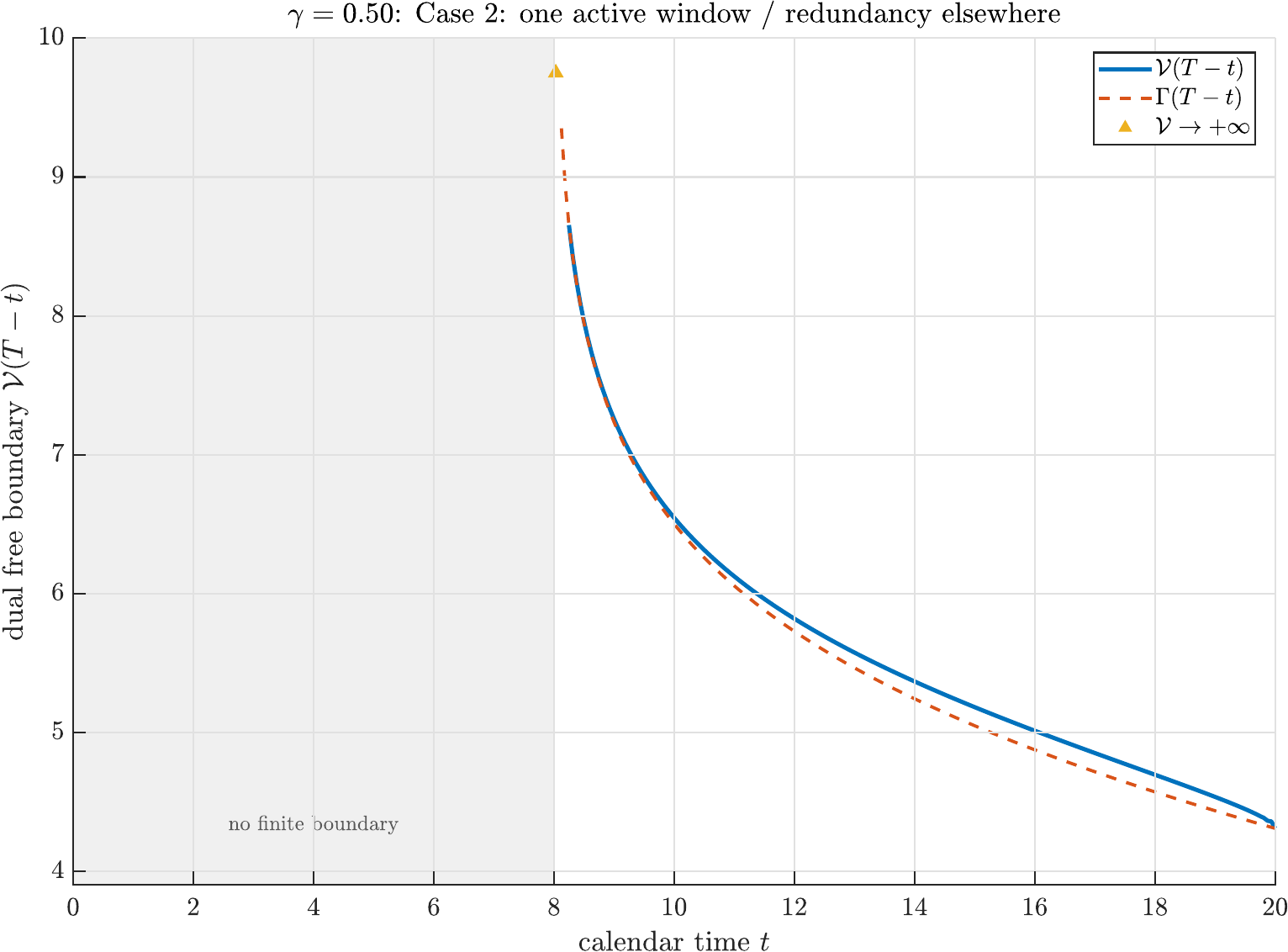}
&
\includegraphics[width=0.475\textwidth]{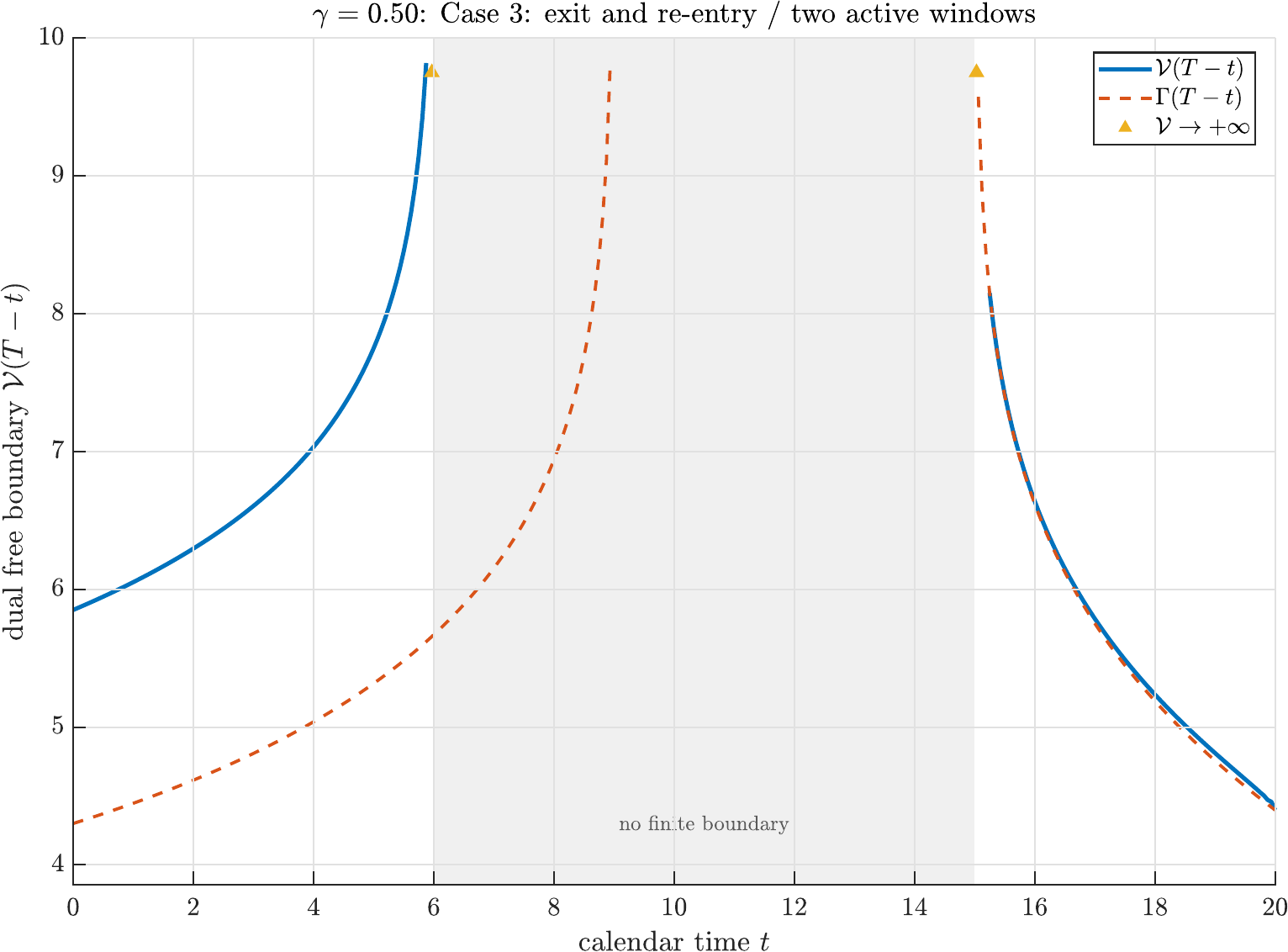}
\\[-0.4em]
{\small (c) $\gamma=0.5$, profile $P_2$}
&
{\small (d) $\gamma=0.5$, profile $P_3$}
\end{tabular}
\caption{Free boundaries under the three welfare
profiles.  Solid curves are $\mathcal V(T-t)$ and dashed curves are
$\Gamma(T-t)$.  Shaded intervals indicate dates with no finite free
boundary, and triangles mark the divergence of $\mathcal V$ at the endpoints
of the active time set.}
\label{fig:illustrative-free-boundaries}
\end{figure}

\section{Verification and Duality}\label{sec:verification-duality}

We now verify the optimal cumulative multiplier, reconstruct the primal controls, and establish strong duality using the analytic properties obtained from the obstacle problem.

\subsection{Construction of the dual value and intervention boundary}

Define
\begin{equation}\label{eq:G-def-verification}
G(t,\xi):=R(T-t,\log\xi)=N(t,\xi)+P(t).
\end{equation}
The obstacle solution determines the dual value, including the linear term
which the identity $Q-\xi Q_\xi=G$ alone leaves undetermined. Set
\begin{equation}\label{eq:unconstrained-coefficients}
\begin{split}
a&:=1-\frac1\gamma,\qquad
\lambda:=\frac{\theta^2}{2}a^2+b_0a-\beta,\\
K(t)&:=\int_0^{T-t}e^{\lambda s}\,ds
+\kappa_B^{(1-\gamma)/\gamma}e^{\lambda(T-t)},\\
G^0(t,\xi)&:=\frac{K(t)}{1-\gamma}\xi^a,\qquad
Q^0(t,\xi):=\frac{\gamma K(t)}{1-\gamma}\xi^a+h(t)\xi,\qquad
D(t,\xi):=G(t,\xi)-G^0(t,\xi).
\end{split}
\end{equation}
Here $Q^0$ is the dual value with neither interim nor terminal welfare
constraints, so that $Q^0-\xi Q^0_\xi=G^0$.

\begin{prop}\label{prop:construct-Q}
The function $D$ is nonnegative and, for every $m>0$,
\begin{equation}\label{eq:D-small-state}
\sup_{0\le t\le T}D(t,\xi)\le C_m\xi^m,\qquad 0<\xi\le1.
\end{equation}
Consequently the formula
\begin{equation}\label{eq:Q-constructive}
\widehat Q(t,\xi)
:=Q^0(t,\xi)-\xi\int_0^\xi\frac{D(t,\eta)}{\eta^2}\,d\eta
\end{equation}
is well defined. It satisfies \eqref{eq:HJB_Q_reduced} and
\eqref{eq:Q_terminal_condition}, is continuous up to $T$, and belongs to
$C^{1,2}([0,T)\times(0,\infty))$, with one sided time derivatives at $0$.
Furthermore,
\begin{equation}\label{eq:Q-reconstruction-identities}
\widehat Q-\xi\widehat Q_\xi=G,\qquad
\widehat Q_{\xi\xi}=-\frac{G_\xi}{\xi}\ge0,
\end{equation}
and, uniformly in $t\in[0,T]$,
\begin{equation}\label{eq:Q-growth-explicit}
|\widehat Q(t,\xi)|+\xi|\widehat Q_\xi(t,\xi)|
\le C(1+\xi+\xi^a).
\end{equation}
\end{prop}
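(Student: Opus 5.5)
The plan is to compare $R$ with the obstacle-free solution and to reconstruct $\widehat Q$ from $G$ by integrating in $\xi$. First I would check directly that $R^0(\tau,v):=G^0(T-\tau,e^v)=\frac{K(T-\tau)}{1-\gamma}e^{av}$ solves $\partial_\tau R^0-\mathscr LR^0=f$ with $R^0(0,\cdot)=B$. Indeed, $\mathscr Le^{av}=\lambda e^{av}$, and $\kappa(\tau):=K(T-\tau)$ satisfies $\kappa'-\lambda\kappa=1$ with $\kappa(0)=\kappa_B^{(1-\gamma)/\gamma}$.

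\textbf{Step 1: $D\ge0$ and the small-state bound.} Since $R(0,\cdot)=\max\{B,p(0)\}\ge B=R^0(0,\cdot)$, either the stopping representation \eqref{eq:R-stopping-representation} with $\rho=\tau$, or the comparison principle of Lemma~\ref{lem:existence-R}, gives $D\ge0$. For \eqref{eq:D-small-state}, apply Dynkin's formula to $e^{-\beta s}R^0(\tau-s,L^v_s)$ and subtract it inside the representation. This yields
\[
D(T-\tau,e^v)=\sup_{\rho}\mathbb E\Big[e^{-\beta\rho}\big(p(\tau-\rho)-R^0(\tau-\rho,L^v_\rho)\big)\mathbf 1_{\{\rho<\tau\}}+e^{-\beta\tau}\big(p(0)-B(L^v_\tau)\big)^+\mathbf 1_{\{\rho=\tau\}}\Big].
\]
In both regimes $\gamma<1$ and $\gamma>1$, $K>0$ is bounded above and below. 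Together with the sign of $p$, this shows that $p-R^0(\cdot,w)<0$ and $p(0)-B(w)<0$ for $w\le M$, where $M$ is a fixed level. Both payoffs are also bounded by $C(1+e^{aw})$. Hence
\[
D\le C\,\mathbb E\big[\sup_{s\le T}(1+e^{aL^v_s})\,\mathbf 1_{\{\sup_{s\le T}L^v_s\ge M\}}\big].
\]
Cauchy--Schwarz and the Gaussian tail of $\sup_s(b_0s+\theta B^\circ_s)$ give a bound of order $e^{-c(M-v)^2}$ as $v\to-\infty$, which beats every $e^{mv}=\xi^m$. The same representation also gives $D\le C(1+\xi^a)$ globally.

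\textbf{Step 2: identities, terminal condition and growth.} Integrability of $D/\eta^2$ near $0$ follows from Step 1. Writing $\widehat Q/\xi=Q^0/\xi-\int_0^\xi D\eta^{-2}\,d\eta$, one gets $\widehat Q-\xi\widehat Q_\xi=-\xi^2\partial_\xi(\widehat Q/\xi)=G^0+D=G$. Differentiating this once more gives $\widehat Q_{\xi\xi}=-G_\xi/\xi$, which is nonnegative by Lemma~\ref{lem:state-monotonicity-R}. At $t=T$ we have $h(T)=0$ and $D(T,\xi)=(p(0)-B(\ln\xi))^+$, and integrating reproduces $\tilde U_B$ in \eqref{eq:tilde_UB_CRRA}. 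For \eqref{eq:Q-growth-explicit}, split the integral at $\eta=1$. Near $0$ use the $\xi^m$ bound; for $\eta\ge1$ use $D\le C(1+\eta^a)$ with $a<1$, so $\xi\int_1^\xi\eta^{-2}(1+\eta^a)\,d\eta\le C(\xi+\xi^a)$.

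\textbf{Step 3: regularity (main obstacle).} Spatial regularity is easy: $R\in W^{1,2}_{\infty,\rm loc}$ makes $\partial_vR$ continuous, so $\widehat Q_{\xi\xi}$ is continuous. The difficulty is continuity of $\widehat Q_t$, which requires differentiating $\int_0^\xi D_t\eta^{-2}\,d\eta$ and hence continuity of $\partial_\tau R$. In general $\partial_\tau R$ is only $L^\infty_{\rm loc}$. I would argue as follows. Inside $\{R>p\}$ and in the interior of the contact set, $\partial_\tau R$ is continuous, the latter because there $\partial_\tau R=p'$. Across the free boundary, Lemma~\ref{lem:time-space-derivative-estimate} gives $|\partial_\tau R-p'|\le C(-\partial_vR)$, and $\partial_vR\to0$ at the boundary, so $\partial_\tau R\to p'$ there. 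On slices outside $\mathcal T_\gamma$ there is no contact at all. Uniform control of $\partial_\tau R$ near $v\to-\infty$, needed for dominated differentiation under the integral, would come from the same Gaussian-tail argument applied to difference quotients in $\tau$. I would handle this through the representation, using that $p\in C^{1,1}$.

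\textbf{Step 4: the variational inequality.} Set $W:=\widehat Q_t+\mathcal L\widehat Q+\tilde u(\xi)+\epsilon\xi$. The operator $I-\xi\partial_\xi$ commutes with $\partial_t+\mathcal L$, since $\mathcal L$ is of Euler type, and it annihilates $\epsilon\xi$. Therefore
\[
(I-\xi\partial_\xi)W=(\partial_t+\mathcal L)G+\tilde u-\xi\tilde u'=:H.
\]
In the original variables $H=-\big[(\partial_\tau-\mathscr L)R-f\big]$, so $H=0$ on $\{R>p\}$ and $H\le0$ a.e. on the contact set. Step 1 and the exactness of the equation for $Q^0$ give $W=o(\xi)$ as $\xi\to0$. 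Hence $W=-\xi\int_0^\xi H\eta^{-2}\,d\eta\ge0$. By Theorem~\ref{thm:contact-geometry-R}, on each slice the noncontact set is the initial interval $(0,e^{\mathcal V})$, or all of $(0,\infty)$ when the slice lies outside $\mathcal T_\gamma$. So $H\equiv0$ on $(0,\xi)$ whenever $N(t,\xi)>0$, and then $W=0$. Together with $N=G-P\ge0$, this gives \eqref{eq:HJB_Q_reduced}.
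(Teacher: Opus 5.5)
Your proposal is sound, and Steps 1, 2 and 4 are the paper's proof. The paper also subtracts the stopped identity for $G^0$ from \eqref{eq:R-stopping-representation} to write $D$ as a stopping gain, and it disposes of small states with a moment (Markov) bound where you use a Gaussian tail. It reaches your formula $W=-\xi\int_0^\xi H\eta^{-2}\,d\eta$ as \eqref{eq:Q-HJB-residual}, but in the opposite direction: it applies $\phi\mapsto-\xi\int_0^\xi\phi\,\eta^{-2}\,d\eta$, which commutes with $\partial_t+\mathcal L$, to $(\partial_t+\mathcal L)D=H$, rather than applying $I-\xi\partial_\xi$ to the residual.

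Where you genuinely differ is Step 3. The paper never produces a continuous version of $\partial_\tau R$ here. It reads $\widehat Q_t$ off \eqref{eq:Q-HJB-residual}, whose right side is continuous because $\mathcal C_t=[b(t),\infty)$ with $1/b$ continuous, including at disappearance times, and because $\widehat Q_{\xi\xi}=-G_\xi/\xi$ is continuous. Your route is valid: $\partial_\tau R$ extends continuously across the free boundary by Lemma~\ref{lem:time-space-derivative-estimate} and smooth fit. This is the argument the paper uses later in the proof of Proposition~\ref{prop:shadow-welfare-identity}. It does not need continuity of $\mathcal V$ and gives continuity of $G_t$ at once. The paper's route needs only $R_\tau\in L^\infty_{\mathrm{loc}}$ and a distributional identity, but it relies on continuity of the boundary.

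One point needs tightening. In Step 4 you say Step 1 gives $W=o(\xi)$, but Step 1 controls only $D$. Since $Q^0$ solves its equation exactly, $W=-(\partial_t+\mathcal L)\bigl[\xi\int_0^\xi D\eta^{-2}\,d\eta\bigr]$. This contains $\tfrac{\theta^2}{2}\xi D_\xi$ and $\xi\int_0^\xi D_t\eta^{-2}\,d\eta$, so you need decay of $D_\xi$ and $D_t$ as $\xi\to0$. That is the same bound your dominated differentiation in Step 3 needs.

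Difference quotients of the stopping representation are awkward for this, because both the horizon and the rewards shift with $\tau$. It is simpler to note that there is no contact below a fixed level $v_1$. For $\gamma<1$ this follows from Lemma~\ref{lem:necessary-contact-R}, since $\Gamma$ is bounded below; for $\gamma>1$ it follows from Lemma~\ref{lem:bounds-gamma-greater-one}. Below $v_1$ we have $(\partial_\tau-\mathscr L)D=0$, so interior parabolic estimates on unit cylinders give $|D_\tau|+|D_v|+|D_{vv}|\le C\sup|D|\le C_m e^{mv}$ for $\tau$ bounded away from $0$. This is what the paper means by local parabolic estimates below $d_*$, and it even gives $W\equiv0$ for small $\xi$.

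You also leave out two endpoint claims. Continuity up to $T$ follows from $R\in C(\overline{\Omega_T})$ and dominated convergence with \eqref{eq:D-small-state}. The one-sided time derivative at $t=0$ follows from Remark~\ref{rem:endpoint-T}.
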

We use $\widehat Q$ as the verification candidate. Theorem~\ref{thm:dual-verification}
identifies it with the reduced value $Q$ defined in Section~\ref{sec:optimization}.

Define the terminal dual kink
\begin{equation}\label{eq:qB-terminal}
q_B:=\kappa_B^{1-\gamma}\underline x_B^{-\gamma}=e^{v_B}.
\end{equation}
For $0\le t<T$, set
\begin{equation}\label{eq:dual-threshold-b-exact}
b(t):=\inf\{\xi>0:N(t,\xi)=0\},\qquad\inf\varnothing:=+\infty,
\end{equation}
and set
\begin{equation}\label{eq:dual-threshold-terminal}
b(T):=q_B.
\end{equation}
For interior dates,
\begin{equation}\label{eq:dual-threshold-from-V}
b(t)=
\begin{cases}
\exp\{\mathcal V(T-t)\},&T-t\in\mathcal T_\gamma,\\
+\infty,&T-t\notin\mathcal T_\gamma.
\end{cases}
\end{equation}
At $t=0$ the boundary is determined by $R(T,\cdot)$. By Remark~\ref{rem:endpoint-T}, \eqref{eq:dual-threshold-from-V} also holds at $t=0$ with the extended set $\mathcal T_\gamma$. The left limit of $b$ at $T$ may exceed
$q_B$, which permits a terminal multiplier jump enforcing $X_T\ge\underline x_B$.

\subsection{The reflected cumulative multiplier}

Fix $(t,y,z)\in[0,T]\times(0,\infty)\times[0,\infty)$ and write
$A_{t-}:=1+z$. A ratio with denominator $+\infty$ is interpreted as zero.
Define
\begin{equation}\label{eq:A-star-reflection}
 A_s^{*,t,y,z}
 :=(1+z)\vee
   \sup_{t\le u\le s}
   \frac{\mathcal Y_u^{t,y}}{b(u)},
 \qquad t\le s\le T,
\end{equation}
and
\begin{equation}\label{eq:Z-star-reflection} 
 \mathcal Z_s^{*,t,y,z}:=A_s^{*,t,y,z}-1,
 \qquad
 \Xi_s^{*,t,y,z}
 :=\frac{\mathcal Y_s^{t,y}}{A_s^{*,t,y,z}}. 
\end{equation}
The running supremum is right continuous and gives the smallest nondecreasing multiplier that keeps the normalized dual state below $b$.

\begin{lem}\label{lem:reflection-moments}
There is a deterministic $b_*>0$ such that $b(t)\ge b_*$ for every
$t\in[0,T]$, with the convention of the extended real line. The reciprocal $1/b$ is
continuous on $[0,T)$, has a left limit at $T$, and can jump upwards only at
$T$. The process in \eqref{eq:A-star-reflection} satisfies
\begin{equation}\label{eq:A-moment-bound}
A_s^{*,t,y,z}\le 1+z+b_*^{-1}\sup_{t\le u\le T}\mathcal Y_u^{t,y},
\qquad
\mathbb E\big[(A_T^{*,t,y,z})^m\big]<\infty\quad(m>0).
\end{equation}
All positive and negative moments of the associated consumption and all
positive moments of terminal wealth are finite, uniformly over time for
each fixed initial state. Moreover,
\begin{equation}\label{eq:boundary-zero-occupation}
\int_t^T\mathbf1_{\{\Xi_s^{*,t,y,z}=b(s)<\infty\}}\,ds=0
\qquad\text{a.s.}
\end{equation}
\end{lem}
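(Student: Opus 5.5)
The plan is to prove the claims in the order stated. The positive lower bound on $b$ and the continuity of $1/b$ are analytic and come from Section~\ref{sec:parabolic-obstacle}. The moment bounds are probabilistic and follow from the running supremum structure of \eqref{eq:A-star-reflection}. The zero occupation statement uses the fact that $\mathcal Y$ has a nondegenerate diffusion coefficient.

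\textbf{Lower bound and continuity of $1/b$.}
\begin{enumerate}[label=(\roman*)]
\item For interior dates, $b(t)=\exp\{\mathcal V(T-t)\}$ by \eqref{eq:dual-threshold-from-V}. Theorem~\ref{thm:contact-geometry-R}(3) gives $\mathcal V>\Gamma$.
\item For $\gamma>1$, Lemma~\ref{lem:bounds-gamma-greater-one} gives $(-\infty,v_*]$ inside the noncontact set uniformly in $\tau\in[0,T]$, so $\mathcal V\ge v_*$.
\item For $\gamma\in(0,1)$, $p'+\beta p$ is bounded above on $[0,T]$. Hence $\Gamma(\tau)\ge\frac{\gamma}{\gamma-1}\ln[(1-\gamma)\sup(p'+\beta p)^+]$ is bounded below wherever it is finite, and $\mathcal V=+\infty$ off $\mathcal T_0$.
\item At $t=0$ I would invoke Remark~\ref{rem:endpoint-T}. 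At $t=T$ we have $b(T)=q_B>0$.
\item Taking $b_*$ as the minimum of these bounds gives $b\ge b_*$.
\end{enumerate}
For continuity of $1/b=e^{-\mathcal V(T-\cdot)}$ on $[0,T)$, I would argue as follows.
\begin{enumerate}[label=(\roman*)]
\item On $\mathcal T_\gamma$ it follows from Lemma~\ref{lem:free-boundary-holder}.
\item At the endpoints of $\mathcal T_0$ it follows from Lemma~\ref{lem:boundary-T0}, since $\mathcal V\to+\infty$ gives $1/b\to0$, which matches $1/b=0$ outside.
\item Outside the closure of $\mathcal T_0$, $1/b$ is identically zero.
\item At $t=0$ (i.e.\ $\tau=T$) it follows from Remark~\ref{rem:endpoint-T}.
\end{enumerate}
The left limit at $T$ is given by Lemma~\ref{lem:endpoint-zero}: $1/b(T-)=e^{-\max\{v_B,\Gamma(0)\}}\le e^{-v_B}=1/q_B$. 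So the only possible jump is upward at $T$.

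\textbf{Moment bounds.} The first inequality in \eqref{eq:A-moment-bound} is immediate from $1/b\le 1/b_*$. Since $\mathcal Y^{t,y}$ is a geometric Brownian motion, Doob's inequality, or the explicit law of the running maximum of a drifted Brownian motion, gives finite moments of all orders for $\sup_u\mathcal Y_u^{t,y}$. Hence $\mathbb E[(A^*_T)^m]<\infty$, and $\mathcal Z^*\in\Pi_t(z)$.

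The consumption is $\hat c_s=I(\Xi^*_s)=(\Xi^*_s)^{-1/\gamma}$.
\begin{enumerate}[label=(\roman*)]
\item By construction $\Xi^*_s\le b(s)$, but $b$ may be infinite. I would therefore bound $\Xi^*_s\le \mathcal Y_s/(1+z)$, which controls negative powers of $\Xi^*$ and hence positive moments of consumption.
\item For positive powers of $\Xi^*$, I would use $\Xi^*_s\ge \mathcal Y_s/(1+z+b_*^{-1}\sup\mathcal Y)$. Combining this with Hölder's inequality and the moments of $\inf\mathcal Y$ and $\sup\mathcal Y$ bounds all moments of $1/\Xi^*$ and of $\Xi^*$.
\item Terminal wealth $\hat X_T=\max\{\underline x_B,\kappa_B^{(1-\gamma)/\gamma}(\Xi_T^*)^{-1/\gamma}\}$ is handled the same way.
\end{enumerate}
Uniformity in time follows because all bounds are expressed through $\sup_{[t,T]}$ and $\inf_{[t,T]}$ of $\mathcal Y$.

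\textbf{Zero occupation.} This is the step I expect to be the main obstacle. On the event $\{\Xi^*_s=b(s)<\infty\}$ we have $\ln\mathcal Y_s-\ln A^*_s=\ln b(s)$. The idea is that $\ln\mathcal Y$ is a Brownian motion with drift and nonzero volatility $\theta$, while $\ln A^*$ is nondecreasing and $\ln b$ is continuous on $[0,T)$ by the previous step. Then $\ln\Xi^*$ is a continuous semimartingale whose martingale part has quadratic variation $\theta^2\,ds$. By the occupation times formula, $\int \mathbf1_{\{\ln\Xi^*_s-\ln b(s)=0\}}\,d\langle\ln\Xi^*\rangle_s=0$ whenever $\ln\Xi^*-\ln b$ is a semimartingale.

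However, $\ln b$ is only locally $C^{1,\alpha}$ on $\mathcal T_\gamma$ by Proposition~\ref{prop:boundary-higher-regularity}, so it is of finite variation on compacts of $\mathcal T_\gamma$. I would therefore work on an exhausting sequence of compact subintervals of $\mathcal T_\gamma$, where $\ln b$ is $C^1$. On each such subinterval $\ln\Xi^*-\ln b$ is a continuous semimartingale with quadratic variation $\theta^2 ds$, and the occupation formula applied at level $0$ gives zero Lebesgue time at the boundary. Off $\mathcal T_\gamma$, $b=\infty$, so those dates contribute nothing. Countable additivity then gives \eqref{eq:boundary-zero-occupation}.
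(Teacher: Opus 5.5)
Your proposal is correct and follows the paper's route. The steps match: the lower bound on $b$ from $\mathcal V>\Gamma$ (for $\gamma<1$), from $v_*$ (for $\gamma>1$) and from $q_B$; continuity of $1/b$ from the H\"older and endpoint lemmas; the two-sided bound $\inf_u\mathcal Y_u^{t,y}/(1+z+b_*^{-1}\sup_u\mathcal Y_u^{t,y})\le\Xi_s^{*}\le\mathcal Y_s^{t,y}/(1+z)$; and the occupation density formula for $\log\Xi^*-\log b$ on a compact exhaustion of the active dates. The paper only needs Lipschitz continuity of $\log b$ there (from the level-curve part of Proposition~\ref{prop:boundary-higher-regularity}), so invoking $C^{1,\alpha}$ is more than needed but harmless.

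One small slip: the labels in your steps (i) and (ii) are swapped, although the combined conclusion is right. The upper bound $\Xi^*_s\le\mathcal Y_s^{t,y}/(1+z)$ controls positive powers of $\Xi^*$, hence negative moments of consumption. The lower bound controls negative powers of $\Xi^*$, hence positive moments of consumption and terminal wealth.
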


\begin{prop}
\label{prop:dual-shadow-reflection}
Under Assumption~\ref{as:P_finite},
$\mathcal Z^{*,t,y,z}\in\Pi_t(z)$ and
\begin{equation}\label{eq:state-constraint-reflection}
 \Xi_s^{*,t,y,z}\le b(s),\qquad t\le s\le T.
\end{equation}
Moreover, $d\mathcal Z^{*,t,y,z}$ is carried by the set on which the reflected
state is on the intervention boundary:
\begin{equation}\label{eq:Skorokhod-support}
 \int_{[t,T]}
 \mathbf 1_{\{\Xi_s^{*,t,y,z}<b(s)\}}
 \,d\mathcal Z_s^{*,t,y,z}=0
\end{equation}
and
\begin{equation}\label{eq:N-complementarity-exact}
 \int_{[t,T]}
 N\bigl(s,\Xi_s^{*,t,y,z}\bigr)
 \,d\mathcal Z_s^{*,t,y,z}=0.
\end{equation}
At a jump time, including $t$ or $T$, the state after intervention equals the
current boundary whenever the jump is nonzero.
\end{prop}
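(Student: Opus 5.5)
Membership in $\Pi_t(z)$ comes first. By construction $A^{*}=A^{*,t,y,z}$ is a running supremum of the continuous process $\mathcal Y^{t,y}_u/b(u)$, maximized with the constant $1+z$. By Lemma~\ref{lem:reflection-moments}, $1/b$ is continuous on $[0,T)$ with a left limit at $T$ and at most an upward jump at $T$. Hence $u\mapsto \mathcal Y^{t,y}_u/b(u)$ is continuous on $[t,T)$, and the running supremum is nondecreasing, adapted and càdlàg; the only possible jumps are at $t$ and at $T$. We have $\mathcal Z^*_{t-}=z$ by the convention $A_{t-}=1+z$, and $\mathcal Z^*\ge z\ge0$. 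Finiteness and $\mathbb E[(1+\mathcal Z^*_T)^m]<\infty$ follow directly from \eqref{eq:A-moment-bound}, so $\mathcal Z^*\in\Pi_t(z)$.

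The state constraint \eqref{eq:state-constraint-reflection} is immediate: $A^*_s\ge \mathcal Y^{t,y}_s/b(s)$, hence $\Xi^*_s=\mathcal Y_s/A^*_s\le b(s)$. When $b(s)=+\infty$ the ratio is zero and the inequality is trivial.

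For the Skorokhod support \eqref{eq:Skorokhod-support}, I would use the standard property of running suprema of a function with only finitely many jumps. Fix $\omega$ and $s$ with $\Xi^*_s<b(s)$, i.e.\ $A^*_s>\mathcal Y_s/b(s)$. If $s\in(t,T)$, continuity of $\mathcal Y/b$ near $s$ gives a neighborhood on which $\mathcal Y_u/b(u)<A^*_{s-\delta}$. So $A^*$ is constant on $(s-\delta,s+\delta)$, and such $s$ lie in an open set charged by no $dA^*$. The subtle case is $b(u)=+\infty$ on an interval. There $1/b=0$, which is still continuous, so the same argument works; this is where the continuity of $1/b$ from Lemma~\ref{lem:reflection-moments} rather than of $b$ is needed.

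At the endpoints one computes the jumps directly. At $s=t$, $\Delta A^*_t=(\mathcal Y_t/b(t)-(1+z))^+$, and if this is positive then $A^*_t=y/b(t)$, so $\Xi^*_t=b(t)$. At $s=T$, $A^*_T=A^*_{T-}\vee \mathcal Y_T/b(T)$, so a nonzero jump forces $\Xi^*_T=b(T)=q_B$. This gives the last sentence of the proposition and shows that atoms are carried by $\{\Xi^*=b\}$. The continuous part is handled by the open-set argument above.

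Finally, \eqref{eq:N-complementarity-exact} follows from \eqref{eq:Skorokhod-support}: $d\mathcal Z^*$ is carried by $\{\Xi^*_s=b(s)<\infty\}$, and we need $N(s,b(s))=0$ there. For $s<T$ with $b(s)<\infty$, Theorem~\ref{thm:contact-geometry-R}(2) and \eqref{eq:dual-threshold-from-V}, with Remark~\ref{rem:endpoint-T} covering $s=0$, show that the contact slice is the closed set $[\mathcal V(T-s),\infty)$; hence $R(T-s,\mathcal V(T-s))=p(T-s)$, i.e.\ $N(s,b(s))=0$. At $s=T$, \eqref{eq:N_terminal_condition_CRRA} gives $N(T,\xi)=0$ for $\xi\ge q_B$, so $N(T,q_B)=0$ by continuity of the terminal datum at the kink.

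The main obstacle I expect is the support argument on $(t,T)$ near dates where $b$ passes between finite values and $+\infty$. There one needs that $A^*$ cannot increase on $\{\Xi^*<b\}$ even though $b$ is unbounded nearby, and that on $\{b=\infty\}$ no mass is charged. Continuity of $1/b$ (including its vanishing at the endpoints of $\mathcal T_0$, Lemma~\ref{lem:boundary-T0}) resolves both: increase points of a running supremum of a continuous function $g=\mathcal Y/b$ satisfy $g(s)=A^*_s$, which forces $g(s)>0$ and thus $b(s)<\infty$.
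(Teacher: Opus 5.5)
Your proposal is correct and follows essentially the same route as the paper. Both arguments get membership in $\Pi_t(z)$ and the state constraint from the running-supremum definition, the continuity of $1/b$ and the moment bound of Lemma~\ref{lem:reflection-moments}. Both get the support property from the fact that a running supremum increases only when the current value $\mathcal Y_s/b(s)$ attains it, with atoms only at $t$ and $T$, and the $N$-complementarity from $N(s,b(s))=0$ whenever $b(s)<\infty$. Your open-set argument at interior times and your check that $N(T,q_B)=0$ at the terminal kink spell out details the paper states in a single line.
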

For the original problem we abbreviate
\begin{equation}\label{eq:Z-star-y-abbrev}
 \mathcal Z^{*,y}:=\mathcal Z^{*,0,y,0},\qquad
 A_t^y:=1+\mathcal Z_t^{*,y},\qquad
 \Xi_t^y:=\frac{\mathcal Y_t^y}{A_t^y}.
\end{equation}
In particular,
\begin{equation}\label{eq:Z-star-y-explicit}
 \mathcal Z_t^{*,y}
 =\left(1\vee\sup_{0\le s\le t}
       \frac{\mathcal Y_s^y}{b(s)}\right)-1.
\end{equation}

\subsection{Dual verification}

\begin{thm}\label{thm:dual-verification}
Under Assumption~\ref{as:P_finite}, the cumulative multiplier $\mathcal Z^{*,t,y,z}$ defined in \eqref{eq:Z-star-reflection} is optimal for the singular control
problem \eqref{eq:three_dim_singular_finite}.  More precisely,
\begin{equation}\label{eq:dual-verification-value}
 \mathfrak J(t,y,z)
 =(1+z)Q\!\left(t,\frac{y}{1+z}\right),
\end{equation}
and the infimum is attained by
$\mathcal Z^{*,t,y,z}$.  In particular,
\begin{equation}\label{eq:J-Q-initial}
 J(y)=Q(0,y)=\mathfrak L\bigl(y,\mathcal Z^{*,y}\bigr).
\end{equation}
\end{thm}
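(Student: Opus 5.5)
The plan is a standard verification argument built on the candidate $\widehat Q$ of Proposition~\ref{prop:construct-Q}. Set $\widehat{\mathfrak J}(t,y,z):=(1+z)\widehat Q(t,y/(1+z))$. We will show two things. First, $\widehat{\mathfrak J}$ is a lower bound for the criterion in \eqref{eq:three_dim_singular_finite} along every $\mathcal Z\in\Pi_t(z)$. Second, this bound is attained by $\mathcal Z^{*,t,y,z}$. The identity \eqref{eq:dual-verification-value} then holds with $Q$ replaced by $\widehat Q$. Taking $z=0$ gives $\mathfrak J(t,\xi,0)=\widehat Q(t,\xi)$, so $\widehat Q=Q$, and Proposition~\ref{pro:dimension_reduction} is recovered. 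In particular $N$ as constructed from $R$ agrees with \eqref{eq:N_def}, and \eqref{eq:J-Q-initial} follows by taking $t=0$, $z=0$.

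\emph{Lower bound.} Fix $\mathcal Z\in\Pi_t(z)$ and write $A=1+\mathcal Z$. Apply the It\^o formula for semimartingales with jumps, of bounded variation in $A$, to $e^{-\beta(s-t)}\widehat{\mathfrak J}(s,\mathcal Y_s,\mathcal Z_s)$ on $[t,T]$. The regularity needed is $C^{1,2}$ on $[0,T)$ together with continuity up to $T$. To handle $T$, first stop at $T-\delta$ and at localizing times, then let $\delta\to0$.

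The $ds$-terms give $(\partial_t+\mathcal L)\widehat Q(s,\Xi_s)\,A_s\ge -A_s\tilde u(\Xi_s)-\epsilon\mathcal Y_s$, using the first part of \eqref{eq:HJB_Q_reduced}.

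For the continuous part of $dA$, the integrand is $\partial_z\widehat{\mathfrak J}=\widehat Q-\xi\widehat Q_\xi=G\ge P$.

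For a jump from $A_{s-}$ to $A_s$, the increment equals $\int_{A_{s-}}^{A_s}G(s,\mathcal Y_s/a)\,da\ge P(s)\Delta\mathcal Z_s$. This uses the fact that $a\mapsto a\widehat Q(s,y/a)$ has derivative $G(s,y/a)$.

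Combining these with the terminal condition $\widehat Q(T,\cdot)=\tilde U_B$ gives the pathwise inequality
\[
\widehat{\mathfrak J}(t,y,z)\le \int_t^T e^{-\beta(s-t)}\big(A_s\tilde u(\Xi_s)+\epsilon\mathcal Y_s\big)ds+e^{-\beta(T-t)}A_T\tilde U_B(\Xi_T)-\int_{[t,T]}e^{-\beta(s-t)}P(s)\,d\mathcal Z_s-M_T,
\]
where $M$ is a local martingale with integrand $e^{-\beta(s-t)}\mathcal Y_s\widehat Q_\xi(s,\Xi_s)\theta$.

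\emph{Removing the local martingale.} By the growth bound \eqref{eq:Q-growth-explicit}, $|\mathcal Y\widehat Q_\xi(\Xi)|\le C A(1+\Xi+\Xi^a)$. We also have $\Xi\le\mathcal Y$ and $\Xi^a\le \mathcal Y^a A^{-a}$, so every term is dominated by polynomials in $A_T$, $\sup\mathcal Y$ and $\sup\mathcal Y^{-1}$. All of these have finite moments, for $A_T$ by the definition of $\Pi_t(z)$. The local martingale is therefore a true martingale, and the same bounds justify passing to the limit in the localization. Taking $\mathbb E_t$ yields $\widehat{\mathfrak J}\le$ criterion, and hence $\widehat{\mathfrak J}\le\mathfrak J$.

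\emph{Equality for $\mathcal Z^*$.} Proposition~\ref{prop:dual-shadow-reflection} gives $\mathcal Z^*\in\Pi_t(z)$. For this multiplier, every inequality above becomes an equality.

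In the $ds$-terms, the PDE holds on $\{N>0\}=\{\Xi<b\}$, and $\{\Xi_s=b(s)\}$ has zero Lebesgue occupation by \eqref{eq:boundary-zero-occupation}.

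The continuous $d\mathcal Z^*$-term vanishes by \eqref{eq:N-complementarity-exact}.

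At a jump, the post-jump state lies on the boundary. The intermediate states $\mathcal Y_s/a$ for $a\in[A_{s-},A_s]$ run from above $b(s)$ down to $b(s)$, so they lie in the contact set $[b(s),\infty)$, where $G=P$ by Theorem~\ref{thm:contact-geometry-R}(2). Hence the jump integral equals $P(s)\Delta\mathcal Z_s$ exactly.

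The jump at $T$ is treated using $b(T)=q_B$ and the flat part of $N(T,\cdot)$ in \eqref{eq:N_terminal_condition_CRRA}. This is the main delicacy: the It\^o expansion has to be combined with the left limit of $b$ at $T$, the continuity of $\widehat Q$ at $T$ given by Proposition~\ref{prop:construct-Q}, and the moment bounds of Lemma~\ref{lem:reflection-moments}, which provide uniform integrability in the limit $\delta\to0$. I expect this terminal passage, together with the domination of the $\delta\to0$ limit, to be the main obstacle.
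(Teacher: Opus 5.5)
Your proposal is correct and follows the paper's own verification argument essentially step by step: It\^o's formula for $A\,\widehat Q(s,\mathcal Y/A)$ using $\partial_A F=G\ge P$, jump increments written as $\int G\,da$, equality along the reflected multiplier because $G=P$ on $[b(s),\infty)$ (including the terminal jump at $b(T)=q_B$), and removal of the localization via \eqref{eq:Q-growth-explicit} together with the multiplier moments. The only cosmetic difference is in the equality for the $ds$-term: you use the zero-occupation property \eqref{eq:boundary-zero-occupation}, whereas the paper notes that the residual \eqref{eq:Q-HJB-residual} already vanishes on the closure of the waiting region; either argument suffices.
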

\subsection{Primal variables generated by the cumulative multiplier}

For $y>0$, define the pointwise Fenchel optimizers
\begin{equation}\label{eq:cstar-xTstar}
 c_t^y:=I(\Xi_t^y)=(\Xi_t^y)^{-1/\gamma},
 \qquad
 X_T^y:=\hat x_B(\Xi_T^y)
 =\max\left\{\underline x_B,
 \kappa_B^{\frac{1-\gamma}{\gamma}}(\Xi_T^y)^{-1/\gamma}
 \right\}.
\end{equation}
Define their continuation utility by
\begin{equation}\label{eq:Wstar-y}
 \mathcal W_t^y
 :=\mathbb E_t\!\left[
 \int_t^T e^{-\beta(s-t)}u(c_s^y)\,ds
 +e^{-\beta(T-t)}U_B(X_T^y)
 \right].
\end{equation}

\begin{prop}\label{prop:shadow-welfare-identity}
For every $y>0$,
\begin{equation}\label{eq:shadow-welfare-identity}
 \mathcal W_t^y
 =G(t,\Xi_t^y)
 =Q(t,\Xi_t^y)-\Xi_t^yQ_\xi(t,\Xi_t^y)
 =N(t,\Xi_t^y)+P(t),
 \qquad 0\le t\le T.
\end{equation}
Consequently, $\mathcal W_t^y\ge P(t)$ for all $t$, and
\begin{equation}\label{eq:welfare-complementarity}
 \int_{[0,T]}e^{-\beta t}
 \bigl(\mathcal W_t^y-P(t)\bigr)
 \,d\mathcal Z_t^{*,y}=0.
\end{equation}
Thus the cumulative multiplier increases only when the welfare constraint binds.
\end{prop}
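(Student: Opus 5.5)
The plan is to show that the process $M_t:=e^{-\beta t}G(t,\Xi_t^y)+\int_0^t e^{-\beta s}u(c_s^y)\,ds$ is a true martingale on $[0,T]$ with terminal value $e^{-\beta T}U_B(X_T^y)+\int_0^Te^{-\beta s}u(c_s^y)\,ds$. Taking conditional expectations then gives $\mathcal W_t^y=G(t,\Xi_t^y)$. The remaining equalities in \eqref{eq:shadow-welfare-identity} follow from \eqref{eq:Q-reconstruction-identities} (once $Q=\widehat Q$ from Theorem~\ref{thm:dual-verification}) and from $G=N+P$ in \eqref{eq:G-def-verification}. The terminal identification is direct. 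At $T$, $G(T,\xi)=R(0,\ln\xi)=\max\{B(\ln\xi),P(T)\}$. Moreover, $U_B(\hat x_B(\xi))=\tilde U_B(\xi)-\xi\tilde U_B'(\xi)$ equals $B(\ln\xi)$ for $\xi\le q_B$ and $P(T)$ otherwise, so $G(T,\Xi_T^y)=U_B(X_T^y)$.

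To obtain the martingale property, I will apply Itô's formula to $e^{-\beta t}G(t,\Xi_t^y)$ with $\Xi^y=\mathcal Y^y/A^y$, where $A^y$ is the nondecreasing running-supremum process of \eqref{eq:Z-star-y-explicit}. Proposition~\ref{prop:construct-Q} gives $G=\widehat Q-\xi\widehat Q_\xi$, and $\widehat Q$ is $C^{1,2}$. However, $G$ itself is only $W^{1,2}_{\infty,\mathrm{loc}}$, since $R\in W^{1,2}_{\infty,\mathrm{loc}}$. I will therefore use the Itô–Krylov formula for Sobolev functions along the nondegenerate diffusion part, or equivalently mollify $G$ and pass to the limit using local boundedness of second derivatives. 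The continuous part of $dA$ contributes $-\Xi G_\xi\,dA^c/A$. At jumps of $A$ (possible at $0$ and $T$) the state moves into the contact region.

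The key observation is that $G_\xi=\xi^{-1}\partial_vR$, and $\partial_vR=0$ on $\{R=p\}$ by Lemma~\ref{lem:state-monotonicity-R} together with the fact that $R-p$ attains its minimum there. Since $dA$ is carried by $\{\Xi=b\}$, which lies in the contact set (Proposition~\ref{prop:dual-shadow-reflection}), the singular term vanishes. At a jump, the state moves from a point with $\Xi\ge b$ (already contact) to $b$, so $G$ stays equal to $P$ and no jump of $M$ occurs. The $dt$ term is $e^{-\beta t}(\partial_t G+\mathcal L G)$ evaluated at $\Xi$. On $\{N>0\}$ this equals $-e^{-\beta t}f(\ln\Xi)=-e^{-\beta t}u(c_t^y)$, because $\frac{1}{1-\gamma}\xi^{1-1/\gamma}=u(\xi^{-1/\gamma})$. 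By \eqref{eq:boundary-zero-occupation}, together with $\Xi\le b$ from \eqref{eq:state-constraint-reflection}, the time spent in the contact set has Lebesgue measure zero almost surely. Hence the drift of $M$ vanishes and $M$ is a local martingale.

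The main obstacle is upgrading the local martingale $M$ to a true martingale, including behavior near $T$ and near $\xi\to0$ or $\infty$. I will bound $|G(t,\xi)|\le C(1+\xi^a)$ using \eqref{eq:R-growth-class}. Then $\sup_t|M_t|$ is dominated by $C(1+\sup_t(\Xi_t^y)^a)+\int_0^T|u(c_s^y)|\,ds$. This dominating quantity is integrable by the moment bounds of Lemma~\ref{lem:reflection-moments}: $\Xi^y\le\mathcal Y^y$, and $\Xi^y\ge\mathcal Y^y/(1+b_*^{-1}\sup\mathcal Y^y)$, which yields all moments of both signs. A localizing sequence of stopping times then converges by dominated convergence, and continuity of $G$ up to $T$ handles the terminal limit.

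Finally, $\mathcal W^y_t\ge P(t)$ follows from $N\ge0$. The complementarity identity \eqref{eq:welfare-complementarity} follows from \eqref{eq:N-complementarity-exact} after substituting $\mathcal W^y_t-P(t)=N(t,\Xi^y_t)$.
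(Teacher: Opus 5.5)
Your proposal is correct and follows essentially the paper's proof: Itô's formula for $e^{-\beta t}G(t,\Xi_t^y)$, justified by mollification and the zero-occupation property \eqref{eq:boundary-zero-occupation}, with the reflection term removed by smooth fit ($G_\xi=0$ on the boundary), jumps confined to the intervention region where $G=P$, and localization removed via the growth bound and the moments of Lemma~\ref{lem:reflection-moments}. The only differences are cosmetic: you read the drift directly off the obstacle equation for $R$ rather than differentiating the reduced HJB for $Q$, and you make the domination of $\sup_t|M_t|$ explicit.
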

\subsection{The map to wealth and the endogenous credit limit}
\label{subsec:dynamic-credit-limit}

To map the dual boundary into wealth space, restart the verified problem at $t\in[0,T]$ and, for $\xi>0$, set
\begin{equation}\label{eq:dynamic-reflected-objects}
 \mathcal Z^{*,t,\xi}:=\mathcal Z^{*,t,\xi,0},\qquad
 \Xi_s^{t,\xi}:=
 \frac{\mathcal Y_s^{t,\xi}}{1+\mathcal Z_s^{*,t,\xi}},
 \qquad t\le s\le T,
\end{equation}
and define
\begin{equation}\label{eq:dynamic-primal-objects}
 c_s^{t,\xi}:=I(\Xi_s^{t,\xi}),\qquad
 X_T^{t,\xi}:=\hat x_B(\Xi_T^{t,\xi}).
\end{equation}
The cost of these claims under the state price density, measured at time $t$, is
\begin{equation}\label{eq:dynamic-budget-map}
 \mathcal B_t(\xi)
 :=\mathbb E_t\!\left[
 \int_t^T \mathcal H_s^t(c_s^{t,\xi}-\epsilon)\,ds
 +\mathcal H_T^tX_T^{t,\xi}
 \right].
\end{equation}
The Markov structure makes $\mathcal B_t(\xi)$ deterministic. Define
\begin{equation}\label{eq:wealth-dual-map}
 x(t,\xi):=-Q_\xi(t,\xi)
\end{equation}
as the image of the normalized dual state $\xi$ in wealth space.

\begin{prop}
\label{prop:dynamic-credit-limit}
Under Assumption~\ref{as:P_finite},
for every $t\in[0,T]$ and $\xi>0$,
\begin{equation}\label{eq:dynamic-budget-envelope}
 \mathcal B_t(\xi)=x(t,\xi)=-Q_\xi(t,\xi).
\end{equation}
The map $\xi\mapsto x(t,\xi)$ is continuous and nonincreasing.  Define the
endogenous time-$t$ credit limit by
\begin{equation}\label{eq:dynamic-credit-limit}
 \underline x(t):=
 \begin{cases}
 -Q_\xi(t,b(t)),& b(t)<+\infty,\\[1ex]
 \displaystyle\lim_{\xi\to\infty}\bigl[-Q_\xi(t,\xi)\bigr],
 & b(t)=+\infty.
 \end{cases}
\end{equation}
Then the following assertions hold.
\begin{enumerate}
\item[(i)] If $t<T$ and $b(t)<+\infty$, then
\begin{equation}\label{eq:credit-limit-collapse}
 x(t,\xi)=\underline x(t),\qquad \xi\ge b(t),
\end{equation}
whereas $x(t,\cdot)$ is strictly decreasing on $(0,b(t))$ and
\begin{equation}\label{eq:credit-limit-boundary-limit}
 \lim_{\xi\to b(t)^-}x(t,\xi)=\underline x(t).
\end{equation}
Thus $\xi=b(t)$ maps to the primal credit limit $x=\underline x(t)$.

\item[(ii)] If $t<T$ and $b(t)=+\infty$, then $x(t,\cdot)$ is strictly
decreasing on $(0,\infty)$ and
\begin{equation}\label{eq:credit-limit-infinite-boundary}
 x(t,\xi)\to\underline x(t)
 \qquad\text{as }\xi\to\infty.
\end{equation}
Hence an infinite dual boundary corresponds to a limiting wealth boundary.

\item[(iii)] At terminal time,
\begin{equation}\label{eq:terminal-wealth-dual-map}
 x(T,\xi)=\hat x_B(\xi),\qquad
 \underline x(T)=\underline x_B,
\end{equation}
and the terminal dual boundary $b(T)=q_B$ maps to the terminal wealth floor
$\underline x_B$.

\item[(iv)] For every $t\in[0,T]$,
\begin{equation}\label{eq:dynamic-credit-limit-lower-bound}
 \underline x(t)
 \ge \ell(t)
 :=-h(t)+e^{-r(T-t)}\underline x_B
 >-h(t).
\end{equation}
If no welfare intervention occurs after $t$ and before the horizon, in the sense that
$b(s)=+\infty$ for every $s\in[t,T)$, then
\begin{equation}\label{eq:credit-limit-no-future-intervention}
 \underline x(t)=\ell(t)
 =-h(t)+e^{-r(T-t)}\underline x_B.
\end{equation}
Thus the absence of an interior welfare boundary does not generally restore the natural debt limit $-h(t)$, because $X_T\ge\underline x_B$ remains binding at the horizon.
\end{enumerate}
Write
\begin{equation}\label{eq:initial-credit-limit-abbrev}
 \underline x:=\underline x(0).
\end{equation}
\end{prop}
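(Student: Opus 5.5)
The plan is to obtain the budget identity \eqref{eq:dynamic-budget-envelope} from the verified dual value by an envelope argument, and then to read off (i)--(iv) from the reconstruction identities \eqref{eq:Q-reconstruction-identities}.

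\textbf{Budget identity.} By Theorem~\ref{thm:dual-verification} and Proposition~\ref{pro:dimension_reduction}, $Q(t,\xi)=\mathfrak J(t,\xi,0)$, and the infimum is attained by $\mathcal Z^{*,t,\xi}$. For $\xi'$ near $\xi$, use $\mathcal Z^{*,t,\xi}$ as a (suboptimal) control at the state $\xi'$. Since $\tilde u$ and $\tilde U_B$ are convex with derivatives $-I$ and $-\hat x_B$, we get
\[
Q(t,\xi')-Q(t,\xi)\le \mathbb E_t\Big[\int_t^T e^{-\beta(s-t)}\big(-c_s^{t,\xi}+\epsilon\big)(\mathcal Y_s^{t,\xi'}-\mathcal Y_s^{t,\xi})\,ds-e^{-\beta(T-t)}X_T^{t,\xi}(\mathcal Y_T^{t,\xi'}-\mathcal Y_T^{t,\xi})\Big]+o(|\xi'-\xi|).
\]
Because $\mathcal Y^{t,\xi'}-\mathcal Y^{t,\xi}=(\xi'-\xi)e^{\beta(s-t)}\mathcal H^t_s$, this is a one-sided supergradient-type inequality. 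We will also exploit the convexity of $\widehat Q$ in $\xi$ (from $\widehat Q_{\xi\xi}\ge0$) and its differentiability (from $C^{1,2}$). Applying the inequality for both $\xi'>\xi$ and $\xi'<\xi$ forces $-Q_\xi(t,\xi)=\mathcal B_t(\xi)$. The interchange of limit and expectation is justified by the moment bounds of Lemma~\ref{lem:reflection-moments}, which give uniform integrability of $c$, $X_T$ and $\mathcal H$. We expect controlling the $o(|\xi'-\xi|)$ term to be the main technical obstacle: it requires dominating the convexity remainder of $\tilde u$ along the fixed control uniformly in $\xi'$ near $\xi$, via the negative moments of $\Xi$.

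\textbf{Monotonicity and (i)--(ii).} The reconstruction identity gives $\partial_\xi x=-\widehat Q_{\xi\xi}=G_\xi/\xi$, and Lemma~\ref{lem:state-monotonicity-R} gives $G_\xi=\xi^{-1}\partial_vR$. Hence $x(t,\cdot)$ is nonincreasing, strictly decreasing where $R>p$ (that is, where $\xi<b(t)$), and constant where $R=p$ (that is, on $[b(t),\infty)$, by Theorem~\ref{thm:contact-geometry-R}). This proves (i), since continuity of $Q_\xi$ gives the boundary limit. For (ii), when $b(t)=\infty$, strict monotonicity holds everywhere and the limit exists by monotonicity; it is finite because of the lower bound in (iv).

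\textbf{Terminal time and (iii)--(iv).} For (iii) we use $Q(T,\cdot)=\tilde U_B$, so $-\tilde U_B'=\hat x_B$. For (iv), the identity gives
\[
x(t,\xi)=\mathcal B_t(\xi)\ge -h(t)+\mathbb E_t[\mathcal H^t_T]\,\underline x_B=\ell(t),
\]
since $c>0$ and $X_T\ge\underline x_B$. If $b\equiv\infty$ on $[t,T)$, the only intervention is possibly at $T$, and then $\Xi_T=\min(\mathcal Y_T,q_B)$. As $\xi\to\infty$, $\mathcal Y_T\to\infty$ a.s., so $c\to0$ and $X_T\to\underline x_B$. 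Monotone or dominated convergence then gives equality with $\ell(t)$.
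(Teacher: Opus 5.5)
Your proposal is correct and follows the paper's proof in all essentials: the envelope identity $Q_\xi=-\mathcal B_t$, strict decrease via $\partial_vR=-\xi^2Q_{\xi\xi}$ and Lemma~\ref{lem:state-monotonicity-R}, flatness of $x(t,\cdot)$ on the contact set $[b(t),\infty)$, the terminal formula $-\widetilde U_B'=\hat x_B$, and the strict lower bound plus dominated convergence in (iv). The only variation is the envelope step, where you bound $Q(t,\cdot)$ from above, with equality at $\xi$, by the dual criterion at $\xi'$ under the fixed optimizer $\mathcal Z^{*,t,\xi}$ and then use the already established differentiability of $Q$, instead of the paper's two-sided bounds with optimizers at $\xi$ and $\xi+h$ plus continuity of the reflected control in $\xi$; note that the upper bound comes from the first-order expansion, not from convexity of $\widetilde u$, and by CRRA homogeneity the remainder is $O(|\xi'-\xi|^2)$ times integrable quantities, so the step you flag as the main obstacle is routine.
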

\begin{cor}
\label{cor:feasible-wealth-domain}
Let $\mathcal A_t(x)$ denote the continuation admissible class on $[t,T]$
obtained from $\mathcal A(x)$ by starting the wealth process from $X_t=x$ and
imposing the welfare constraints on $[t,T]$.  Under Assumption~\ref{as:P_finite},
\begin{equation}\label{eq:feasible-domain-endpoint}
 \inf\{x:\mathcal A_t(x)\neq\varnothing\}=\underline x(t).
\end{equation}
No admissible continuation exists for $x<\underline x(t)$, while each $x>\underline x(t)$ is generated by a unique $\xi\in(0,b(t))$ (interpreted as $(0,\infty)$ if $b(t)=+\infty$). If $t<T$ and $b(t)<\infty$, the endpoint is also attainable, and the feasible set is $[\underline x(t),\infty)$. If $b(t)=\infty$, \eqref{eq:feasible-domain-endpoint} identifies the infimum only. At $T$, the feasible set is $[\underline x_B,\infty)$. In particular, $V(x)=-\infty$ for $x<\underline x(0)$, the endpoint $x=\underline x(0)$ is covered by the preceding assertions, and Theorem~\ref{thm:strong-duality} below treats $x>\underline x(0)$.
\end{cor}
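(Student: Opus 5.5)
The proof has two halves: a lower bound showing no admissible continuation exists below $\underline x(t)$, and an upper bound together with attainability obtained from the explicit dual construction.

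\emph{Infeasibility below $\underline x(t)$.} Take $(c,\pi)\in\mathcal A_t(x)$. By the conditional weak duality stated after Proposition~\ref{pro:weak-duality}, for every $\xi>0$ we have almost surely
\[
P(t)\le \mathcal W_t\le \mathfrak J(t,\xi,0)+\xi x=Q(t,\xi)+\xi x,
\]
where Theorem~\ref{thm:dual-verification} identifies $\mathfrak J(t,\xi,0)$ with $Q(t,\xi)$. Hence $\xi x\ge P(t)-Q(t,\xi)$ for all $\xi>0$.

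Suppose first $b(t)<\infty$. For $\xi\ge b(t)$ we have $N(t,\xi)=0$, so $Q-\xi Q_\xi=P(t)$ there. By Proposition~\ref{prop:dynamic-credit-limit}(i), $Q_\xi(t,\xi)=-\underline x(t)$ on $[b(t),\infty)$. So $Q(t,\xi)$ is affine there, equal to $P(t)-\xi\underline x(t)$. The inequality then reads $\xi x\ge \xi\underline x(t)$, giving $x\ge\underline x(t)$.

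If $b(t)=\infty$, the same inequality gives
\[
x\ge \sup_\xi\frac{P(t)-Q(t,\xi)}{\xi}\ge\limsup_{\xi\to\infty}\frac{-Q(t,\xi)}{\xi},
\]
and this equals $\lim_{\xi\to\infty}\bigl(-Q_\xi(t,\xi)\bigr)=\underline x(t)$. The equality holds because $Q(t,\cdot)$ is convex by \eqref{eq:Q-reconstruction-identities} with finite asymptotic slope: $-Q_\xi$ is nonincreasing and bounded below by $\ell(t)$ via (iv). This is the step I would check most carefully, since it needs the monotone limit of $Q_\xi$ to equal the asymptotic slope $Q/\xi$. That follows from convexity: $Q(t,\xi)/\xi\to\lim Q_\xi$.

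At $t=T$ the constraint $X_T\ge\underline x_B$ gives directly that the feasible set is contained in $[\underline x_B,\infty)$. Attainability of every $x\ge\underline x_B$ is trivial at $T$.

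\emph{Feasibility above $\underline x(t)$ and attainability.} For $x>\underline x(t)$, Proposition~\ref{prop:dynamic-credit-limit}(i)--(ii) says $x(t,\cdot)$ is continuous and strictly decreasing on $(0,b(t))$ with limit $\underline x(t)$ at $b(t)$. Its limit at $0$ is $+\infty$, by \eqref{eq:Q-growth-explicit} and the explicit $Q^0$ term $\xi^{a-1}$. Hence there is a unique $\xi\in(0,b(t))$ with $x(t,\xi)=x$; distinct $\xi$ in this interval give distinct wealth, which yields uniqueness.

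Take the claims $c^{t,\xi}$ and $X_T^{t,\xi}$ from \eqref{eq:dynamic-primal-objects}. Their cost is $\mathcal B_t(\xi)=x$ by \eqref{eq:dynamic-budget-envelope}. Market completeness and martingale representation then give a portfolio $\pi$ financing them, with $X_s+h(s)$ equal to the conditional price of future net claims. Since $c>0$ and $X_T\ge\underline x_B>0$, this gives $X_s>-h(s)$.

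The moment bounds of Lemma~\ref{lem:reflection-moments} give integrability condition (b). The welfare constraint holds by the conditional analogue of Proposition~\ref{prop:shadow-welfare-identity}, namely $\mathcal W_s=G(s,\Xi_s^{t,\xi})\ge P(s)$. So $\mathcal A_t(x)\ne\varnothing$.

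For the endpoint when $t<T$ and $b(t)<\infty$, use $\xi=b(t)$: by \eqref{eq:credit-limit-collapse} its cost is $\underline x(t)$, and the same construction gives an admissible strategy. This yields the feasible set $[\underline x(t),\infty)$, since larger wealth is feasible by the above.

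Finally, $V(x)=-\infty$ for $x<\underline x(0)$ follows from emptiness of $\mathcal A(x)$, using the convention $\sup\varnothing=-\infty$.
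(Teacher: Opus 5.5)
Your proof is correct and follows essentially the paper's route. Conditional weak duality together with the asymptotic slope $-\underline x(t)$ of $Q(t,\cdot)$ rules out $x<\underline x(t)$: you combine the bound with $\mathcal W_t\ge P(t)$ and divide by $\xi$, where the paper uses finiteness of $\mathcal W_t$ and $Q(t,\xi)+\xi x\to-\infty$. Feasibility and the endpoint then come from the bijection $\xi\mapsto x(t,\xi)$ on $(0,b(t))$, the reconstruction on $[t,T]$, and the choice $\xi=b(t)$. One small correction: \eqref{eq:Q-growth-explicit} is only an upper bound, so the blow-up $x(t,\xi)\to+\infty$ as $\xi\to0^+$ should come either from \eqref{eq:Q-constructive} with $D\ge0$, which gives $-Q_\xi(t,\xi)\ge K(t)\xi^{-1/\gamma}-h(t)$, or from the consumption lower bound in the proof of Proposition~\ref{prop:dual-envelope-budget}, as the paper does.
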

\subsection{Initial budget map and the scalar dual multiplier}

For the original problem at time $0$, define
\begin{equation}\label{eq:budget-map-B}
 \mathcal B(y)
 :=\mathbb E\!\left[
 \int_0^T \mathcal H_t(c_t^y-\epsilon)\,dt
 +\mathcal H_T X_T^y
 \right]
 =\mathcal B_0(y).
\end{equation}
By Proposition~\ref{prop:dynamic-credit-limit},
\begin{equation}\label{eq:initial-budget-Qxi}
 \mathcal B(y)=-Q_\xi(0,y).
\end{equation}

\begin{prop}
\label{prop:dual-envelope-budget}
The dual value $J(y)=Q(0,y)$ is continuously differentiable and
\begin{equation}\label{eq:Jprime-budget}
 J'(y)=-\mathcal B(y).
\end{equation}
Moreover, $\mathcal B$ is continuous and nonincreasing, is strictly decreasing
whenever $y<b(0)$, and
\begin{equation}\label{eq:endogenous-credit-limit}
 \lim_{y\to\infty}\mathcal B(y)=\underline x(0)=:\underline x,
\end{equation}
with
\begin{equation}\label{eq:credit-limit-lower-bound}
 \underline x\ge-h(0)+e^{-rT}\underline x_B>-h(0).
\end{equation}
Furthermore,
\begin{equation}\label{eq:budget-map-zero-limit}
 \mathcal B(y)\longrightarrow+\infty
 \qquad\text{as }y\to0^+.
\end{equation}
Consequently, for every $x>\underline x$ there exists a unique
$y^*=y^*(x)>0$ satisfying
\begin{equation}\label{eq:y-star-budget}
 \mathcal B(y^*)=x,
\end{equation}
or equivalently
\begin{equation}\label{eq:y-star-budget-FOC}
 J'(y^*)+x=0.
\end{equation}
This $y^*$ is the global minimizer of $J(y)+yx$.
\end{prop}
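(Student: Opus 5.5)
The proof reduces everything to the function $Q$ and to earlier results. By Theorem~\ref{thm:dual-verification}, $J(y)=Q(0,y)$. Proposition~\ref{prop:construct-Q} gives $Q=\widehat Q\in C^{1,2}([0,T)\times(0,\infty))$, with one-sided time derivatives at $0$, so $y\mapsto Q(0,y)$ is $C^1$ (indeed $C^2$). Combining this with \eqref{eq:initial-budget-Qxi} yields $J'(y)=Q_\xi(0,y)=-\mathcal B(y)$.

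Continuity and monotonicity of $\mathcal B$ then follow directly from Proposition~\ref{prop:dynamic-credit-limit} at $t=0$: the map $\xi\mapsto x(0,\xi)$ is continuous and nonincreasing, and strictly decreasing on $(0,b(0))$ by parts (i)--(ii). The limit \eqref{eq:endogenous-credit-limit} splits into two cases.
\begin{itemize}
\item If $b(0)<\infty$, then $\mathcal B$ is constant equal to $\underline x(0)$ on $[b(0),\infty)$ by \eqref{eq:credit-limit-collapse}.
\item If $b(0)=\infty$, the limit is the definition \eqref{eq:dynamic-credit-limit}.
\end{itemize}
The lower bound \eqref{eq:credit-limit-lower-bound} is part (iv) at $t=0$.

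The only genuinely new input is \eqref{eq:budget-map-zero-limit}, and this is the step I expect to be the main obstacle. I would argue from below:
\[
\mathcal B(y)\ge \mathbb E\Big[\int_0^T\mathcal H_t c_t^y\,dt\Big]-h(0).
\]
Since $A^y\ge1$, we have $\Xi_t^y\le \mathcal Y_t^y=y e^{\beta t}\mathcal H_t$, and hence
\[
c_t^y=(\Xi_t^y)^{-1/\gamma}\ge y^{-1/\gamma}\big(e^{\beta t}\mathcal H_t\big)^{-1/\gamma}.
\]
Therefore
\[
\mathcal B(y)\ge y^{-1/\gamma}\,\mathbb E\Big[\int_0^T \mathcal H_t^{1-1/\gamma}e^{-\beta t/\gamma}\,dt\Big]-h(0).
\]
The expectation is a finite positive constant, because $\mathcal H$ is lognormal, so the right-hand side tends to $+\infty$ as $y\to0^+$. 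This bound avoids the reflection entirely, since the multiplier only raises consumption.

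Existence and uniqueness of $y^*$ then follow. For $x>\underline x$, the continuous map $\mathcal B$ takes values from $+\infty$ (as $y\to0^+$) down to $\underline x$ (as $y\to\infty$), so by the intermediate value theorem some $y$ satisfies $\mathcal B(y)=x$. Uniqueness holds because $\mathcal B>\underline x$ can only occur where $y<b(0)$: on $[b(0),\infty)$ we have $\mathcal B=\underline x<x$. On $(0,b(0))$, $\mathcal B$ is strictly decreasing, so at most one solution exists. The condition $\mathcal B(y^*)=x$ is equivalent to $J'(y^*)+x=0$.

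For global minimality, the function $y\mapsto J(y)+yx$ has derivative $x-\mathcal B(y)$. This derivative is negative for $y<y^*$ and positive for $y>y^*$, using $\mathcal B(y)\le\underline x<x$ beyond $b(0)$. Hence the function decreases then increases, and $y^*$ is its global minimizer. Equivalently, $J$ is convex because $J'=-\mathcal B$ is nondecreasing.
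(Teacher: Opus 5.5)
Your proposal is correct and follows essentially the same route as the paper. Everything except the limit as $y\to0^+$ is read off from Proposition~\ref{prop:dynamic-credit-limit} at $t=0$. That limit comes from $1+\mathcal Z^{*,y}\ge1$, the bound $c_t^y\ge y^{-1/\gamma}(e^{\beta t}\mathcal H_t)^{-1/\gamma}$, and lognormal moments, and minimality follows from the sign change of $x-\mathcal B(y)$ at $y^*$. You are only slightly more explicit than the paper about the bounded income and terminal terms and about the case split at $b(0)$.
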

\subsection{Wealth and portfolio reconstruction}

Fix $x>\underline x$ and choose $y^*$ according to
\eqref{eq:y-star-budget}.  Write
\begin{equation}\label{eq:optimal-primal-from-y}
 c^*:=c^{y^*},\qquad X_T^*:=X_T^{y^*},\qquad
 \mathcal Z^*:=\mathcal Z^{*,y^*},\qquad
 \Xi^*:=\Xi^{y^*}.
\end{equation}
Define
\begin{equation}\label{eq:replicating-wealth}
 X_t^*
 :=\frac{1}{\mathcal H_t}
 \mathbb E_t\!\left[
 \int_t^T \mathcal H_s(c_s^*-\epsilon)\,ds
 +\mathcal H_T X_T^*
 \right],
 \qquad 0\le t\le T.
\end{equation}
By \eqref{eq:y-star-budget}, $X_0^*=x$.

\begin{prop}
\label{prop:wealth-credit-reconstruction}
For the verified primal and dual pair,
\begin{equation}\label{eq:wealth-Qxi-representation}
 X_t^*=-Q_\xi(t,\Xi_t^*)=x(t,\Xi_t^*),
 \qquad 0\le t\le T.
\end{equation}
\begin{equation}\label{eq:optimal-wealth-credit-limit}
 X_t^*\ge\underline x(t)\ge
 -h(t)+e^{-r(T-t)}\underline x_B>-h(t),
 \qquad 0\le t<T,
\end{equation}
with $X_T^*\ge\underline x_B=\underline x(T)$.  Moreover,
\begin{equation}\label{eq:wealth-credit-complementarity}
 \int_{[0,T]}
 \bigl(X_t^*-\underline x(t)\bigr)\,d\mathcal Z_t^*=0.
\end{equation}
Thus the cumulative multiplier acts only at the endogenous credit limit $X_t^*=\underline x(t)$, corresponding to $\Xi_t^*=b(t)$.
\end{prop}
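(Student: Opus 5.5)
The plan is to identify $X_t^*$ with the dynamic budget map of Proposition~\ref{prop:dynamic-credit-limit}, evaluated at the current normalized dual state $\Xi_t^*$. That requires a flow (restart) property of the reflected multiplier.

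\textbf{Step 1: restart property.} Fix $t$. By \eqref{eq:Z-star-y-explicit} and the multiplicative structure $\mathcal Y_s^{y^*}=\mathcal Y_t^{y^*}\,e^{\beta(s-t)}\mathcal H_s^t$, for $s\ge t$
\[
A_s^{y^*}=A_t^{y^*}\vee\sup_{t\le u\le s}\frac{\mathcal Y_u^{y^*}}{b(u)}.
\]
Dividing by $A_t^{y^*}$ gives
\[
\frac{A_s^{y^*}}{A_t^{y^*}}=1\vee\sup_{t\le u\le s}\frac{\mathcal Y_u^{t,\Xi_t^*}}{b(u)}=1+\mathcal Z_s^{*,t,\Xi_t^*},
\]
where $\mathcal Y^{t,\Xi_t^*}$ is driven by the increments of $B$ after $t$ and started at the $\mathcal F_t$-measurable value $\Xi_t^*$. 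This step uses $\Xi_t^*\le b(t)$, from \eqref{eq:state-constraint-reflection}, so that no extra jump occurs at $t$. Hence $\Xi_s^*=\Xi_s^{t,\Xi_t^*}$ for $s\ge t$. It follows that $c_s^*=c_s^{t,\Xi_t^*}$ and $X_T^*=X_T^{t,\Xi_t^*}$.

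\textbf{Step 2: representation of wealth.} Insert the identities of Step 1 into \eqref{eq:replicating-wealth}. By the independence of Brownian increments and the Markov property (a freezing lemma, justified by the uniform moment bounds of Lemma~\ref{lem:reflection-moments}), the conditional expectation equals $\mathcal B_t(\xi)$ evaluated at $\xi=\Xi_t^*$. Proposition~\ref{prop:dynamic-credit-limit} then gives $X_t^*=x(t,\Xi_t^*)=-Q_\xi(t,\Xi_t^*)$. At $t=T$ this reads $X_T^*=\hat x_B(\Xi_T^*)$, consistent with part (iii) of that proposition.

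\textbf{Step 3: lower bounds.} Since $\Xi_t^*\le b(t)$ and $x(t,\cdot)$ is nonincreasing, we get $X_t^*\ge \underline x(t)$. In the case $b(t)=\infty$ this holds because $x(t,\cdot)$ decreases to the limit $\underline x(t)$. Part (iv) of Proposition~\ref{prop:dynamic-credit-limit} supplies the remaining inequalities in \eqref{eq:optimal-wealth-credit-limit}. At $T$ the bound is immediate from the definition of $\hat x_B$.

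\textbf{Step 4: complementarity.} By \eqref{eq:Skorokhod-support}, $d\mathcal Z^*$ charges only times with $\Xi_t^*=b(t)$, and there $b(t)<\infty$ because the ratio is finite. For $t<T$, part (i) of Proposition~\ref{prop:dynamic-credit-limit} gives $x(t,b(t))=\underline x(t)$. For a jump at $T$, the state after the jump is $q_B$ and $\hat x_B(q_B)=\underline x_B=\underline x(T)$. In both cases the integrand vanishes $d\mathcal Z^*$-a.e., which proves \eqref{eq:wealth-credit-complementarity}.

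\textbf{Main obstacle.} The delicate point is Step 2: passing from the conditional expectation to the deterministic function $\mathcal B_t$ evaluated at a random argument. This needs measurability and joint continuity of $\xi\mapsto \mathcal B_t(\xi)$, which follows from $\mathcal B_t=-Q_\xi$ together with the regularity in Proposition~\ref{prop:construct-Q}. It also needs integrability uniform on compact sets of $\xi$, which comes from the moment bounds of Lemma~\ref{lem:reflection-moments}. Special care is needed near times where $b$ is infinite, where I would use the convention that the ratio is zero.
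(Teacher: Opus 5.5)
Your proposal is correct and follows the paper's proof essentially step for step. It uses the same restart identity $A_s^*/A_t^*=1\vee\sup_{t\le u\le s}\mathcal Y_u^{t,\Xi_t^*}/b(u)$, the same identification $X_t^*=\mathcal B_t(\Xi_t^*)=-Q_\xi(t,\Xi_t^*)$ via Proposition~\ref{prop:dynamic-credit-limit}, the same lower bound from $\Xi_t^*\le b(t)$ and the monotonicity of $x(t,\cdot)$, and the same complementarity argument from the Skorokhod support property together with the collapse of $x(t,\cdot)$ at $b(t)$. Your freezing-lemma justification and your separate treatment of a terminal atom via $\hat x_B(q_B)=\underline x_B$ spell out details that the paper's proof leaves implicit.
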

In particular,
\begin{equation}\label{eq:natural-debt-reconstruction}
 X_t^*+h(t)
 =\frac{1}{\mathcal H_t}
 \mathbb E_t\!\left[
 \int_t^T \mathcal H_s c_s^*\,ds
 +\mathcal H_T X_T^*
 \right]>0,
\end{equation}
so the original natural borrowing constraint is satisfied strictly.

To identify the portfolio, let
\begin{equation}\label{eq:martingale-reconstruction}
 M_t^X
 :=\mathbb E_t\!\left[
 \int_0^T\mathcal H_s(c_s^*-\epsilon)\,ds
 +\mathcal H_T X_T^*
 \right].
\end{equation}
Then
\begin{equation}\label{eq:MX-HX}
 M_t^X
 =\mathcal H_tX_t^*
 +\int_0^t\mathcal H_s(c_s^*-\epsilon)\,ds.
\end{equation}
Since the market filtration is Brownian, the martingale representation theorem
gives a progressively measurable process $\varphi$ such that
\begin{equation}\label{eq:MX-representation}
 dM_t^X=\varphi_t\,dB_t.
\end{equation}
Define
\begin{equation}\label{eq:pi-star-explicit}
 \pi_t^*
 :=\frac{1}{\sigma}
 \left(\frac{\varphi_t}{\mathcal H_t}+\theta X_t^*\right).
\end{equation}
It\^o's formula with $d\mathcal H_t=-r\mathcal H_tdt-\theta\mathcal H_t dB_t$ then gives \eqref{eq:wealth_finite}. Equivalently, comparison of diffusion coefficients in
$X_t^*=-Q_\xi(t,\Xi_t^*)$ yields
\begin{equation}\label{eq:portfolio-feedback-baseline}
\pi_t^*=
\frac{\theta}{\sigma}\Xi_t^*Q_{\xi\xi}(t,\Xi_t^*)
=-\frac{\theta}{\sigma}N_\xi(t,\Xi_t^*)
\quad dt\otimes d\mathbb P\text{-a.e.}
\end{equation}
The continuous feedback extension equals zero at a finite credit boundary
by smooth fit. Lemma~\ref{lem:reflection-moments} and the martingale
representation establish portfolio admissibility in the proof below.

\subsection{Strong duality and primal optimality}

\begin{thm}\label{thm:strong-duality}
Let $x>\underline x=\underline x(0)$ and choose $y^*>0$ satisfying
\eqref{eq:y-star-budget}.  Then
$(c^*,\pi^*)\in\mathcal A(x)$ and is optimal for
Problem~\ref{pr:main_finite}.  The budget constraint binds, the welfare
constraint satisfies \eqref{eq:welfare-complementarity}, the optimal wealth satisfies
Proposition~\ref{prop:wealth-credit-reconstruction}, and
\begin{equation}\label{eq:strong-duality-final}
 V(x)
 =J(y^*)+y^*x
 =\inf_{y>0}\{J(y)+yx\}.
\end{equation}
The optimal consumption and terminal wealth are
\begin{equation}\label{eq:optimal-controls-final}
 c_t^*=I\!\left(
 \frac{\mathcal Y_t^{y^*}}{1+\mathcal Z_t^*}
 \right),
 \qquad
 X_T^*=\hat x_B\!\left(
 \frac{\mathcal Y_T^{y^*}}{1+\mathcal Z_T^*}
 \right),
\end{equation}
where the cumulative multiplier is given explicitly by
\begin{equation}\label{eq:Z-star-final}
 \mathcal Z_t^*
 =\left(1\vee\sup_{0\le s\le t}
 \frac{\mathcal Y_s^{y^*}}{b(s)}\right)-1. 
\end{equation}
The optimal consumption and terminal wealth, and hence the optimal wealth and portfolio processes, are unique.
\end{thm}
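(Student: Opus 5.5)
The proof splits into three parts: admissibility of $(c^*,\pi^*)$, the equality $V(x)=J(y^*)+y^*x$ obtained by closing the weak duality gap of Proposition~\ref{pro:weak-duality}, and uniqueness.

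\emph{Admissibility.} By Proposition~\ref{prop:wealth-credit-reconstruction}, the process $X^*$ defined in \eqref{eq:replicating-wealth} together with $\pi^*$ from \eqref{eq:pi-star-explicit} solves \eqref{eq:wealth_finite} with $X_0^*=x$. Condition (d) follows from \eqref{eq:natural-debt-reconstruction}. Condition (a) and the integrability condition (b) follow from the moment bounds in Lemma~\ref{lem:reflection-moments}: these give finite positive and negative moments of $c^*$ and finite positive moments of $X_T^*$, and $X_T^*\ge\underline x_B>0$ makes $|U_B(X_T^*)|$ integrable. For the $L^2$ property of $\pi^*$, I would use the martingale representation of $M^X$, whose terminal value lies in every $L^m$ by H\"older's inequality with $\mathcal H$. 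The continuation value of the candidate is $\mathcal W^{y^*}$, so Proposition~\ref{prop:shadow-welfare-identity} gives $\mathcal W_t^{y^*}\ge P(t)$ for all $t$. This is condition (c).

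\emph{Gap closing.} Next I would revisit the weak duality inequality with $y=y^*$ and $\mathcal Z=\mathcal Z^*$, and track its three sources of slack. First, the pointwise Fenchel inequalities hold with equality, because $c^*$ and $X_T^*$ are exactly the maximizers \eqref{eq:cstar-xTstar}. Second, the budget slack vanishes, because $\mathcal B(y^*)=x$ makes the budget constraint \eqref{eq:budget_finite} bind. Third, the welfare term $\mathbb E\int_{[0,T]}e^{-\beta t}(\mathcal W_t^*-P(t))\,d\mathcal Z_t^*$ vanishes by \eqref{eq:welfare-complementarity}.

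The step needing care is converting the Lagrangian welfare term into $\mathbb E\int e^{-\beta t}(1+\mathcal Z_t)u(c_t)\,dt+e^{-\beta T}(1+\mathcal Z_T)U_B(X_T)$. This requires a Fubini/optional projection identity, namely $\mathbb E\int_{[0,T]} e^{-\beta t}\mathcal W_t\,d\mathcal Z_t = \mathbb E[\int_0^T e^{-\beta s}\mathcal Z_s u(c_s)ds + e^{-\beta T}\mathcal Z_T U_B(X_T)]$. I would justify it by integration by parts of $e^{-\beta t}\mathcal W_t\mathcal Z_t$, using \eqref{eq:W_dynamics_finite}, and check that the stochastic integral is a true martingale; the moment bounds on $A^*$ and on $c^*$, $X_T^*$ handle this. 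Once this identity holds, the primal objective equals $\mathfrak L(y^*,\mathcal Z^*)+y^*x$. By Theorem~\ref{thm:dual-verification}, $\mathfrak L(y^*,\mathcal Z^*)=J(y^*)$. Combining this with \eqref{eq:weak_duality_finite} and the minimizing property of $y^*$ in Proposition~\ref{prop:dual-envelope-budget} gives \eqref{eq:strong-duality-final} and the optimality of $(c^*,\pi^*)$. Formulas \eqref{eq:optimal-controls-final} and \eqref{eq:Z-star-final} are then restatements of the definitions.

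\emph{Uniqueness.} I expect this to be the main obstacle, because the welfare constraints are not convex in the control in an obvious way. I would first try to get around this through the dual. If $(c,\pi)$ is another optimal strategy, the chain of inequalities in weak duality must hold with equality at $(y^*,\mathcal Z^*)$. In particular, $(1+\mathcal Z_t^*)u(c_t)-\mathcal Y_t c_t$ attains its supremum for a.e.\ $t$, almost surely. Strict concavity of $u$ then forces $c=c^*$ $dt\otimes d\mathbb P$-a.e. For terminal wealth, $U_B(\cdot)-qx$ restricted to $[\underline x_B,\infty)$ has a unique maximizer, so $X_T=X_T^*$. The wealth processes then coincide by the martingale representation, and $\pi=\pi^*$ follows by identifying the diffusion coefficients.
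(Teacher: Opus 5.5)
Your proof is correct, but it closes the duality gap and proves uniqueness by different routes than the paper.

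\textbf{Value identity.} The paper does not revisit the Lagrangian. It applies Proposition~\ref{prop:shadow-welfare-identity} at $t=0$, notes that $y^*<b(0)$ rules out an initial jump so $\Xi_0^*=y^*$, and computes $\mathcal W_0^{y^*}=G(0,y^*)=Q(0,y^*)-y^*Q_\xi(0,y^*)=J(y^*)+y^*\mathcal B(y^*)=J(y^*)+y^*x$. This avoids the optional-projection identity for the unbounded multiplier $\mathcal Z^*$ altogether. Your complementary-slackness argument does need that identity. Your integration by parts of $e^{-\beta t}\mathcal W_t\mathcal Z_t$, with BDG and the moments of $A^*$, $c^*$, $X_T^*$, supplies it. In exchange, yours is the generic saddle-point verification and uses neither $G=Q-\xi Q_\xi$ nor the absence of an initial jump.

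\textbf{Uniqueness.} The paper argues in the primal. Your worry about convexity is unfounded: the welfare constraint is convex, because $\mathcal W_t$ is concave in $(c,X_T)$ and $X_T$ is affine in the controls. Strict concavity of $u$ and $U_B$ on the convex admissible class then gives uniqueness at once.

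Your dual argument also works and says a little more, namely that every optimizer is the Fenchel maximizer against $(y^*,\mathcal Z^*)$. However, for a competitor satisfying only (b) you cannot write the chain directly at the unbounded $\mathcal Z^*$. For $\gamma>1$, for instance, $\mathbb E\int_0^T A_t^*|u(c_t)|\,dt$ need not be finite a priori. The fix is as follows.
\begin{itemize}
\item Run the chain with $\mathcal Z^{(n)}=\min\{\mathcal Z^*,n\}$.
\item The Fenchel, budget and welfare slacks are nonnegative, and their sum equals $\mathfrak L(y^*,\mathcal Z^{(n)})+y^*x-\mathcal W_0$, which tends to $0$ by the dominated convergence already used in Proposition~\ref{pro:weak-duality}.
\item Since $\mathcal Z^{(n)}=\mathcal Z^*$ pathwise for large $n$, Fatou gives a zero Fenchel gap at $A^*$, hence $c=c^*$ and $X_T=X_T^*$.
\item The zero budget slack makes the competitor's deflated wealth plus cumulative deflated consumption a true martingale. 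This, rather than martingale representation alone, identifies its wealth with $X^*$ and then $\pi$ with $\pi^*$.
\end{itemize}
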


\section{Stochastic Income and Limited Commitment}\label{sec:implications}

This section develops the limited commitment interpretation of the welfare floor described in the introduction. We show that an outside option based on labor income preserves the reduction to a one dimensional free boundary problem and turns the welfare boundary into an endogenous credit limit.

\subsection{Stochastic income and an outside option based on income}

The welfare constraint has a direct interpretation in terms of household credit
once labor income is stochastic and the reservation value is tied to the
agent's current income.  We follow the complete market formulation of
\citet{ChoiKooLimYoo2024}.  Let labor income satisfy
\begin{equation}\label{eq:income-gbm-imp}
\frac{dI_t}{I_t}
=\mu_I\,dt+\sigma_I\Bigl(\rho_I\,dB_t^1
+\sqrt{1-\rho_I^2}\,dB_t^2\Bigr),
\qquad I_0>0,
\end{equation}
where the stock is driven by $B^1$.  As in \citet{ChoiKooLimYoo2024}, we
assume that a traded contract spans the $B^2$-component of income risk with unit diffusion exposure and zero excess drift. Thus the market is complete, and its vector of market prices of risk is $(\theta,0)$.  If $f_t$ denotes the dollar exposure to this contract, the
wealth dynamics become
\begin{equation}\label{eq:wealth-stoch-income-imp}
dX_t=\bigl(rX_t+\pi_t(\mu-r)-c_t+I_t\bigr)dt
+\sigma\pi_t\,dB_t^1+f_t\,dB_t^2.
\end{equation}

Under CRRA preferences, a consumption plan proportional to income generates a
welfare value proportional to $I_t^{1-\gamma}$ and not to
$I_t$ itself.  The homogeneous counterpart of a restriction of the form ``$P(t)$ times current
income'' is therefore
\begin{equation}\label{eq:income-scaled-welfare-imp}
\mathcal W_t\ge \mathcal O_t=I_t^{1-\gamma}P_\delta(t).
\end{equation}
This is the scaling compatible with CRRA utility and with the reduction to the wealth to income ratio.

A natural specification takes $\mathcal O_t$ to be the value of an
outside option under which the agent consumes a fraction $\delta\in(0,1]$ of
labor income after exit and receives the terminal resource $\underline w_B I_T$ from the outside arrangement,
\begin{equation}\label{eq:outside-value-imp}
\mathcal O_t
:=\mathbb E_t\!\left[
\int_t^T e^{-\beta(s-t)}u(\delta I_s)\,ds
+e^{-\beta(T-t)}u(\kappa_B\underline w_B I_T)
\right].
\end{equation}
This is the finite horizon analogue of the reservation utility based on income in
\citet{ChoiKooLimYoo2024}.  When $0<\gamma<1$, both the agent's
continuation utility and the outside value are positive, so the welfare
constraint has economic content and does not hold trivially because of the sign
of CRRA utility.

Set
\begin{equation}\label{eq:transformed-beta-imp}
\widehat\beta
:=\beta-(1-\gamma)\mu_I
+\frac{\gamma(1-\gamma)}{2}\sigma_I^2.
\end{equation}
Then \eqref{eq:outside-value-imp} has the homogeneous form
\eqref{eq:income-scaled-welfare-imp}, where, with $\tau=T-t$,
\begin{equation}\label{eq:Pdelta-imp}
P_\delta(t)
=\frac{1}{1-\gamma}
\left[
\delta^{1-\gamma}\frac{1-e^{-\widehat\beta\tau}}{\widehat\beta}
+(\kappa_B\underline w_B)^{1-\gamma}e^{-\widehat\beta\tau}
\right],
\end{equation}
with the usual limiting interpretation when $\widehat\beta=0$.  Notice that
$(1-\gamma)P_\delta(t)>0$ for every $\gamma>0$, $\gamma\neq1$.

\subsection{Reduction to the baseline problem}

Define the wealth and consumption ratios
\[
w_t:=\frac{X_t}{I_t},\qquad m_t:=\frac{c_t}{I_t},
\]
and the portfolio coordinates net of income hedging
\[
\widetilde\pi_t
:=\frac{\pi_t}{I_t}-\frac{\sigma_I\rho_I}{\sigma}w_t,
\qquad
\widetilde f_t
:=\frac{f_t}{I_t}-\sigma_I\sqrt{1-\rho_I^2}\,w_t.
\]
Write $\boldsymbol\eta=(\sigma_I\rho_I,\sigma_I\sqrt{1-\rho_I^2})$ and
define the equivalent probability measure
\begin{equation}\label{eq:income-measure-change}
\frac{d\widehat{\mathbb P}}{d\mathbb P}\bigg|_{\mathcal F_t}
=\exp\!\left((1-\gamma)\boldsymbol\eta\cdot\boldsymbol B_t
-\frac12(1-\gamma)^2\sigma_I^2t\right).
\end{equation}
Then $\widehat{\boldsymbol B}_t=\boldsymbol B_t-(1-\gamma)\boldsymbol\eta t$
is Brownian under $\widehat{\mathbb P}$. For any admissible ratio plan,
conditional change of measure gives
\begin{equation}\label{eq:income-utility-reduction}
\frac{\mathcal W_t}{I_t^{1-\gamma}}
=\widehat{\mathbb E}_t\!\left[
\int_t^T e^{-\widehat\beta(s-t)}u(m_s)\,ds
+e^{-\widehat\beta(T-t)}U_B(w_T)\right].
\end{equation}
With $\boldsymbol e_t=(\sigma\widetilde\pi_t,\widetilde f_t)$ and
$\boldsymbol\vartheta=(\theta-\gamma\sigma_I\rho_I,
-\gamma\sigma_I\sqrt{1-\rho_I^2})$, It\^o's formula yields
\begin{equation}\label{eq:income-ratio-dynamics}
dw_t=\{r_Iw_t+\boldsymbol\vartheta\cdot\boldsymbol e_t-m_t+1\}\,dt
+\boldsymbol e_t\cdot d\widehat{\boldsymbol B}_t.
\end{equation}
The transformed parameters are
\begin{equation}\label{eq:transformed-parameters-imp}
\begin{aligned}
r_I&:=r-\mu_I+\theta\sigma_I\rho_I,\\
\Theta_\gamma
&:=\left[(\theta-\gamma\sigma_I\rho_I)^2
+\gamma^2\sigma_I^2(1-\rho_I^2)\right]^{1/2}.
\end{aligned}
\end{equation}
\begin{prop}\label{prop:income-reduction}
Assume $\widehat\beta>0$ and $\Theta_\gamma>0$. Call $(c,\pi,f)$ admissible if $(m,\widetilde\pi,\widetilde f)$ is progressively measurable with $\int_0^T(m_t+\widetilde\pi_t^2+\widetilde f_t^2)\,dt<\infty$ a.s., the integrability condition \eqref{eq:admissible_finite_2} holds for $(m,w_T)$ under $\widehat{\mathbb P}$ with discount rate $\widehat\beta$, $\mathcal W_t\ge\mathcal O_t$ for all $t$, and $X_t>-h_I(t)I_t$ for all $t$, with $h_I$ as in \eqref{eq:natural-credit-ratio-imp} below. Then, in the coordinates $(w,m,\widetilde\pi,\widetilde f)$ and under $\widehat{\mathbb P}$, the problem is the baseline problem of Sections~\ref{sec:model}--\ref{sec:verification-duality} with the substitutions $\epsilon\to1$, $\beta\to\widehat\beta$, $r\to r_I$, $\theta\to\Theta_\gamma$, $P\to P_\delta$, $\underline x_B\to\underline w_B$, driven by the one dimensional Brownian motion $\boldsymbol\vartheta\cdot\widehat{\boldsymbol B}/\Theta_\gamma$. Consequently all results of Sections~\ref{sec:parabolic-obstacle}--\ref{sec:verification-duality} apply to it.
\end{prop}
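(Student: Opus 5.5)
The proof is a change of variables followed by a check that each ingredient of the baseline problem is carried over exactly. I will do it in three steps: scale the welfare objects, derive the ratio dynamics, and match the admissibility conditions.

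\emph{Step 1: welfare scaling.} Write $I_s=I_t\exp\{(\mu_I-\tfrac12\sigma_I^2)(s-t)+\boldsymbol\eta\cdot(\boldsymbol B_s-\boldsymbol B_t)\}$. Then
\[
I_s^{1-\gamma}/I_t^{1-\gamma}=e^{(1-\gamma)(\mu_I-\frac12\sigma_I^2)(s-t)}\,e^{\frac12(1-\gamma)^2\sigma_I^2(s-t)}\,\frac{Z_s}{Z_t},
\]
where $Z$ is the density process in \eqref{eq:income-measure-change}. The deterministic exponent equals $(\beta-\widehat\beta)(s-t)$ by \eqref{eq:transformed-beta-imp}.
\begin{itemize}
\item Since $u(m_sI_s)=I_s^{1-\gamma}u(m_s)$ and $U_B(X_T)=I_T^{1-\gamma}U_B(w_T)$, the conditional Bayes formula gives \eqref{eq:income-utility-reduction}. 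Fubini for the running integral is justified by the integrability condition under $\widehat{\mathbb P}$.
\item Applying the same computation to $\delta$ and $\underline w_B$ gives $\mathcal O_t=I_t^{1-\gamma}P_\delta(t)$ with $P_\delta$ as in \eqref{eq:Pdelta-imp}.
\item Hence $\mathcal W_t\ge\mathcal O_t$ is equivalent to $\widehat{\mathcal W}_t\ge P_\delta(t)$, where $\widehat{\mathcal W}$ is the baseline continuation value with discount rate $\widehat\beta$.
\item $P_\delta\in C^\infty$ and $(1-\gamma)P_\delta>0$, so Assumption~\ref{as:P_finite} holds.
\item The terminal floor is $\underline w_B$, since $U_B(\underline w_B)=P_\delta(T)$.
\end{itemize}

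\emph{Step 2: ratio dynamics.} Apply It\^o's formula to $X_t/I_t$, substitute the definitions of $\widetilde\pi,\widetilde f$, and rewrite $d\boldsymbol B=d\widehat{\boldsymbol B}+(1-\gamma)\boldsymbol\eta\,dt$. Collecting drift terms produces $r_I$ and $\boldsymbol\vartheta$, which gives \eqref{eq:income-ratio-dynamics}; this is a direct check. Then decompose $\boldsymbol e_t=\alpha_t\boldsymbol\vartheta/\Theta_\gamma+\boldsymbol e_t^\perp$ and set $\widehat B:=\boldsymbol\vartheta\cdot\widehat{\boldsymbol B}/\Theta_\gamma$, a one-dimensional $\widehat{\mathbb P}$-Brownian motion. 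With the scalar control $\sigma$-scaled from $\alpha_t$, the dynamics become those of \eqref{eq:wealth_finite} with $\epsilon\to1$, $r\to r_I$, $\theta\to\Theta_\gamma$, plus an orthogonal noise $\boldsymbol e^\perp\cdot d\widehat{\boldsymbol B}$ that carries no risk premium.

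\emph{Step 3: admissibility, which I expect to be the main obstacle.} The baseline problem lives on the filtration of a single Brownian motion, but here the filtration is generated by $\widehat{\boldsymbol B}$, which is two-dimensional. I would handle this in two parts.
\begin{itemize}
\item \emph{Duality side.} The static budget constraint is preserved, with state price density $\widehat{\mathcal H}$ built from $(r_I,\Theta_\gamma)$ along $\boldsymbol\vartheta/\Theta_\gamma$; orthogonal exposures change only the martingale part. So weak duality (Proposition~\ref{pro:weak-duality}) and the dual problem are unchanged. The dual state $\mathcal Y$ and the reflected multiplier \eqref{eq:Z-star-y-explicit} are functionals of $\widehat B$ alone.
\item \emph{Primal side.} The candidate optimizer is $\widehat B$-measurable, and the portfolio reconstruction uses martingale representation, with $\boldsymbol e^\perp=0$. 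Hence the verification theorems (Theorem~\ref{thm:dual-verification} and Theorem~\ref{thm:strong-duality}) apply in the larger filtration.
\item \emph{Natural debt limit.} The condition $X_t>-h_I(t)I_t$ is exactly $w_t>-h_I(t)$. The requirement $\widehat\beta>0$ matches $\beta>0$, and $\Theta_\gamma>0$ matches $\sigma,\theta\neq0$ in the nondegeneracy used in Section~\ref{sec:parabolic-obstacle}.
\end{itemize}

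One remaining gap is the sign condition $r>0$, since $r_I$ may be nonpositive. I would check that Section~\ref{sec:parabolic-obstacle} uses $r$ only through $b_0$ and the constant $h$. I would then note that $h_I(t)=\int_t^Te^{-r_I(s-t)}ds$ stays positive and finite for any sign of $r_I$.

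Together, Steps 1–3 identify the problem with the stated baseline problem, so Sections~\ref{sec:parabolic-obstacle}--\ref{sec:verification-duality} apply.
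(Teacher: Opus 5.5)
Your proposal is correct and follows essentially the same route as the paper: a conditional change of measure and an It\^o computation for the ratio dynamics, the observation that exposure orthogonal to $\boldsymbol\vartheta$ carries no premium while the optimizer depends only on $\boldsymbol\vartheta\cdot\widehat{\boldsymbol B}/\Theta_\gamma$, so that martingale representation gives zero orthogonal exposure, and the checks that the sign of $r_I$ is never used while $(1-\gamma)P_\delta>0$, $\widehat\beta>0$ and $\Theta_\gamma>0$ supply the baseline hypotheses. Your Step~3 is more explicit than the paper's proof about rerunning the budget inequality, weak duality and verification in the two dimensional filtration, which the paper leaves implicit.
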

\begin{proof}
The identity \eqref{eq:income-utility-reduction} and the dynamics \eqref{eq:income-ratio-dynamics} are the conditional change of measure and It\^o computations displayed above. In the market \eqref{eq:income-ratio-dynamics} the state price density under $\widehat{\mathbb P}$ is driven by $\boldsymbol\vartheta\cdot\widehat{\boldsymbol B}/\Theta_\gamma$ with market price of risk $\Theta_\gamma$, and the exposure orthogonal to $\boldsymbol\vartheta$ carries no premium. The optimal consumption and terminal wealth of Section~\ref{sec:verification-duality} depend only on this one dimensional state. Hence the martingale representation of the replicating wealth has zero orthogonal component, and the exposure along $\boldsymbol\vartheta$ is the scalar portfolio of the baseline problem. The $B^2$ coordinate $\widetilde f$ itself need not vanish, as the display following \eqref{eq:portfolio-income-imp} shows. The sign of the interest rate is never used in Sections~\ref{sec:model}--\ref{sec:verification-duality}, only the integral formula for human wealth, so any finite $r_I$ is allowed. Assumption~\ref{as:P_finite} holds for $P_\delta$ because $(1-\gamma)P_\delta>0$ and $P_\delta$ is smooth, and $\widehat\beta>0$, $\Theta_\gamma>0$ are the discount and nondegeneracy hypotheses of the baseline proofs.
\end{proof}

The free boundary of the reduced problem has a particularly simple geometry.  Writing
$p_\delta(\tau):=P_\delta(T-\tau)$,
\begin{equation}\label{eq:g-outside-imp}
p_\delta'(\tau)+\widehat\beta p_\delta(\tau)
=\frac{\delta^{1-\gamma}}{1-\gamma},
\end{equation}
and hence the local contact bound is constant,
\begin{equation}\label{eq:Gamma-outside-imp}
\Gamma_\delta(\tau)=-\gamma\log\delta.
\end{equation}
Moreover,
\begin{equation}\label{eq:q-outside-imp}
q_\delta(\tau):=e^{\widehat\beta\tau}p_\delta(\tau)
=\frac{1}{1-\gamma}\left[
(\kappa_B\underline w_B)^{1-\gamma}
+\delta^{1-\gamma}\frac{e^{\widehat\beta\tau}-1}{\widehat\beta}
\right].
\end{equation}
For $0<\gamma<1$, $q_\delta'(\tau)>0$ on $(0,T)$, so the running record
condition is satisfied at every interior date and therefore
$\mathcal T_0=(0,T)$.  For $\gamma>1$, the existence result above already
implies contact at every interior date.  Thus this outside option
produces a finite free boundary throughout the horizon for every
$\gamma>0$, $\gamma\neq1$.

\subsection{Credit limits, consumption, and portfolio choice}

Let $Q^I$ and $N^I$ denote the reduced dual functions of the problem in
Proposition~\ref{prop:income-reduction}, and let
$b_I(t)=\exp\{\mathcal V_I(T-t)\}$.  The endogenous minimum
wealth to income ratio is
\begin{equation}\label{eq:credit-ratio-imp}
\underline w(t)=-Q^I_\xi(t,b_I(t)),
\qquad
\frac{X_t}{I_t}\ge \underline w(t).
\end{equation}
The natural debt limit is likewise proportional to current income:
\begin{equation}\label{eq:natural-credit-ratio-imp}
\frac{X_t}{I_t}>-h_I(t),
\qquad
h_I(t):=
\frac{1-e^{-r_I(T-t)}}{r_I},
\end{equation}
again with the continuous extension at $r_I=0$.  Hence
$-\underline w(t)$ is the endogenous borrowing capacity per unit of current
income whenever $\underline w(t)<0$.

For an effective normalized dual state $\Xi_t=e^{v_t}\le b_I(t)$, the
ratio of consumption to income has the same form as before,
\begin{equation}\label{eq:cons-income-imp}
\frac{c_t^*}{I_t}=\Xi_t^{-1/\gamma}=e^{-v_t/\gamma}.
\end{equation}
The stock position can also be recovered directly from the transformed dual
solution.  Since
$\Xi Q^I_{\xi\xi}=-N^I_\xi$, the ratio of risky investment to income is
\begin{equation}\label{eq:portfolio-income-imp}
\frac{\pi_t^*}{I_t}
=
\frac{\sigma_I\rho_I}{\sigma}\,w_t
-\frac{\theta-\gamma\sigma_I\rho_I}{\sigma}
 N^I_\xi(t,\Xi_t).
\end{equation}
The remaining exposure to the traded contract is
\[
\frac{f_t^*}{I_t}=\sigma_I\sqrt{1-\rho_I^2}\,w_t
+\gamma\sigma_I\sqrt{1-\rho_I^2}\,N_\xi^I(t,\Xi_t).
\]
The first term in \eqref{eq:portfolio-income-imp} scales the stock exposure with income, and the
second is the optimal exposure in the transformed complete market
problem.  When $\rho_I=0$, risky investment converges to zero as wealth
approaches the credit limit. When $\rho_I\neq0$, the portfolio at the boundary
is a pure income hedge.

A larger $\delta$ raises the outside value because $u$ is increasing. The admissible sets are then nested, so $\underline w(t)$ is nondecreasing in $\delta$ and borrowing capacity cannot increase.  In the numerical examples below, the effect of $\delta$ on consumption and risky investment is concentrated near the credit limit.  Income risk enters through $r_I$ and $\Theta_\gamma$ and
therefore affects both the credit boundary and the split of
\eqref{eq:portfolio-income-imp} into hedging and speculative parts.

\subsection{Numerical comparative statics}\label{subsec:implications-numerics}

We illustrate the implications for credit and policies under the complete market calibration
\[
T=20,\quad \beta=0.04,\quad r=0.02,\quad \mu=0.06,\quad \sigma=0.20,
\quad \mu_I=0.01,\quad \sigma_I=0.10,\quad \rho_I=0,
\]
with $\kappa_B=1$, $\underline w_B=0.05$, and
$\delta\in\{0.65,0.80,0.95\}$.  A larger $\delta$ makes the outside option more
attractive and therefore tightens the endogenous credit limit.

Figure~\ref{fig:imp-credit-limits} shows that a larger $\delta$ raises the minimum
wealth to income ratio $\underline w(t)$ for $\gamma$ on either side of one and reduces debt
capacity per unit of income.  The dash dotted curve is the natural debt limit
$-h_I(t)$, and the dotted curve is the benchmark with only the terminal floor,
\[
\underline w_{\rm term}(t)
=-h_I(t)+e^{-r_I(T-t)}\underline w_B.
\]
The gap between these benchmarks and the endogenous boundary shows that, away from maturity,
most of the tightening of credit comes from the interim welfare requirement.

\begin{figure}[!t]
\centering
\begin{tabular}{cc}
\includegraphics[width=0.45\textwidth]{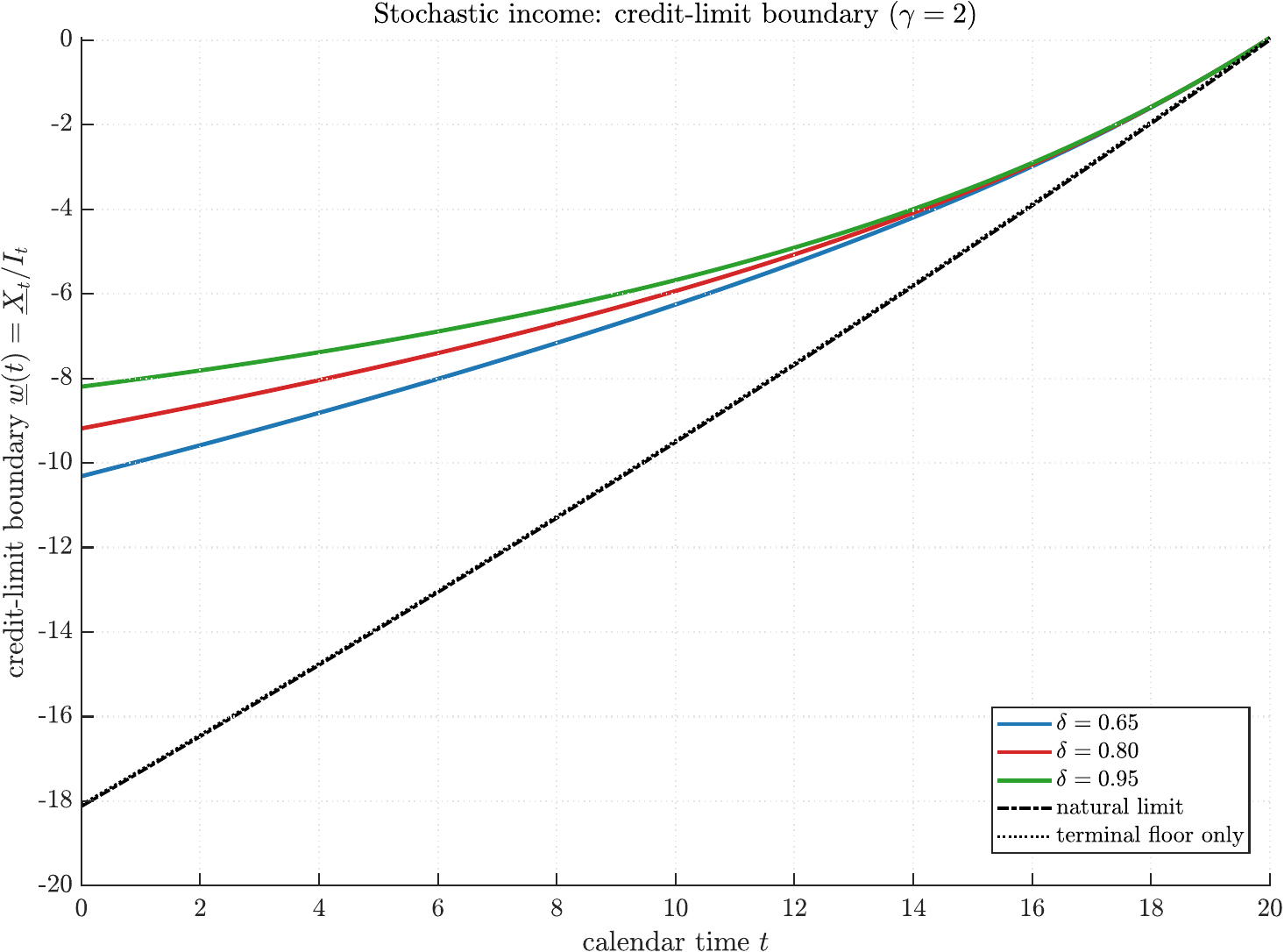}
&
\includegraphics[width=0.45\textwidth]{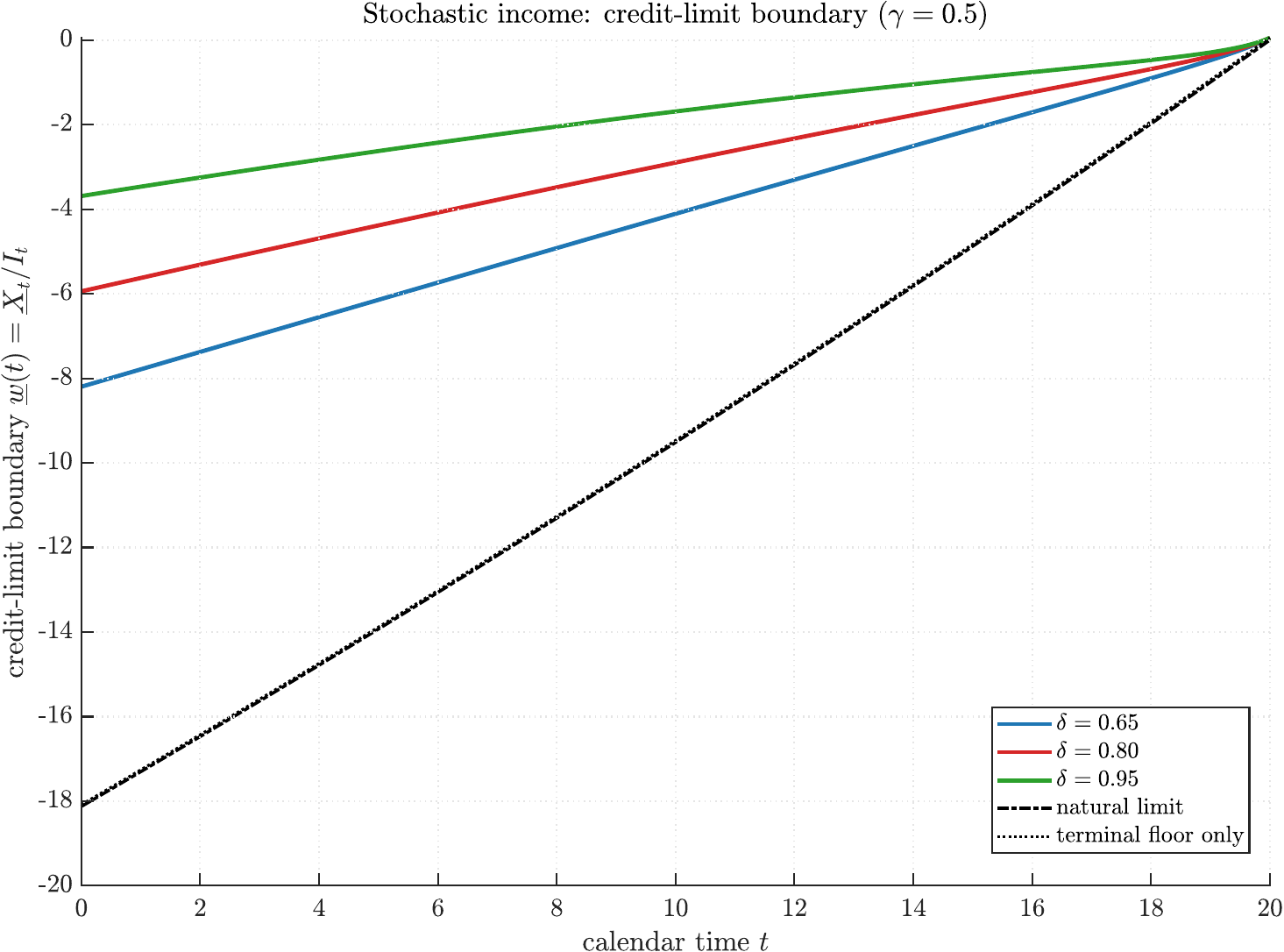}
\\[-0.4em]
{\small (a) $\gamma=2$}
&
{\small (b) $\gamma=0.5$}
\end{tabular}
\caption{Endogenous credit limits with stochastic income.  The colored curves
show the minimum wealth to income ratio $\underline w(t)$ for several values of $\delta$.
A larger $\delta$ makes the outside option more attractive and the credit limit
tighter.  The dash dotted and dotted black curves are the natural debt limit and
the benchmark with only the terminal floor. They differ by
$e^{-r_I(T-t)}\underline w_B\le0.05$ and therefore almost coincide in the figure.}
\label{fig:imp-credit-limits}
\end{figure}

Figure~\ref{fig:imp-policy-ratios} shows the corresponding optimal policies at
$t=0$ for $\gamma=2$.  The effects are concentrated near the endogenous lower end of the domain of the wealth to income ratio and become small as wealth rises.  Because
$\rho_I=0$, the stock position has no income hedging component at
the boundary. Smooth fit gives $N^I_\xi=0$ there, so risky investment converges to zero.
A more attractive outside option moves this point to a higher wealth to income
ratio and reduces risky exposure over the constrained region.

\begin{figure}[!t]
\centering
\begin{tabular}{cc}
\includegraphics[width=0.45\textwidth]{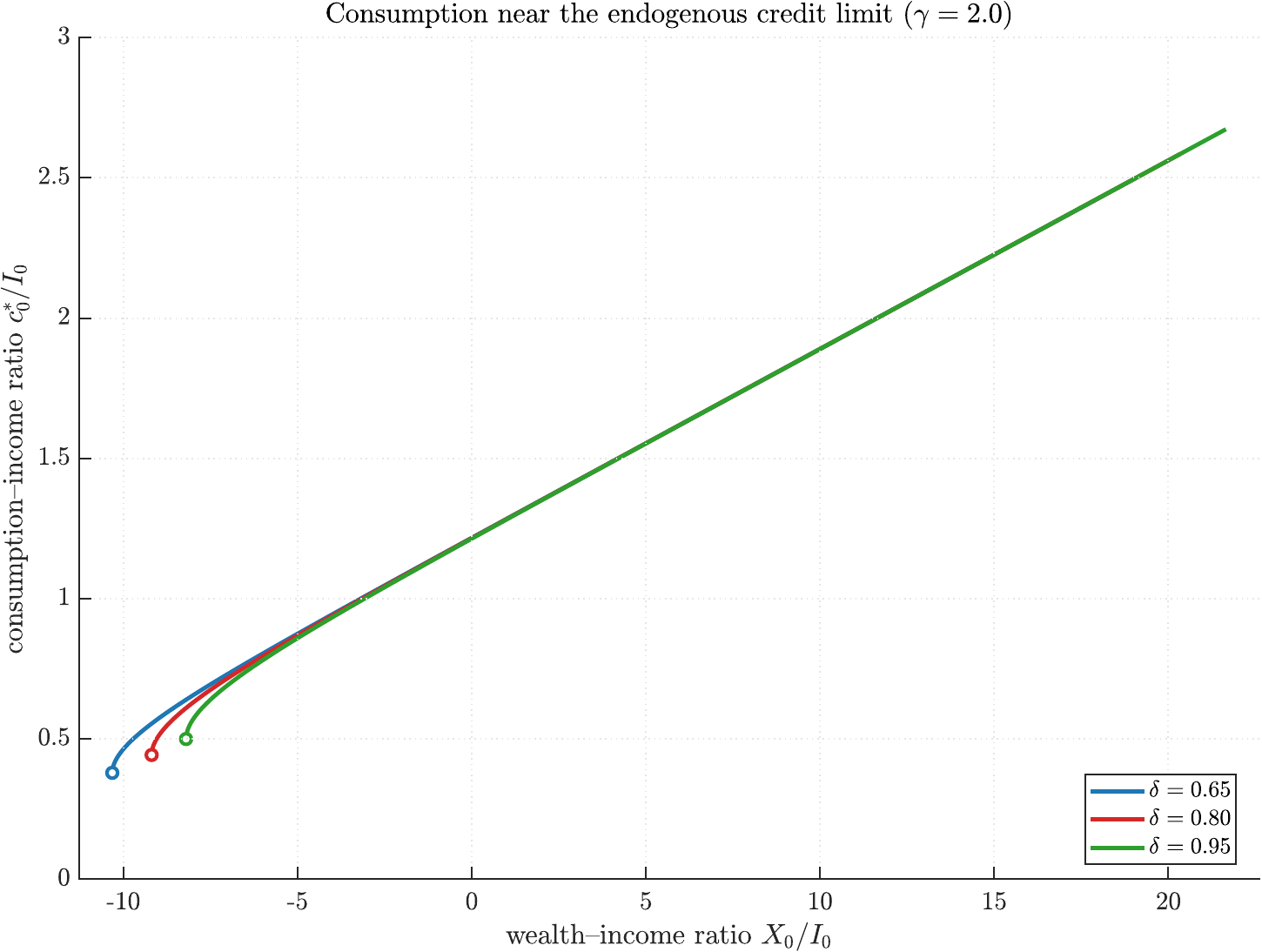}
&
\includegraphics[width=0.45\textwidth]{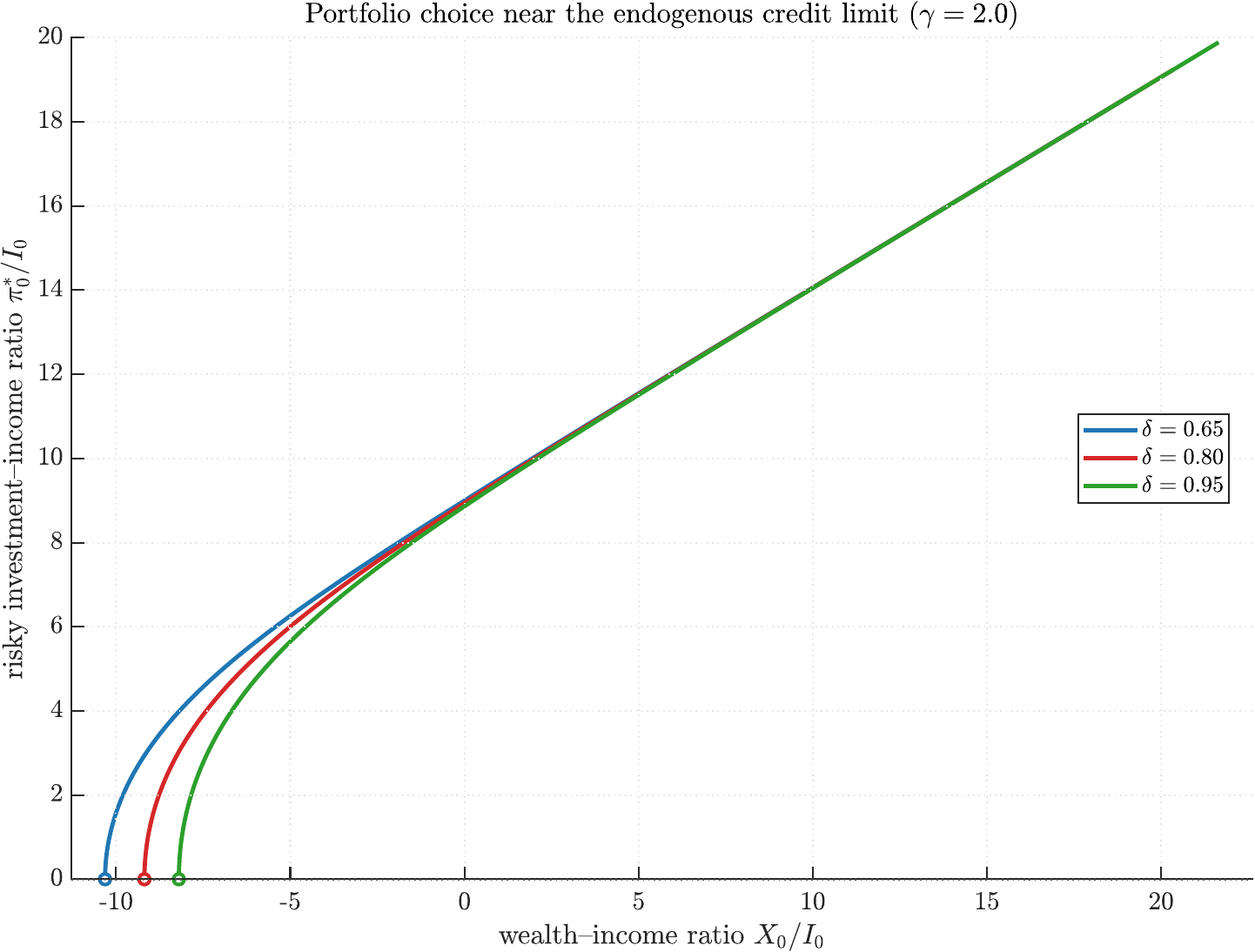}
\\[-0.4em]
{\small (a) Consumption to income ratio}
&
{\small (b) Risky investment to income ratio}
\end{tabular}
\caption{Optimal policy ratios as functions of the initial wealth to income ratio
for $\gamma=2$.  Open circles mark the endogenous credit limits.  The
parameter $\delta$ of the outside option mainly affects policies close to the borrowing
boundary, and the curves nearly merge as wealth relative to
income increases.}
\label{fig:imp-policy-ratios}
\end{figure}

\section{Conclusion}\label{sec:conclusion}

We have studied consumption and portfolio choice on a finite horizon when continuation utility must stay above a floor that changes over time. A cumulative multiplier reduces the problem to a parabolic obstacle problem with a time dependent obstacle and a kinked terminal condition. Its intervention boundary is finite at every interior date when $\gamma>1$. When $0<\gamma<1$, it is finite only on the set of dates defined by the strict running record condition, so it may disappear and later reenter. Verification and strong duality identify the dual boundary with the exact lower endpoint of the feasible wealth domain. With stochastic income the welfare floor is the value of an outside option under limited commitment, and a more attractive outside option raises the endogenous lower bound on the wealth to income ratio. In the numerical examples the resulting changes in consumption and investment are concentrated near the credit limit. Natural extensions include incomplete markets for income risk and outside options that are themselves stochastic.

\appendix
\small

\section{Proofs}\label{sec:proofs}

\subsection{Proof of Proposition \ref{pro:weak-duality}}\label{sec:weak-duality}
\begin{proof}
First suppose that $\mathcal Z$ is bounded, and put $A=1+\mathcal Z$.
For $0\le s\le T$, define the remaining discounted utility
\[
 \mathcal U_s:=\int_s^T e^{-\beta u}u(c_u)\,du
                 +e^{-\beta T}U_B(X_T).
\]
Its optional projection is
${}^{o}\mathcal U_s=e^{-\beta s}\mathcal W_s$. This equality also
holds at stopping times, by the continuous version of the uniformly
integrable utility martingale. Since $\mathcal Z$ is adapted and
c\`adl\`ag, it is optional, and
\[
 \mathbb E\!\int_{[0,T]}|\mathcal U_s|\,d\mathcal Z_s
 \le \|\mathcal Z_T\|_{L^\infty}
 \mathbb E\!\left[\int_0^T e^{-\beta u}|u(c_u)|\,du
                 +e^{-\beta T}|U_B(X_T)|\right]<\infty.
\]
The optional projection identity for an increasing optional integrator
(see \citet[Chapter~VI]{DellacherieMeyer1982}) therefore gives
\[
 \mathbb E\!\int_{[0,T]}\mathcal U_s\,d\mathcal Z_s
 =\mathbb E\!\int_{[0,T]}{}^{o}\mathcal U_s\,d\mathcal Z_s
 =\mathbb E\!\int_{[0,T]}e^{-\beta s}\mathcal W_s\,d\mathcal Z_s.
\]
Fubini's theorem, justified by the same bound on absolute integrability,
and $\mathcal Z_u=\int_{[0,u]}d\mathcal Z_s$ now yield
\begin{equation}\label{eq:W-Z-identity-new}
\begin{split}
\mathcal W_0=\mathbb E\bigg[&
\int_0^T e^{-\beta s}A_su(c_s)\,ds
+e^{-\beta T}A_TU_B(X_T)\\
&-\int_{[0,T]}e^{-\beta s}\mathcal W_s\,d\mathcal Z_s\bigg].
\end{split}
\end{equation}
The Stieltjes integrals include the possible atom at $0$.

Using $\mathcal W_s\ge P(s)$, the budget inequality, and the two
Fenchel inequalities gives
\[
\mathcal W_0\le yx+\mathbb E\bigg[
\int_0^T e^{-\beta s}\{A_s\widetilde u(\mathcal Y_s^y/A_s)
+\epsilon\mathcal Y_s^y\}\,ds
+e^{-\beta T}A_T\widetilde U_B(\mathcal Y_T^y/A_T)
-\int_{[0,T]}e^{-\beta s}P(s)\,d\mathcal Z_s\bigg].
\]
This is the desired inequality for bounded multipliers.

For a general $\mathcal Z\in\Pi(0)$ apply the preceding result to
$\mathcal Z^{(n)}=\min\{\mathcal Z,n\}$. The absolute values of the
perspective terms are bounded by a constant times
\[
A_T+\sup_{s\le T}\mathcal Y_s^y
+A_T^{1/\gamma}\sup_{s\le T}(\mathcal Y_s^y)^a.
\]
This is integrable by the multiplier moment condition and the positive
and negative moments of the lognormal process. The Stieltjes terms are
dominated by $C\mathcal Z_T$. Dominated convergence therefore gives
$\mathfrak L(y,\mathcal Z^{(n)})\to\mathfrak L(y,\mathcal Z)$ and proves
the claim. Thus the extension to unbounded multipliers uses only
integrable bounds for the dual criterion.
\end{proof}

\subsection{Proof of Proposition \ref{pro:dimension_reduction}}\label{proof:pro:dimension_reduction}
\begin{proof}
Fix $(t,y,z)$ and set $\xi=y/(1+z)$.  For any
$\mathcal Z\in\Pi_t(z)$ define
\[
 \mathcal D_s:=\frac{1+\mathcal Z_s}{1+z}-1,
 \qquad t\le s\le T.
\]
Because $\mathcal Z_{t-}=z$, we have $\mathcal D_{t-}=0$, so
$\mathcal D\in\Pi_t(0)$.  Conversely, every
$\mathcal D\in\Pi_t(0)$ produces a unique
$\mathcal Z\in\Pi_t(z)$ via
\[
 \mathcal Z_s=(1+z)(1+\mathcal D_s)-1.
\]
Moreover,
\[
 \mathcal Y_s^{t,y}=(1+z)\mathcal Y_s^{t,\xi},\qquad
 \frac{\mathcal Y_s^{t,y}}{1+\mathcal Z_s}
 =\frac{\mathcal Y_s^{t,\xi}}{1+\mathcal D_s},\qquad
 d\mathcal Z_s=(1+z)d\mathcal D_s,
\]
including possible atoms at $t$ and $T$.  Every term in the criterion defining
$\mathfrak J(t,y,z)$ therefore equals $(1+z)$ times the corresponding term for
$\mathfrak J(t,\xi,0)$.  Taking infima yields
\[
 \mathfrak J(t,y,z)=(1+z)Q\!\left(t,\frac y{1+z}\right).
\]
\end{proof}

\subsection{Proof of Lemma \ref{lem:existence-R}}\label{proof:lem:existence-R}
\begin{proof}
The lognormal moment bounds make every payoff in
\eqref{eq:R-stopping-representation} integrable, uniformly over stopping
times. If $0<\gamma<1$, its absolute value is bounded in expectation
by $C(1+e^{av})$. If $\gamma>1$, every reward is negative and immediate
stopping gives $p(\tau)\le R(\tau,v)\le0$. Thus the same growth bound
holds in either case. The Snell envelope construction and dynamic
programming give the continuous obstacle value. The terminal reward
is $\max\{B,p(0)\}$, including the option to enforce terminal welfare.
See \citet{PS} for the stopping formulation.

Fix a bounded cylinder $\mathcal Q_0\Subset\Omega_T$, and let $g_\varepsilon$ be
smooth nonnegative approximations converging uniformly to $R-p$ on
the parabolic boundary $\partial_p\mathcal Q_0$. Choose a smooth nonnegative,
nonincreasing penalty $\omega_\varepsilon$, vanishing on
$[\varepsilon,\infty)$, with
$\omega_\varepsilon(0)=\max\{0,\sup_{\mathcal Q_0}(p'+\beta p-f)\}$.
Let $R_\varepsilon$ solve
\[
\begin{cases}
(\partial_\tau-\mathscr L)R_\varepsilon
=f+\omega_\varepsilon(R_\varepsilon-p)&\text{in }\mathcal Q_0,\\
R_\varepsilon=p+g_\varepsilon&\text{on }\partial_p\mathcal Q_0.
\end{cases}
\]
Comparison gives $R_\varepsilon\ge p$ and
$0\le\omega_\varepsilon(R_\varepsilon-p)\le\omega_\varepsilon(0)$,
where the upper bound is independent of $\varepsilon$.
Since $f$ and the boundary data are uniformly bounded on this fixed
cylinder, comparison also gives an $L^\infty(\mathcal Q_0)$ bound independent of
$\varepsilon$.
Fix a sequence $\varepsilon_n\downarrow0$. The uniform
bound on the right hand side gives interior H\"older estimates for
$R_{\varepsilon_n}$. The boundary values $p+g_{\varepsilon_n}$ converge
uniformly to the continuous function $R$ and are therefore equicontinuous.
Boundary barriers then give a common modulus of continuity at
$\partial_p\mathcal Q_0$ \citep{Lieberman,Ladyzhenskaya}.
The Arzel\`a--Ascoli theorem yields a subsequence, relabelled
$\varepsilon_j$, and $u\in C(\overline{\mathcal Q_0})$ such that
\[
    R_{\varepsilon_j}\longrightarrow u
    \qquad\text{uniformly on }\overline{\mathcal Q_0}.
\]
In particular, $u\ge p$ in $\mathcal Q_0$ and $u=R$ on
$\partial_p\mathcal Q_0$.

For every cylinder $\mathcal Q\Subset\mathcal Q_0$
and $1<q<\infty$, the interior $W_q^{1,2}$ estimate gives
\[
    \|R_{\varepsilon_j}\|_{W_q^{1,2}(\mathcal Q)}
    \le C_{\mathcal Q,q},
\]
where $C_{\mathcal Q,q}$ is independent of $j$.
Applying weak compactness along a countable increasing exhaustion of
$\mathcal Q_0$ by cylinders and a sequence of exponents $q_m\uparrow\infty$,
we obtain a further subsequence, still denoted by $\varepsilon_j$, such that
\[
    R_{\varepsilon_j}\rightharpoonup u
    \qquad\text{in }W_q^{1,2}(\mathcal Q).
\]
This holds for every $\mathcal Q\Subset\mathcal Q_0$ and $1<q<\infty$;
the weak limit is $u$ by uniform convergence.
Thus $u\in W_{q,\mathrm{loc}}^{1,2}(\mathcal Q_0)$ for every
$1<q<\infty$. Since the penalty is nonnegative, testing the penalized
equation against a nonnegative smooth function of compact support and
passing to the weak limit gives
\[
    (\partial_\tau-\mathscr L)u\ge f
    \qquad\text{a.e. in }\mathcal Q_0.
\]
To obtain equality on $\{u>p\}$, fix a compact set
$K\Subset\mathcal Q_0\cap\{u>p\}$ and set
$d_K:=\min_K(u-p)>0$. Uniform convergence gives
$R_{\varepsilon_j}-p\ge d_K/2>\varepsilon_j$ on $K$ for all
sufficiently large $j$. Hence the penalty vanishes there, and passage
to the weak limit yields
\[
    (\partial_\tau-\mathscr L)u=f
    \qquad\text{a.e. in }\mathcal Q_0\cap\{u>p\}.
\]
Together with the boundary values, these properties show that $u$
solves the local obstacle problem.

Verification by the generalized It\^o formula, with stopping at first
contact with the obstacle or at exit, identifies $u$ with the local
stopping value, with discount
rate $\beta$, running reward $f$, reward $p$ on stopping before exit,
and boundary reward $R$. Dynamic programming for
\eqref{eq:R-stopping-representation} therefore gives $u=R$ in $\mathcal Q_0$.
Since $\mathcal Q_0$ is arbitrary, $R\in W_{q,\mathrm{loc}}^{1,2}(\Omega_T)$
for every $1<q<\infty$ and solves \eqref{VI:R}.
Parabolic Sobolev embedding for $q>3$ gives the local H\"older
continuity of $R_v$.

To make uniqueness explicit, take $k>|a|$ and choose $c$ sufficiently
large that
$\Psi(\tau,v)=e^{c\tau}(e^{kv}+e^{-kv})$ satisfies
$(\partial_\tau-\mathscr L)\Psi\ge0$. For two solutions in
\eqref{eq:R-growth-class}, compare the first with the second plus
$\varepsilon\Psi$ on $(-n,n)$. The lateral inequalities hold for all
sufficiently large $n$, uniformly in time. The obstacle comparison
principle applies because, wherever the first function exceeds the
second, the first is strictly above its obstacle. Let $n\to\infty$
and then $\varepsilon\to0^+$, and interchange the solutions.
This proves uniqueness in the stated class.
The comparison assertions follow from the same barrier. For the first, on the open set $\{R>\zeta\}\subset\{R>p\}$ one has $(\partial_\tau-\mathscr L)(R-\zeta)\le0$, so comparing $R$ with $\zeta+\varepsilon\Psi$ on $\mathcal N\cap\{|v|<n\}$, where the lateral inequalities hold for large $n$ by the growth bound, and letting $n\to\infty$, $\varepsilon\to0^+$ gives $R\le\zeta$. For the second, $(\partial_\tau-\mathscr L)R\ge f\ge(\partial_\tau-\mathscr L)\zeta$ a.e. on $\{\zeta>R\}\subset\{\zeta>p\}$, and the same barrier gives $\zeta\le R$.

Finally, put $W=R-p$ and $\mathfrak d=\theta^2/2$. Then
\[
(\partial_\tau-\mathscr L)W
=\{f(v)-p'(\tau)-\beta p(\tau)\}\mathbf1_{\{W>0\}}.
\]
The change of variables $v=\sqrt{\mathfrak d}\,x-b_0\tau$ and multiplication by $e^{\beta\tau}$
reduce this to an obstacle problem for the heat operator. Before multiplication
by the positivity indicator, the transformed source is
\[
 e^{\beta\tau}\bigl[f(\sqrt{\mathfrak d}\,x-b_0\tau)-p'(\tau)-\beta p(\tau)\bigr].
\]
It is locally Lipschitz in $(\tau,x)$ because $f$ is smooth and
Assumption~\ref{as:P_finite} gives the Lipschitz continuity of $p'$.
Theorem~1 of \citet{Optimalregularity} therefore gives
locally bounded first time and second spatial derivatives of the transformed solution.
Transforming back and using $p'\in L^\infty(0,T)$ gives
$R_\tau,R_{vv}\in L^\infty_{\mathrm{loc}}(\Omega_T)$.
Together with the local boundedness of $R$ and $R_v$, this proves
$R\in W_{\infty,\mathrm{loc}}^{1,2}(\Omega_T)$.
\end{proof}

\subsection{Proof of Lemma \ref{lem:necessary-contact-R}}\label{proof:lem:necessary-contact-R}
\begin{proof}
Set $g(\tau):=p'(\tau)+\beta p(\tau)$.
Let $W:=R-p$ and fix a contact point
$(\tau_0,v_0)\in\{R=p\}$.
We first prove $f(v_0)\le g(\tau_0)$.
Suppose, to the contrary, that $f(v_0)>g(\tau_0)$. By continuity,
there are a cylinder $\mathcal Q\Subset\Omega_T$ containing
$(\tau_0,v_0)$ and a constant $\eta>0$ such that
$f(v)-g(\tau)\ge\eta$ in $\mathcal Q$.
Since $(\partial_\tau-\mathscr L)R\ge f$ almost everywhere, we have
\[
    (\partial_\tau-\mathscr L)W
    \ge f(v)-g(\tau)\ge\eta
    \qquad\text{a.e. in }\mathcal Q.
\]
On the other hand, $W\ge0$ and $W(\tau_0,v_0)=0$.
By Lemma~\ref{lem:existence-R}, $W$ is continuous and belongs to
$W_{\infty,\mathrm{loc}}^{1,2}(\Omega_T)$.
The strong maximum principle for strong supersolutions \citep{Lieberman}
therefore forces $W$ to vanish in a backward subcylinder of
$\mathcal Q$, contradicting the strict inequality.
Thus $f(v_0)\le g(\tau_0)$.

Suppose first that $\gamma\in(0,1)$. Since $f>0$, contact is impossible
when $g(\tau_0)\le0$. If $g(\tau_0)>0$, the strict monotonicity of
$f$ gives $v_0\ge\Gamma(\tau_0)$.

We now exclude equality. Suppose that
$v_0=\Gamma(\tau_0)<+\infty$. Since $g(\tau_0)>0$ and
$p\in C^{1,1}([0,T])$, the function $g$ is locally Lipschitz and
remains strictly positive near $\tau_0$. Hence $\Gamma$ is locally
Lipschitz near $\tau_0$. For sufficiently small $\kappa>0$, set
\[D_\kappa:=\{(\tau,v):|\tau-\tau_0|<\kappa^2,
-\kappa<v-\Gamma(\tau)<0\}\Subset\Omega_T.\] Since $v\ge\Gamma(\tau)$ at every contact point $(\tau,v)$, $W>0$ in $D_\kappa$. Moreover, since $f$ is
strictly decreasing and $f(\Gamma(\tau))=g(\tau)$, we have
$(\partial_\tau-\cL)W=f(v)-g(\tau)>0$ in $D_\kappa$.

To flatten the lateral boundary, set $y:=v-\Gamma(\tau)$ and
$\widetilde W(\tau,y):=W(\tau,y+\Gamma(\tau))$. This change of
variables maps $D_\kappa$ onto
$(\tau_0-\kappa^2,\tau_0+\kappa^2)\times(-\kappa,0)$ and maps
$(\tau_0,v_0)$ to $(\tau_0,0)$. Moreover, $\widetilde W$ satisfies
\[
    \partial_\tau\widetilde W
    -\frac{\theta^2}{2}\partial_{yy}\widetilde W
    -\bigl(b_0+\Gamma'(\tau)\bigr)\partial_y\widetilde W
    +\beta\widetilde W
    =f\bigl(y+\Gamma(\tau)\bigr)-g(\tau)>0
\]
in the flat cylinder. Since $\Gamma$ is locally
Lipschitz, $\Gamma'\in L^\infty$ locally. Thus the transformed
operator is uniformly parabolic, its leading coefficient is
constant, and all its lower order coefficients are bounded. The
parabolic Hopf boundary point lemma for strong solutions with bounded measurable lower order coefficients \citep[Section~3]{Nazarov2012} therefore applies at the boundary point $(\tau_0,0)$ on the lateral boundary $\{y=0\}$ and gives
$\partial_y\widetilde W(\tau_0,0)<0$. Since
$\partial_y\widetilde W(\tau_0,0)=\partial_vW(\tau_0,v_0)$, we
obtain $\partial_vW(\tau_0,v_0)<0$.

On the other hand, $W(\tau_0,\cdot)\ge0$ and
$W(\tau_0,v_0)=0$, so $v_0$ is a minimum point of
$W(\tau_0,\cdot)$. Hence $\partial_vW(\tau_0,v_0)=0$, which is a
contradiction. Therefore, $v_0>\Gamma(\tau_0)$.

Suppose next that $\gamma\in(1,\infty)$. If $g(\tau_0)<0$, the same argument
excludes equality and gives $v_0>\Gamma(\tau_0)$. If
$g(\tau_0)\ge0$, then $f(v)<0\le g(\tau_0)$ for every
$v\in\mathbb R$, while $\Gamma(\tau_0)=-\infty$. Thus
$v_0>\Gamma(\tau_0)$ in all cases.
\end{proof}

\subsection{Proof of Lemma \ref{lem:strict-record-condition}}\label{proof:lem:strict-record-condition}
\begin{proof}
Let $(\tau_0,v_0)\in\Omega_T$ be such that
$R(\tau_0,v_0)=p(\tau_0)$. The first assertion follows immediately
from Lemma~\ref{lem:necessary-contact-R}. Indeed,
$f(v_0)<p'(\tau_0)+\beta p(\tau_0)$, and since $f(v_0)>0$ for
$\gamma\in(0,1)$, we obtain
\[
q'(\tau_0)
=
e^{\beta\tau_0}\{p'(\tau_0)+\beta p(\tau_0)\}
>0.
\]

Now fix $s\in[0,\tau_0)$ and define
$\underline R_s(\tau,v):=e^{-\beta(\tau-s)}p(s)$ on
$[s,T]\times\mathbb R$. Since $\underline R_s$ is independent of
$v$, we have $(\partial_\tau-\cL)\underline R_s=0$. Moreover,
$(\partial_\tau-\cL)R\ge f>0$ for $\gamma\in(0,1)$, while at
$\tau=s$ we have
$R(s,v)\ge p(s)=\underline R_s(s,v)$. Hence the comparison principle (in the growth class of Lemma~\ref{lem:existence-R})
gives
\[
R(\tau,v)\ge e^{-\beta(\tau-s)}p(s)
\qquad\text{on }\ [s,T]\times\mathbb R.
\]

The inequality is in fact strict for every $\tau>s$. Otherwise,
$R-\underline R_s$ would attain the value $0$ at some interior point,
while $R-\underline R_s\ge0$ and
$(\partial_\tau-\cL)(R-\underline R_s)\ge f>0$, contradicting the
strong maximum principle. Evaluating the resulting strict inequality
at $(\tau_0,v_0)$ gives
$p(\tau_0)>e^{-\beta(\tau_0-s)}p(s)$, and therefore
$q(\tau_0)>q(s)$.
\end{proof}

\subsection{Proof of Lemma \ref{lem:T0-open}}\label{proof:lem:T0-open}
\begin{proof}
Fix $\tau_0\in\mathcal T_0$. Since $q'(\tau_0)>0$, there is $\kappa_0\in(0,\min\{\tau_0,T-\tau_0\})$
such that $q$ is strictly increasing on $[\tau_0-\kappa_0,\tau_0+\kappa_0]$,
and since $\tau_0$ is a strict running record time,
$\max_{0\le s\le\tau_0-\kappa_0}q(s)<q(\tau_0)$. By continuity there is
$\kappa\in(0,\kappa_0)$ such that $q'(\tau)>0$ and
$q(\tau)>\max_{0\le s\le\tau_0-\kappa_0}q(s)$ whenever $|\tau-\tau_0|<\kappa$.
For such $\tau$ and $0\le s<\tau$, either $s\le\tau_0-\kappa_0$, and then
$q(s)<q(\tau)$ by the preceding inequality, or $s>\tau_0-\kappa_0$, and then
$q(s)<q(\tau)$ by strict monotonicity. Hence
$(\tau_0-\kappa,\tau_0+\kappa)\subset\mathcal T_0$.
\end{proof}

\subsection{Proof of Lemma \ref{lem:contact-existence-gamma-less-one}}\label{proof:lem:contact-existence-gamma-less-one}
\begin{proof}
Fix $\tau_0\in\mathcal T_0$. There exist $\kappa_0\in(0,\min\{\tau_0,T-\tau_0\})$ and
$c_0>0$ such that $q'(\tau)\ge2c_0$ and $q(\tau)>0$ whenever
$|\tau-\tau_0|\le\kappa_0$. Moreover,
$\max_{0\le s\le\tau_0-\kappa_0}q(s)<q(\tau_0)$. By continuity,
we may choose $\kappa\in(0,\kappa_0)$ such that
\[
    d_0
    :=
    \min_{|\tau-\tau_0|\le\kappa/2}
    \left[q(\tau)-\max_{0\le s\le\tau_0-\kappa}q(s)\right]
    >0.
\]
Choose $\lambda_0>0$ sufficiently small that
$\lambda_0 e^{\beta T}\le c_0$ and
$\lambda_0\int_0^T e^{\beta\xi}\,d\xi\le d_0/2$. We claim that,
whenever $|\tau-\tau_0|\le\kappa/2$,
\begin{equation}\label{ineq:q}
    q(\tau)-q(s)
    \ge
    \lambda_0\int_s^\tau e^{\beta\xi}\,d\xi
    \qquad\text{for every }\ 0\le s\le\tau.
\end{equation}
If $s\ge\tau_0-\kappa$, then
$q'(\xi)-\lambda_0 e^{\beta\xi}\ge c_0$ between $s$ and $\tau$, and
the claim follows by integrating both sides.
If $s\le\tau_0-\kappa$, then the
left hand side is at least $d_0$, whereas the integral on the
right hand side is at most $d_0/2$. Thus \eqref{ineq:q} holds.

Fix $\tau\in [\tau_0-\kappa/2,\tau_0+\kappa/2]$ and define
\[
    \rho_\tau(s)
    :=
    e^{\beta(\tau-s)}p(\tau)
    -\lambda_0\int_s^\tau e^{\beta(\xi-s)}\,d\xi,
    \qquad 0\le s\le\tau.
\]
Then $\rho_\tau'(s)+\beta\rho_\tau(s)=\lambda_0$ and
$\rho_\tau(\tau)=p(\tau)$. Moreover, using
$q(s)=e^{\beta s}p(s)$, the inequality \eqref{ineq:q} gives
\[
    e^{\beta s}\{\rho_\tau(s)-p(s)\}
    =q(\tau)-q(s)-\lambda_0\int_s^\tau e^{\beta\xi}\,d\xi
    \ge0.
\]
Hence $\rho_\tau(s)\ge p(s)$ for every $0\le s\le\tau$.

Since $a=1-1/\gamma<0$, the growth bound \eqref{eq:R-growth-class} in Lemma~\ref{lem:existence-R} and the boundedness of $p$ imply that
there exists $M>0$ such that
\[
    R(s,v)-p(s)\le M
    \qquad\text{for }\ (s,v)\in[0,T]\times[0,\infty).
\]
Choose \(\varkappa>0\) sufficiently small that
\[
    \varkappa^{-2}\ge\max\{0,v_B\},\qquad
    \varkappa^{-1}\ge M,\qquad
    f(\varkappa^{-2})\le\frac{\lambda_0}{2},\qquad
    \theta^2\varkappa^3+2|b_0|\varkappa\le\frac{\lambda_0}{2}.
\]
Then \(p(s)+\varkappa^{-1}\ge R(s,\varkappa^{-2})\) for every \(s\in[0,T]\).
Moreover, since \(\varkappa^{-2}\ge v_B\), we have
\(R(0,v)=p(0)\) for every \(v\ge \varkappa^{-2}\).

With such \(\varkappa>0\), define
$\mathcal N_\tau:=(0,\tau)\times(\varkappa^{-2},\infty)$
and
\[\zeta_\tau(s,v)
    :=\rho_\tau(s)+\varkappa^3\bigl[(2\varkappa^{-2}-v)^+\bigr]^2
    \qquad \text{for }\ (s,v)\in \mathcal{N}_\tau.\]
Then $\zeta_\tau\in
    W^{1,2}_{\infty,\mathrm{loc}}(\mathcal N_\tau)
    \cap C(\overline{\mathcal N_\tau})$.
Since $\rho_\tau\ge p$, we have
$\zeta_\tau\ge p$ in $\mathcal N_\tau$.
For $v\ge2\varkappa^{-2}$, $\zeta_\tau(s,v)=\rho_\tau(s)$ and hence
$(\partial_s-\cL)\zeta_\tau=\lambda_0\ge f(v)$ in $\mathcal N_\tau$.
For $\varkappa^{-2}\le v\le2\varkappa^{-2}$, direct computation gives
\[
\begin{aligned}
    (\partial_s-\cL)\zeta_\tau
    &=\lambda_0-\theta^2\varkappa^3
      -2b_0\varkappa^3\left(v-\frac2{\varkappa^2}\right)
      +\beta \varkappa^3\left(v-\frac2{\varkappa^2}\right)^2\\
    &\ge\lambda_0-\theta^2\varkappa^3-2|b_0|\varkappa
    \ge\frac{\lambda_0}{2}\ge f(v).
\end{aligned}
\]

On the initial boundary, since $\varkappa^{-2}\ge v_B$, we have
$\zeta_\tau(0,v)\ge\rho_\tau(0)\ge p(0)=R(0,v)$. On the
lateral boundary $v=\varkappa^{-2}$,
$\zeta_\tau(s,\varkappa^{-2})=\rho_\tau(s)+\varkappa^{-1}
\ge R(s,\varkappa^{-2})$. The comparison principle (in the growth class of Lemma~\ref{lem:existence-R}) on the unbounded
half cylinder $\mathcal N_\tau$ therefore yields
$R\le\zeta_\tau$ in $\overline{\mathcal N_\tau}$. For $v\ge2\varkappa^{-2}$ this gives
$\zeta_\tau(\tau,v)=\rho_\tau(\tau)=p(\tau)$, and hence
$R(\tau,v)=p(\tau)$ for $v\ge2\varkappa^{-2}$ because $R\ge p$. 
The constant $\lambda_0>0$ was chosen uniformly for
$|\tau-\tau_0|\le\kappa/2$, and then \(\varkappa>0\) was chosen, depending on
this $\lambda_0$, so that all the preceding conditions hold uniformly
over the same interval. Hence the assertion follows with
$\nu=\kappa/2$ and $V=2\varkappa^{-2}$.
\end{proof}

\subsection{Proof of Lemma \ref{lem:bounds-gamma-greater-one}}\label{proof:lem:bounds-gamma-greater-one}
\begin{proof}
The first assertion follows from the stopping representation
\eqref{eq:R-stopping-representation}: for $\gamma\in(1,\infty)$ every reward
is nonpositive, so $R\le0$ in $\Omega_T$.

For the second assertion, let $\vartheta$ solve
\[
    \vartheta'(\tau)
    -
    \left[
        \frac{\theta^2}{2}
        \left(1-\frac1\gamma\right)^2
        +b_0\left(1-\frac1\gamma\right)
        -\beta
    \right]\vartheta(\tau)
    =
    \frac{1}{1-\gamma},
    \qquad
    \vartheta(0)
    =
    \frac{\kappa_B^{(1-\gamma)/\gamma}}{1-\gamma}.
\]
Then a direct computation gives
\[
    (\partial_\tau-\cL)
    \left[
        \vartheta(\tau)e^{(1-1/\gamma)v}
    \right] =f(v)
\quad \text{and} \quad
\vartheta(0)e^{(1-1/\gamma)v}
    =B(v)\le R(0,v).\]
Since $(\partial_\tau-\cL)R\ge f$, the comparison principle yields
\[
    R(\tau,v)
    \ge
    \vartheta(\tau)e^{(1-1/\gamma)v}.
\]
For each fixed $\tau\in(0,T)$, since $1-1/\gamma>0$,
\[
    \vartheta(\tau)e^{(1-1/\gamma)v}\to0
    \qquad\text{as }\ v\to-\infty.
\]
Since $p$ is continuous and strictly negative on $[0,T]$, there exists
$m>0$ such that
\[
    p(\tau)\le -m
    \qquad\text{for all }\ \tau\in[0,T].
\]
Moreover, $\vartheta$ is bounded on $[0,T]$, say
$|\vartheta(\tau)|\le M$. Since $1-1/\gamma>0$, we may choose
$v_*\in\mathbb R$ sufficiently negative so that
\[
    M e^{(1-1/\gamma)v_*}<m.
\]
Then, for every $\tau\in[0,T]$ and
$v\le v_*$,
\[
    R(\tau,v)
    \ge
    \vartheta(\tau)e^{(1-1/\gamma)v}
    >
    p(\tau).
\]
In particular,
\[
    (-\infty,v_*]
    \subset
    \{v\in\mathbb R:R(\tau,v)>p(\tau)\}
    \qquad\text{for all }\ \tau\in[0,T].
\]
\end{proof}

\subsection{Proof of Lemma \ref{lem:contact-existence-gamma-greater-one}}\label{proof:lem:contact-existence-gamma-greater-one}
\begin{proof}
Since $p\in C^{1,1}([0,T])$, there exists a constant $C>0$ such that
\[
    p'(\tau)+\beta p(\tau)\ge-C
\]
for every $\tau\in(0,T)$.

Choose \(\varkappa>0\) sufficiently small that
\[
    \varkappa^{-2}\ge v_B,\qquad
    f(\varkappa^{-2})\le-2C,\qquad
    \theta^2\varkappa^3+2|b_0|\varkappa\le C,\qquad
    \varkappa^{-1}\ge\|p\|_{L^\infty(0,T)}
\]
and define
\[
    \zeta_\varkappa(\tau,v)
    :=
    p(\tau)
    +\varkappa^3\bigl[(2\varkappa^{-2}-v)^+\bigr]^2
\]
on \(\mathcal{N}_\varkappa:=(0,T)\times(\varkappa^{-2},\infty)\). 
Clearly, $\zeta_\varkappa\in
W^{1,2}_{\infty,\mathrm{loc}}(\mathcal{N}_\varkappa)
\cap C(\overline{\mathcal{N}_\varkappa})$ and \(\zeta_\varkappa\ge p\) in $\mathcal{N}_\varkappa$.
For \(v\ge2\varkappa^{-2}\), the additional term vanishes, and hence
\[
    (\partial_\tau-\cL)\zeta_\varkappa
    =
    p'+\beta p
    \ge-C
    \ge f(v).
\]
For \(\varkappa^{-2}\le v\le2\varkappa^{-2}\), since $f$ is decreasing,
\[
    (\partial_\tau-\cL)\zeta_\varkappa
    \ge
    -C-\theta^2\varkappa^3-2|b_0|\varkappa
    \ge-2C
    \ge f(v).
\]
Since \(\varkappa^{-2}\ge v_B\), the initial condition gives
\(R(0,v)=p(0)\) for every \(v\ge \varkappa^{-2}\). Hence on the initial boundary of $\mathcal{N}_\varkappa$,
\[
    \zeta_\varkappa(0,v)
    =
    p(0)+\varkappa^3\bigl[(2\varkappa^{-2}-v)^+\bigr]^2
    \ge R(0,v).
\]
On the lateral boundary $v=\varkappa^{-2}$ of $\mathcal{N}_\varkappa$, Lemma~\ref{lem:bounds-gamma-greater-one}
and the choice of \(\varkappa\) yield
\[
    \zeta_\varkappa(\tau,\varkappa^{-2})
    =
    p(\tau)+\varkappa^{-1}
    \ge0
    \ge R(\tau,\varkappa^{-2}).
\]
The comparison principle (in the growth class of Lemma~\ref{lem:existence-R}) therefore gives \(R\le\zeta_\varkappa\) on $\overline{\mathcal{N}_\varkappa}$. 
For \(v\ge2\varkappa^{-2}\), the additional
term vanishes, and hence
\[
    R(\tau,v)
    \le
    \zeta_\varkappa(\tau,v)
    =
    p(\tau).
\]
Since \(R\ge p\), it follows that
\[
    R(\tau,v)=p(\tau)
    \qquad
    \text{on }\ [0,T]\times[2\varkappa^{-2},\infty).
\]
Thus the assertion follows with \(V_0:=2\varkappa^{-2}\).
\end{proof}

\subsection{Proof of Lemma \ref{lem:state-monotonicity-R}}\label{proof:lem:state-monotonicity-R}
\begin{proof}
We first prove that \(R\) is nonincreasing in \(v\). For \(\varkappa>0\),
define \(R^\varkappa(\tau,v):=R(\tau,v+\varkappa)\). Then \(R^\varkappa\) satisfies the same
obstacle problem with source \(f(v+\varkappa)\) and initial condition
\[
    R^\varkappa(0,v)=\max\{B(v+\varkappa),p(0)\}.
\]
Since both \(B\) and \(f\) are strictly decreasing,
\[
    R^\varkappa(0,v)\le R(0,v),
    \qquad
    f(v+\varkappa)\le f(v).
\]
Hence, by the comparison principle
(in the growth class of Lemma~\ref{lem:existence-R}),
\(R^\varkappa\le R\) in \(\overline{\Omega_T}\). Since \(\varkappa>0\) is arbitrary,
\(v\mapsto R(\tau,v)\) is nonincreasing for every \(\tau\in(0,T)\).
The local Sobolev embedding theorem
\cite[Lemma 3.3, p.~80]{Ladyzhenskaya} yields
the H\"older continuity of $\partial_v R$ and hence,
\[
    \partial_vR\le0
    \qquad\text{in }\ \Omega_T.
\]

We next prove strict monotonicity in the noncontact region.
Suppose that
\(\partial_vR(\tau_0,v_0)=0\) for some
\((\tau_0,v_0)\in\{R>p\}\). Choose a cylinder
\(\mathcal{Q}\Subset\{R>p\}\) containing \((\tau_0,v_0)\). By interior
parabolic regularity, \(R\) is smooth in \(\mathcal{Q}\), and differentiating
\((\partial_\tau-\cL)R=f(v)\) with respect to \(v\) gives
\[
    (\partial_\tau-\cL)(\partial_vR)
    =
    f'(v)
    =
    -\frac1\gamma e^{(1-1/\gamma)v}<0
    \qquad\text{in }\ \mathcal{Q}.
\]
Since \(\partial_vR\le0\) and attains its maximum \(0\) at the
interior point \((\tau_0,v_0)\), the strong maximum principle gives
a contradiction. Hence
\[
    \partial_vR<0
    \qquad\text{in }\ \{R>p\}.
\]
\end{proof}

\subsection{Proof of Theorem \ref{thm:contact-geometry-R}}\label{proof:thm:contact-geometry-R}
\begin{proof}
For $\gamma\in(0,1)$, the characterization of the set of contact times
follows from Corollary~\ref{cor:no-contact-outside-T0} and
Lemma~\ref{lem:contact-existence-gamma-less-one}. Moreover,
Lemma~\ref{lem:necessary-contact-R} shows that contact can occur only
for $v>\Gamma(\tau)$ with $\Gamma(\tau)>-\infty$, and hence the noncontact set is nonempty at
every $\tau\in\mathcal T_\gamma$. For $\gamma\in(1,\infty)$,
nonemptiness of the contact set follows from
Lemma~\ref{lem:contact-existence-gamma-greater-one}, while
Lemma~\ref{lem:bounds-gamma-greater-one} shows that the noncontact
set is also nonempty.

Fix $\tau\in\mathcal T_\gamma$. By
Lemma~\ref{lem:state-monotonicity-R}, the map
$v\mapsto R(\tau,v)$ is nonincreasing. If
$R(\tau,v_0)=p(\tau)$ at some $v_0$, then, for every $v\ge v_0$,
$p(\tau)\le R(\tau,v)\le R(\tau,v_0)=p(\tau)$. Hence
$R(\tau,v)=p(\tau)$ for every $v\ge v_0$. Since both the contact
and noncontact sets are nonempty, the contact set is
$[\mathcal V(\tau),\infty)$ with $\mathcal V(\tau)$ finite, by the continuity of $R(\tau,\cdot)-p(\tau)$.

Finally, the inequality involving $\Gamma$ follows from
Lemma~\ref{lem:necessary-contact-R}, since
$(\tau,\mathcal V(\tau))$ itself belongs to the contact set.
\end{proof}

\subsection{Proof of Lemma \ref{lem:free-boundary-lsc}}\label{proof:lem:free-boundary-lsc}
\begin{proof}
Fix $\tau_0\in\mathcal T_\gamma$ and $\varepsilon>0$. By \eqref{non-contact},
$R(\tau_0,\mathcal V(\tau_0)-\varepsilon)>p(\tau_0)$, so by the continuity of
$R-p$ and the openness of $\mathcal T_\gamma$, there is $\delta>0$, with $(\tau_0-\delta,\tau_0+\delta)\subset\mathcal T_\gamma$,
such that $R(\tau,\mathcal V(\tau_0)-\varepsilon)>p(\tau)$ for
$|\tau-\tau_0|<\delta$. By \eqref{non-contact} again,
$\mathcal V(\tau)>\mathcal V(\tau_0)-\varepsilon$ on this interval, which is
the lower semicontinuity of $\mathcal V$.
\end{proof}

\subsection{Proof of Lemma \ref{lem:time-space-derivative-estimate}}\label{proof:lem:time-space-derivative-estimate}
\begin{proof}
Fix $\tau_0\in\mathcal T_\gamma$.
First choose $\kappa>0$ such that $[\tau_0-2\kappa,\tau_0+2\kappa]\subset\mathcal T_\gamma$ and, if $\gamma\in(0,1)$, $2\kappa<\nu(\tau_0)$ with $\nu(\tau_0)$ from Lemma~\ref{lem:contact-existence-gamma-less-one}. Then choose $\overline v>\mathcal V(\tau_0)$ and $\underline v<\mathcal V(\tau_0)$ such that
\[
    R(\tau,\underline v)>p(\tau),
    \qquad
    R(\tau,\overline v)=p(\tau)
    \qquad
    \text{for }\ |\tau-\tau_0|\le2\kappa.
\]
Such an $\underline v$ can be obtained by choosing
$\underline v<
\min_{|\tau-\tau_0|\le2\kappa}\mathcal V(\tau)$,
where the minimum is attained by the lower semicontinuity of
$\mathcal V$. The existence of $\overline v$ follows from
Lemmas~\ref{lem:contact-existence-gamma-less-one} and~\ref{lem:contact-existence-gamma-greater-one}.
Choosing $\overline v$ strictly larger than the resulting
uniform contact threshold, we obtain
$\underline v<\mathcal V(\tau)<\overline v$ for
$|\tau-\tau_0|\le2\kappa$.
Thus, the lateral boundary $v=\underline v$ of
\[
    \mathcal{Q}_{2\kappa}
    :=
    (\tau_0-2\kappa,\tau_0+2\kappa)
    \times(\underline v,\overline v)
\]
lies strictly inside $\{R>p\}$, whereas the lateral boundary $v=\overline v$ lies
in $\{R=p\}$. Since $\partial_vR<0$ in $\{R>p\}$, there exists $c>0$
such that
\begin{equation}\label{strict-left-derivative}
    \partial_vR(\tau,\underline v)\le-c
    \qquad
    \text{for }\ |\tau-\tau_0|\le2\kappa.
\end{equation}

We use a penalized approximation $R_\varepsilon$ to compare
$\partial_\tau R_\varepsilon-p'$ with $-\partial_vR_\varepsilon$
uniformly in $\varepsilon$, and then pass to the limit.
We first regularize the local initial datum, which need not be smooth
at the free boundary.

Set $\tau_1:=\tau_0-2\kappa$ and $g(v):=R(\tau_1,v)$ for $v\in\mathbb R$.
By the local optimal regularity result in \cite{Optimalregularity},
$g\in W^{2,\infty}_{\mathrm{loc}}(\mathbb R)$. Moreover, $g\ge p(\tau_1)$,
$g'\le0$, $g$ is smooth on $(-\infty,\mathcal V(\tau_1))$ with $g'<0$
there, and $g\equiv p(\tau_1)$ on $[\mathcal V(\tau_1),\infty)$.

Fix $\vartheta_0>0$ with $\underline v+2\vartheta_0<\mathcal V(\tau_1)$,
a cutoff $\chi\in C^\infty(\mathbb R)$ with $\chi=0$ on
$(-\infty,\underline v+\vartheta_0]$ and $\chi=1$ on
$[\underline v+2\vartheta_0,\infty)$, and a standard mollifier
$\phi_\varepsilon$, and put
\[
    g_\varepsilon:=\chi\,(g*\phi_\varepsilon)+(1-\chi)\,g .
\]
Then $g_\varepsilon=g$ near $\underline v$, $g_\varepsilon\equiv p(\tau_1)$
on $[\mathcal V(\tau_1)+\varepsilon,\infty)$, $g_\varepsilon\ge p(\tau_1)$,
$g_\varepsilon\to g$ in $W^{2,q}(\underline v,\overline v)$ for every
$1<q<\infty$, and
$\sup_{0<\varepsilon<1}\|g_\varepsilon\|_{W^{2,\infty}(\underline v,\overline v)}<\infty$.
Since $g*\phi_\varepsilon$ is nonincreasing, $g'\le-c'<0$ on
$[\underline v,\underline v+2\vartheta_0]$, and
$|g*\phi_\varepsilon-g|\le C\varepsilon^2$ there,
we also have $g_\varepsilon'\le0$ for all sufficiently small $\varepsilon$.
We also take $\varepsilon<\min\{1,\overline v-\mathcal V(\tau_1)\}$, so that $g_\varepsilon(\overline v)=p(\tau_1)$.
With such $g_\varepsilon$, let $R_\varepsilon$ solve the local penalized problem
\begin{equation}\label{local-penalized-R}
\begin{cases}
    \partial_\tau R_\varepsilon-\mathscr LR_\varepsilon
    =
    f(v)+\omega_\varepsilon(R_\varepsilon-p)
    &\text{in }\mathcal{Q}_{2\kappa},\\
    R_\varepsilon(\tau,\underline v)=R(\tau,\underline v),
    &|\tau-\tau_0|<2\kappa,\\
    R_\varepsilon(\tau,\overline v)=R(\tau,\overline v),
    &|\tau-\tau_0|<2\kappa,\\
    R_\varepsilon(\tau_0-2\kappa,v)=g_\varepsilon(v),
    &\underline v<v<\overline v,
\end{cases}
\end{equation}
where $\omega_\varepsilon\in C^\infty(\mathbb R)$ is a penalty function satisfying
$\omega_\varepsilon\ge0$, $\omega_\varepsilon'\le0$, $\omega_\varepsilon(s)=0$ for all
$s\ge\varepsilon$, and
\[
    \omega_\varepsilon(0)
    =
    \sup_{\mathcal{Q}_{2\kappa}}
    \bigl[p'(\tau)+\beta p(\tau)-f(v)\bigr].
\]
This supremum is positive and independent of $\varepsilon$, because $R(\tau,\overline v)=p(\tau)$ and Lemma~\ref{lem:necessary-contact-R} then gives $f(\overline v)<p'(\tau)+\beta p(\tau)$.
Then $(\partial_\tau-\mathscr L)p-f-\omega_\varepsilon(0)\le0$ in
$\mathcal{Q}_{2\kappa}$. Since $g_\varepsilon\ge p(\tau_0-2\kappa)$ on the initial
boundary and $R\ge p$ on the lateral boundary, the comparison
principle gives
\begin{equation}\label{local-penalty-above-obstacle}
    R_\varepsilon\ge p
    \qquad\text{in }\overline{\mathcal{Q}_{2\kappa}}.
\end{equation}
This implies
$0\le\omega_\varepsilon(R_\varepsilon-p)\le\omega_\varepsilon(0)$ and hence
$\omega_\varepsilon(R_\varepsilon-p)$ is bounded uniformly in $\varepsilon$.

The standard $W^{1,2}_q$ estimate, together with the uniform
$W^{2,\infty}$ bound for $g_\varepsilon$, gives a bound for $R_\varepsilon$ independent
of $\varepsilon$, for each $1<q<\infty$. By compactness and the comparison principle of Lemma~\ref{lem:existence-R} on $\mathcal Q_{2\kappa}$, which identifies every limit with $R$,
\[
    R_\varepsilon\rightharpoonup R
    \qquad\text{in }\ W^{1,2}_q(\mathcal{Q}_{2\kappa}),
\]
and the parabolic Sobolev embedding yields the uniform convergence of
$\partial_vR_\varepsilon$ to $\partial_vR$. Hence, by
\eqref{strict-left-derivative}, for all sufficiently small $\varepsilon>0$,
\begin{equation}\label{local-left-derivative}
    \partial_vR_\varepsilon(\tau,\underline v)\le-\frac c2
    \qquad
    \text{for }\ |\tau-\tau_0|\le2\kappa.
\end{equation}

We first prove the nonpositivity of $\partial_v R_\varepsilon$ in $\mathcal{Q}_{2\kappa}$.
For convenience, define
\[
    \mathscr L_\varepsilon
    :=
    \mathscr L+\omega_\varepsilon'(R_\varepsilon-p),
\]
where the last term is understood as a zeroth order coefficient.
Differentiating \eqref{local-penalized-R} with respect to $v$ gives
\[
    (\partial_\tau-\mathscr L_\varepsilon)\partial_vR_\varepsilon
    =
    f'(v)<0.
\]
On the initial boundary, $\partial_vR_\varepsilon=g_\varepsilon'\le0$. The sign on the
lateral boundary $v=\underline v$ follows from \eqref{local-left-derivative}. On
the lateral boundary $v=\overline v$, $R_\varepsilon(\tau,\overline v)=p(\tau)$ and
\eqref{local-penalty-above-obstacle} imply
$\partial_vR_\varepsilon(\tau,\overline v)\le0$. Since $\omega_\varepsilon'\le0$, the
maximum principle yields
\begin{equation}\label{local-monotonicity-penalty}
    \partial_vR_\varepsilon\le0
    \qquad\text{in }\overline{\mathcal{Q}_{2\kappa}}.
\end{equation}

We next obtain an $\varepsilon$-independent bound for
$\partial_\tau R_\varepsilon-p'$ in $\mathcal Q_{2\kappa}$ without using
any regularity at the corners where the initial and lateral boundaries meet. Let
\[
    K:=\sup_{0<\varepsilon<1}\|\mathscr Lg_\varepsilon\|_{L^\infty(\underline v,\overline v)}
    +\|f\|_{L^\infty(\underline v,\overline v)}+\omega_\varepsilon(0)
    +\|p'\|_{L^\infty(0,T)}
    +\sup_{|\tau-\tau_0|\le2\kappa}\bigl|\partial_\tau R(\tau,\underline v)\bigr|,
\]
which is finite and independent of $\varepsilon$ by the uniform
$W^{2,\infty}$ bound for $g_\varepsilon$ and by the interior Schauder
estimate for $R$ near the lateral boundary $v=\underline v$, which lies compactly
inside $\{R>p\}$ where $R$ solves $\partial_\tau R-\mathscr LR=f$
\citep{Friedman}.

Since $g_\varepsilon\ge p(\tau_1)$ and
$K\ge\|p'\|_{L^\infty(0,T)}$, the upper function
$g_\varepsilon(v)+K(\tau-\tau_1)$ satisfies $g_\varepsilon(v)+K(\tau-\tau_1)\ge p(\tau)$.
Thus its penalty is bounded above by $\omega_\varepsilon(0)$,
while for the lower function we only use $\omega_\varepsilon\ge0$.
By direct computation, the choice of $K$ and $\beta>0$ then imply
that $g_\varepsilon(v)\pm K(\tau-\tau_1)$ are a supersolution and
a subsolution, respectively, of the penalized equation in
$\mathcal Q_{2\kappa}$.
On $\partial_p\mathcal Q_{2\kappa}$ the first lies above and the second lies
below the boundary data in \eqref{local-penalized-R}. On the
initial slice they equal $g_\varepsilon$, on $v=\overline v$ they equal
$p(\tau_1)\pm K(\tau-\tau_1)$, and on $v=\underline v$ they equal
$R(\tau_1,\underline v)\pm K(\tau-\tau_1)$. Because $\omega_\varepsilon$
is nonincreasing, the difference of a supersolution and a solution
satisfies a linear inequality with a nonnegative zeroth order
coefficient, and the comparison principle gives
\begin{equation}\label{local-initial-lipschitz}
    |R_\varepsilon(\tau,v)-g_\varepsilon(v)|\le K(\tau-\tau_1)
    \qquad\text{in }\overline{\mathcal Q_{2\kappa}}.
\end{equation}
Now fix $h\in(0,\kappa)$ and set, on
$\mathcal Q^h:=(\tau_1,\tau_0+2\kappa-h)\times(\underline v,\overline v)$,
\[
    E_h(\tau,v):=\frac{R_\varepsilon(\tau+h,v)-R_\varepsilon(\tau,v)}{h}
    -\frac{p(\tau+h)-p(\tau)}{h}.
\]
Subtracting the penalized equations gives, for some bounded $\lambda_h\le0$,
\[
    (\partial_\tau-\mathscr L-\lambda_h)E_h
    =-\frac{p'(\tau+h)-p'(\tau)}{h}-\beta\,\frac{p(\tau+h)-p(\tau)}{h},
\]
whose right hand side is uniformly bounded since $p'$ is Lipschitz.
By \eqref{local-initial-lipschitz} and the lateral boundary data,
$|E_h|\le2K$ on $\partial_p\mathcal Q^h$.
As $\beta-\lambda_h\ge\beta>0$, the maximum principle gives
$|E_h|\le C$ independently of $\varepsilon$ and $h$.
Letting $h\to0^+$ yields
\begin{equation}\label{local-time-bound}
    \bigl|\partial_\tau R_\varepsilon-p'\bigr|\le C
    \qquad\text{in }\mathcal{Q}_{2\kappa},
\end{equation}
where $C$ is independent of $\varepsilon$. Differentiating
\eqref{local-penalized-R} with respect to $\tau$ gives, almost
everywhere,
\[
    (\partial_\tau-\mathscr L_\varepsilon)
    (\partial_\tau R_\varepsilon-p')
    =
    -p''-\beta p',
\]
with a bounded right hand side.

Consider now the smaller cylinder
\[
    \mathcal{Q}_\kappa
    :=
    (\tau_0-\kappa,\tau_0+\kappa)
    \times(\underline v,\overline v).
\]
Since $f'<0$ on $[\underline v,\overline v]$, we may choose $C_0>0$,
independently of $\varepsilon$, sufficiently large that
\[
    -C_0f'(v)
    \ge
    C+2\kappa\|p''+\beta p'\|_{L^\infty(0,T)}
    \qquad
    \text{for }\underline v\le v\le\overline v.
\]
Increasing $C_0$ further if necessary, we may also assume that
\[
    \frac{C_0c}{2}
    \ge
    2\kappa
    \sup_{|\tau-\tau_0|\le\kappa}
    \bigl|\partial_\tau R(\tau,\underline v)-p'(\tau)\bigr|.
\]
With this constant $C_0$, define
\[\Phi(\tau,v):=(\tau-\tau_0+\kappa)(\partial_\tau R_\varepsilon-p')
    +C_0(-\partial_vR_\varepsilon)
    \qquad \text{in }\ \mathcal{Q}_\kappa.\]
Using \eqref{local-time-bound} and the equations for
$\partial_\tau R_\varepsilon-p'$ and $\partial_vR_\varepsilon$, we have
\[
\begin{aligned}
(\partial_\tau-\cL_\varepsilon)\Phi
&=
(\partial_\tau-\cL_\varepsilon)
\Bigl[
    (\tau-\tau_0+\kappa)(\partial_\tau R_\varepsilon-p')
    +C_0(-\partial_vR_\varepsilon)
\Bigr]
\\
&=
\partial_\tau R_\varepsilon-p'
-(\tau-\tau_0+\kappa)(p''+\beta p')
-C_0f'(v)
\\
&\ge0
\end{aligned}
\]
in $\mathcal{Q}_\kappa$.
We next verify the sign of $\Phi$ on the parabolic boundary of $\mathcal{Q}_\kappa$. On
$\tau=\tau_0-\kappa$, the first term vanishes, so $\Phi$ is
nonnegative by \eqref{local-monotonicity-penalty}. On
$v=\overline v$, we have $\partial_\tau R_\varepsilon-p'=0$ and
$\partial_vR_\varepsilon\le0$, so it is again nonnegative. Finally, on
$v=\underline v$, \eqref{local-left-derivative} and the choice of
$C_0$ give
\[
\begin{aligned}
&
(\tau-\tau_0+\kappa)(\partial_\tau R_\varepsilon-p')
+C_0(-\partial_vR_\varepsilon)
\\
&\qquad\ge
-2\kappa
\bigl|\partial_\tau R(\tau,\underline v)-p'(\tau)\bigr|
+\frac{C_0c}{2}
\ge0.
\end{aligned}
\]
The comparison principle therefore yields
\[
    \partial_\tau R_\varepsilon-p'
    \ge
    -\frac{C_0}{\tau-\tau_0+\kappa}
    \bigl(-\partial_vR_\varepsilon\bigr)
    \qquad\text{in }\mathcal{Q}_\kappa.
\]
Applying the same argument to
   $\Psi(\tau,v):= (\tau-\tau_0+\kappa)(\partial_\tau R_\varepsilon-p')
    -C_0(-\partial_vR_\varepsilon)$,
we also see that $(\partial_\tau -\cL_\varepsilon)\Psi\le 0$ holds in $\mathcal{Q}_\kappa$ with 
$\Psi\le 0$ on the parabolic boundary of $\mathcal{Q}_\kappa$.
This implies
\[
    \partial_\tau R_\varepsilon-p'
    \le
    \frac{C_0}{\tau-\tau_0+\kappa}
    \bigl(-\partial_vR_\varepsilon\bigr)
    \qquad \text{in }\ \mathcal{Q}_\kappa.
\]

Combining these results, we deduce that
\begin{equation}\label{two-sided-derivative-estimate-penalty}
    \bigl|\partial_\tau R_\varepsilon-p'\bigr|
    \le
    \frac{C_0}{\tau-\tau_0+\kappa}
    \bigl(-\partial_vR_\varepsilon\bigr)
    \qquad
    \text{in }\ \mathcal{Q}_\kappa.
\end{equation}
Multiplying each of the two one sided inequalities in
\eqref{two-sided-derivative-estimate-penalty} by an arbitrary
nonnegative test function and integrating over $\mathcal{Q}_\kappa$, we may
pass to the limit as $\varepsilon\to0^+$ by the weak
$W^{1,2}_q(\mathcal{Q}_\kappa)$ convergence. 
Since the test function was chosen arbitrarily, we deduce that
\[
    \bigl|\partial_\tau R-p'\bigr|
    \le
    \frac{C_0}{\tau-\tau_0+\kappa}
    \bigl(-\partial_vR\bigr)
\]
almost everywhere in $\mathcal{Q}_\kappa$.
In the noncontact region $\{R>p\}$, the
interior Schauder regularity gives the continuity of
these derivatives, so the inequality holds pointwise in $\mathcal{Q}_\kappa\cap \{R>p\}$. Finally,
$\tau-\tau_0+\kappa\ge\kappa/2$ whenever
$|\tau-\tau_0|<\kappa/2$, and therefore
\[
    \bigl|\partial_\tau R(\tau,v)-p'(\tau)\bigr|
    \le
    \frac{2C_0}{\kappa}
    \bigl(-\partial_vR(\tau,v)\bigr).
\]

Relabeling $\kappa/2$ as $\kappa$ gives the assertion.
\end{proof}

\subsection{Proof of Lemma \ref{lem:free-boundary-holder}}\label{proof:lem:free-boundary-holder}
\begin{proof}

Fix $\tau_0\in\mathcal T_\gamma$, and take
$\kappa,\underline v,\overline v,C$ from
Lemma~\ref{lem:time-space-derivative-estimate}. Choose an open interval
$I$ containing $\tau_0$ such that
$\overline I\subset(\tau_0-\kappa,\tau_0+\kappa)$, and set
$\mathcal Q:=I\times(\underline v,\overline v)$. Then
$\underline v<\mathcal V(\tau)<\overline v$ for every
$\tau\in\overline I$.

    Note that $R-p$ satisfies
    \[ (\partial_\tau-\cL)(R-p) = f(v)-p'(\tau)-\beta p(\tau) \qquad\text{in }\ \mathcal{Q}\cap\{v<\mathcal{V}(\tau)\}. \]

By lower semicontinuity and the choice of $\underline v$, we may choose
\[
    0<\delta_1<\min_{\tau\in\overline I}
    \bigl(\mathcal V(\tau)-\underline v\bigr).
\]
The estimate in Lemma~\ref{lem:time-space-derivative-estimate}, with the
same constant $C$, then holds whenever
\[
    \tau\in \overline{I},
    \qquad
    \mathcal V(\tau)-\delta_1<v<\mathcal V(\tau).
\] 
The function
$\tau\mapsto p'(\tau)+\beta p(\tau)-f(\mathcal V(\tau))$
is strictly positive by Lemma~\ref{lem:necessary-contact-R} and lower
semicontinuous because $\mathcal V$ is lower semicontinuous and $f$
is continuous and decreasing. Compactness of $\overline I$ therefore gives
\[
    \nu:=
    \min_{\tau\in\overline I}\ 
    \Bigl[\ 
        p'(\tau)+\beta p(\tau)-f\big(\mathcal V(\tau)\big)
    \ \Bigr]
    >0.
\]

Spatial smooth fit and the uniform continuity of
$R$ and $\partial_vR$ on $\overline{\mathcal Q}$ imply that $R-p$ and
$\partial_vR$ tend to zero uniformly for $\tau\in\overline I$ as $v$
approaches $\mathcal V(\tau)$ from the noncontact region.
The estimate $|\partial_\tau R-p'|\le C(-\partial_vR)$ from
Lemma~\ref{lem:time-space-derivative-estimate}, with the same $C$
for all $\tau\in\overline I$, gives the same convergence for
$\partial_\tau R-p'$. Thus
\[
    R-p,\quad \partial_vR,\quad \partial_\tau R-p'\longrightarrow0
\]
uniformly for $\tau\in\overline I$ as
$v\to\mathcal V(\tau)$
from the noncontact region. Substituting these limits into the
equation and using the uniform continuity of $f$ on
$[\underline v,\overline v]$ gives
\[
    \lim_{v\to\mathcal V(\tau)^-}
    \frac{\theta^2}{2}\partial_{vv}R(\tau,v)
    =p'(\tau)+\beta p(\tau)-f(\mathcal V(\tau))
    \ge\nu,
\]
with uniform convergence in $\tau$. Hence, for some
$\delta\in(0,\delta_1)$,
\[\frac{\theta^2}{2}\partial_{vv}R(\tau,v)>\frac{\nu}{2}
\qquad \text{for all }\ v\in (\mathcal V(\tau)-\delta,\mathcal V(\tau))
\ \text{ and }\ \tau\in I. 
\]

Integrating twice and using smooth fit yields
\[R(\tau,v)-p(\tau)>\frac{\nu}{2\theta^2}(v-\mathcal{V}(\tau))^2
\qquad \text{for all }\ v\in (\mathcal V(\tau)-\delta,\mathcal V(\tau))
\ \text{ and }\ \tau\in I.\]
By Lemma~\ref{lem:existence-R} and the boundedness of $p'$, we have
    $\partial_\tau R-p'
    \in L^\infty_{\mathrm{loc}}(\Omega_T)$.
Hence, since $\overline{\mathcal{Q}}\Subset\Omega_T$, there exists $C_1>0$ such that
\[
    \bigl|\partial_\tau R-p'\bigr|\le C_1
    \qquad\text{a.e. in }\mathcal{Q}.
\]
Set $\delta_0:=\nu\delta^2/(8\theta^2C_1)$, and fix
$\tau_1,\tau_2\in I$ such that
$|\tau_1-\tau_2|<\delta_0$. We first claim that
$|\mathcal V(\tau_1)-\mathcal V(\tau_2)|<\delta$.
Suppose, to the contrary and without loss of generality, that
$\mathcal V(\tau_1)-\mathcal V(\tau_2)\ge\delta$. Then
$\mathcal V(\tau_1)-\delta/2>\mathcal V(\tau_2)$, and hence
$R(\tau_2,\mathcal V(\tau_1)-\delta/2)=p(\tau_2)$. On the other
hand, the preceding quadratic estimate yields
\[
    R\left(\tau_1,\mathcal V(\tau_1)-\frac{\delta}{2}\right)
    -p(\tau_1)
    >
    \frac{\nu\delta^2}{8\theta^2}.
\]
Therefore,
\[
\begin{aligned}
    \frac{\nu\delta^2}{8\theta^2}
    &<
    \left|
        (R-p)\left(\tau_1,\mathcal V(\tau_1)-\frac{\delta}{2}\right)
        -
        (R-p)\left(\tau_2,\mathcal V(\tau_1)-\frac{\delta}{2}\right)
    \right| \\
    &\le C_1|\tau_1-\tau_2|
    <
    \frac{\nu\delta^2}{8\theta^2},
\end{aligned}
\]
which is a contradiction. Thus the claim follows.

Interchanging $\tau_1$ and $\tau_2$ if necessary, suppose that
$\mathcal V(\tau_1)>\mathcal V(\tau_2)$. By the preceding claim,
$\mathcal V(\tau_2)\in
(\mathcal V(\tau_1)-\delta,\mathcal V(\tau_1))$. Hence, applying
the quadratic estimate at time $\tau_1$ with
$v=\mathcal V(\tau_2)$ gives
\[
    \frac{\nu}{2\theta^2}
    \bigl|\mathcal V(\tau_1)-\mathcal V(\tau_2)\bigr|^2
    \le
    R\bigl(\tau_1,\mathcal V(\tau_2)\bigr)-p(\tau_1).
\]
Since
$(R-p)(\tau_2,\mathcal V(\tau_2))=0$, we obtain
\[
    \frac{\nu}{2\theta^2}
    \bigl|\mathcal V(\tau_1)-\mathcal V(\tau_2)\bigr|^2
    \le
    (R-p)\bigl(\tau_1,\mathcal V(\tau_2)\bigr)
    -(R-p)\bigl(\tau_2,\mathcal V(\tau_2)\bigr)
    \le
    C_1|\tau_1-\tau_2|.
\]
Thus, we conclude that
\[
    \big\vert\mathcal V(\tau_1)-\mathcal V(\tau_2)\big\vert
    \le
    \left(\frac{2\theta^2C_1}{\nu}\right)^{1/2}
    |\tau_1-\tau_2|^{1/2}
\]
whenever $\tau_1,\tau_2\in I$ and
$|\tau_1-\tau_2|<\delta_0$.

Since $\tau_0$ was arbitrary, this proves
$\mathcal V\in C_{\mathrm{loc}}^{1/2}(\mathcal T_\gamma)$.

Finally, by the continuity of $\mathcal V$, every point
$(\tau,v)$ satisfying $R(\tau,v)=p(\tau)$ with
$v>\mathcal V(\tau)$ admits a neighborhood contained in the contact
region $\{R=p\}$. Hence,
\[
    \operatorname{int}\{R=p\}
    \supset
    \{R=p\}\setminus
    \{(\tau,\mathcal V(\tau)):\tau\in\mathcal T_\gamma\},
\]
and consequently,
\[
    \partial\{R=p\}
    \subset
    \{(\tau,\mathcal V(\tau)):\tau\in\mathcal T_\gamma\}.
\]
Since $(\tau,\mathcal{V}(\tau))$ is a free boundary point for every $\tau\in \mathcal{T}_\gamma$, we conclude that
\[\partial\{R>p\}=\partial\{R=p\}=\{(\tau,\mathcal V(\tau)):\tau\in\mathcal T_\gamma\}.\]
\end{proof}

\subsection{Proof of Proposition \ref{prop:boundary-higher-regularity}}
\begin{proof}
Put $W=R-p$, and choose compact intervals
$\mathcal I\subset\operatorname{int}\mathcal I_1\subset \mathcal I_1\Subset\mathcal T_\gamma$.
Local optimal regularity, Lemma~\ref{lem:time-space-derivative-estimate},
and the positivity argument for $R_{vv}$ in the proof of
Lemma~\ref{lem:free-boundary-holder} give constants $c,M,L,\delta>0$ such that
\[
 c\le W_{vv}\le M,\qquad |W_\tau|\le L(-W_v)
 \quad\text{if }\tau\in \mathcal I_1,\quad
 0<\mathcal V(\tau)-v<\delta.
\]
The constants and the tube width can be chosen
uniformly by a finite covering of the compact boundary graph over
$\mathcal I_1$.

We first prove the Lipschitz assertion.
The quadratic lower bound in the proof of
Lemma~\ref{lem:free-boundary-holder} and strict state monotonicity give,
for sufficiently small $\eta>0$, a
unique level curve $v_\eta$ in this tube defined by
$W(\tau,v_\eta(\tau))=\eta$. The implicit function theorem gives
\[
 v_\eta'(\tau)=
 \frac{W_\tau(\tau,v_\eta(\tau))}{-W_v(\tau,v_\eta(\tau))},
 \qquad |v_\eta'|\le L.
\]
After decreasing $c$ if necessary, this lower bound gives
$0\le\mathcal V-v_\eta\le(2\eta/c)^{1/2}$, uniformly on $\mathcal I_1$.
Thus $\mathcal V$ is $L$-Lipschitz on $\mathcal I_1$.
In particular, this part of the proof already supplies the Lipschitz
property of $\log b$ used in Lemma~\ref{lem:reflection-moments}.

For the higher regularity, set $u_1=W_\tau$, $u_2=-W_v$, and
$\mathfrak d=\theta^2/2>0$. In the continuation part of the tube,
\[
\begin{aligned}
 (\partial_\tau-\mathfrak d\partial_{vv}-b_0\partial_v+\beta)u_1
 &=-p''-\beta p',\\
 (\partial_\tau-\mathfrak d\partial_{vv}-b_0\partial_v+\beta)u_2
 &=-f'.
\end{aligned}
\]
Both right hand sides are bounded uniformly in this tube, where
$p''\in L^\infty$ is understood almost everywhere.
The function $u_2$ extends continuously as zero to the free boundary
by smooth fit. The estimate $|u_1|\le Lu_2$ then shows
that $u_1$ also extends continuously as zero there. Thus both
equations have continuous zero boundary data.

Flatten the boundary by setting
$U_i(\tau,x)=u_i(\tau,\mathcal V(\tau)-x)$ for $0<x<\delta$.
Since $\mathcal V$ is Lipschitz, these functions satisfy, almost everywhere,
\[
 U_{i,\tau}-\mathfrak d U_{i,xx}
 +(b_0+\mathcal V'(\tau))U_{i,x}+\beta U_i=g_i,
 \qquad U_i(\tau,0)=0,
\]
where $g_1=-p''-\beta p'$ and
$g_2(\tau,x)=-f'(\mathcal V(\tau)-x)$.
The drift and the sources are bounded, and $U_2(\tau,x)\ge cx$
by $W_{vv}\ge c$ and spatial smooth fit.

By localization and the boundary $L^q$ theory
\citep[Chapter~IV, Theorem~9.1]{Ladyzhenskaya}, we obtain
$U_i\in W_q^{1,2}(\mathcal I\times(0,\delta/2))$ for every $q>3$,
with regularity up to the flat boundary $x=0$.
Choose smooth cutoffs $\chi_0(\tau)$, supported in
$\operatorname{int}\mathcal I_1$ and equal to one near $\mathcal I$,
and $\chi_1(x)$, equal to one on $[0,\delta/2]$ and vanishing near
$x=\delta$. Then $Z_i=\chi_0\chi_1U_i$ has zero initial and spatial
boundary data. In its equation, the terms containing derivatives of
$\chi_1$ are supported away from $x=0$ and bounded by interior
regularity, while the time cutoff term $\chi_0'\chi_1U_i$ is bounded
because $U_i$ is bounded. Thus the right hand side is bounded, and
the boundary $L^q$ existence theorem gives a $W_q^{1,2}$ solution
with this right hand side and zero data. The function $Z_i$ is
continuous up to the boundary and is a strong solution in the interior.
Uniqueness of continuous strong solutions with bounded measurable drift identifies the localized function with the $W^{1,2}_q$ solution given by the boundary $L^q$ theory. The same localization is used for the Schauder estimates below.

Write $C_p^s=C^{s,s/2}$ for the parabolic H\"older class.
The parabolic Sobolev embedding
\citep[Chapter~II, Lemma~3.3]{Ladyzhenskaya} yields
$U_{i,x}\in C_p^{1-3/q}$ up to $x=0$.
Since $U_{2,x}(\tau,0)\ge c$, the function
\[
 \chi(\tau)=\frac{U_{1,x}(\tau,0)}{U_{2,x}(\tau,0)}
 \quad\text{belongs to }C^{(1-3/q)/2}(\mathcal I).
\]
The identity
$U_i(\tau,x)/x=\int_0^1U_{i,x}(\tau,sx)\,ds$
shows that $U_1/U_2\to\chi$ uniformly on $\mathcal I$ as $x\to0^+$.

The uniform convergence $v_\eta\to\mathcal V$ and this quotient
estimate imply $v_\eta'\to\chi$ uniformly on $\mathcal I$.
Passing to the limit in
$v_\eta(\tau)-v_\eta(s)=\int_s^\tau v_\eta'(u)\,du$
shows that $\mathcal V'=\chi$.
For any $0<\alpha<1/2$, choose $q>3/(1-2\alpha)$.
Then $\mathcal V\in C^{1,\alpha}(\mathcal I)$ for every $\alpha<1/2$.
All constants may depend on $\mathcal I_1$ and on its distance from the endpoints
of the component of the active time set that contains it. The estimates need not
be uniform near times at which the boundary appears or disappears.

Suppose $p\in C^{m+1}((0,T))$ for an integer $m\ge2$,
and fix $0<\alpha<\tfrac12$. If $\mathcal V'\in C^\rho$ locally,
with $0<\rho<m-1$ and $2\rho$ noninteger, then the coefficients and
sources in the equations for $U_i$ belong locally to $C_p^{2\rho}$.
The local boundary Schauder estimates on smaller cylinders away from
the initial time
\citep[Chapter~IV, Theorems~5.2 and~10.1]{Ladyzhenskaya}
give $U_i\in C_p^{2\rho+2}$ up to $x=0$.
Thus $U_{i,x}(\cdot,0)\in C^{\rho+1/2}$, and the formula
$\mathcal V'=U_{1,x}(\cdot,0)/U_{2,x}(\cdot,0)$ gives
$\mathcal V'\in C^{\rho+1/2}$ locally.
Starting with $\rho=\alpha$ and applying this step $2(m-1)$ times yields
$\mathcal V\in C^{m,\alpha}$ locally. The largest input exponent is
$m-\tfrac32+\alpha<m-1$, so the stated regularity of $p$ suffices
at every step. The smooth case follows by taking $m$ arbitrary.
\end{proof}

\subsection{Proof of Lemma \ref{lem:boundary-T0}}\label{proof:lem:boundary-T0}
\begin{proof}
Suppose, to the contrary, that there exist
$\tau_n\in\mathcal T_0$ with $\tau_n\to\tau_*$ and
$M\in\mathbb R$ such that
\[
    \mathcal V(\tau_n)\le M
    \qquad\text{for all }\ n\in \mathbb N.
\]
Since the contact set at time $\tau_n$ is
$[\mathcal V(\tau_n),\infty)$, we have
\[
    R(\tau_n,M)=p(\tau_n)
    \qquad\text{for all }\ n\in \mathbb N.
\]
By the continuity of $R$ and $p$, taking $n\to \infty$ yields
    $R(\tau_*,M)=p(\tau_*)$.
Moreover, since $\mathcal T_0$ is open,
we have $\tau_*\notin\mathcal T_0$. Hence
Corollary~\ref{cor:no-contact-outside-T0} gives
\[
    R(\tau_*,v)>p(\tau_*)
    \qquad\text{for every }v\in\mathbb R,
\]
which is a contradiction.
\end{proof}

\subsection{Proof of Lemma \ref{lem:endpoint-zero}}\label{proof:lem:endpoint-zero}
\begin{proof}
\emph{Lower bound.} For $v<v_B$ we have $R(0,v)=B(v)>p(0)$, so
$R(\tau,v)>p(\tau)$, and hence $\mathcal V(\tau)>v$, for all sufficiently
small $\tau>0$. Letting $v\to v_B^-$ gives
$\liminf_{\tau\to0^+}\mathcal V(\tau)\ge v_B$. If $\Gamma(0)$ is finite,
then $\mathcal V(\tau)>\Gamma(\tau)\to\Gamma(0)$ for $\tau\in\mathcal T_\gamma$,
so $\liminf_{\tau\to0^+}\mathcal V(\tau)\ge\Gamma(0)$. If $\gamma\in(0,1)$
and $\Gamma(0)=+\infty$, then $\Gamma(\tau)\to+\infty$, and
$\mathcal V\ge\Gamma$ in the extended sense (with $\mathcal V=+\infty$ off the
active set) gives $\mathcal V(\tau)\to+\infty$. It remains to prove the upper
bound when $\max\{v_B,\Gamma(0)\}<+\infty$.

\emph{Upper bound.} Fix $v_0>\max\{v_B,\Gamma(0)\}$. Since $v_0>\Gamma(0)$,
the definition of $\Gamma(0)$ and the strict monotonicity of $f$ give
$f(v_0)<p'(0)+\beta p(0)$, and this is immediate when $\Gamma(0)=-\infty$.
Choose $\kappa>0$ so small that $v_0-\kappa>v_B$ and $f<p'(0)+\beta p(0)$ on
$[v_0-\kappa,v_0+\kappa]$. By continuity there are $\delta,\eta>0$ such that
\begin{equation}\label{endpoint-negative-source}
    f(v)-p'(\tau)-\beta p(\tau)\le-\eta
    \qquad\text{for }0\le\tau\le\delta,\ |v-v_0|\le\kappa.
\end{equation}
Put $W:=R-p$, so that $W\ge0$ and
$(\partial_\tau-\mathscr L)W\ge f-p'-\beta p$ almost everywhere, with
equality in $\{W>0\}$. For $c>0$ sufficiently small,
\[
    (\partial_\tau-\mathscr L)\bigl[c(v-v_0)^2\bigr]
    =-\theta^2c-2cb_0(v-v_0)+\beta c(v-v_0)^2
    \ge-\eta
    \qquad\text{for }|v-v_0|\le\kappa,
\]
so by \eqref{endpoint-negative-source} the function $c(v-v_0)^2$ is a
supersolution of the problem for $W$ on $(0,\delta)\times(v_0-\kappa,v_0+\kappa)$.
Since $v_0-\kappa>v_B$, the initial condition gives $W(0,\cdot)=0$ on
$[v_0-\kappa,v_0+\kappa]$. With $c$ fixed, continuity gives
$W(\tau,v_0\pm\kappa)\le c\kappa^2$ for $0\le\tau\le\delta$ after decreasing $\delta$ if necessary. Thus
$W\le c(v-v_0)^2$ on the parabolic boundary of the cylinder, and the
comparison principle of Lemma~\ref{lem:existence-R} on this bounded cylinder, applied to $\zeta=p+c(v-v_0)^2$, yields $0\le W\le c(v-v_0)^2$ throughout. Evaluating at
$v=v_0$ gives $R(\tau,v_0)=p(\tau)$, hence $\mathcal V(\tau)\le v_0$, for
$0<\tau\le\delta$. Since $v_0>\max\{v_B,\Gamma(0)\}$ was arbitrary,
$\limsup_{\tau\to0^+}\mathcal V(\tau)\le\max\{v_B,\Gamma(0)\}$, which
together with the lower bound proves the result.
\end{proof}

\subsection{Proof of Proposition \ref{prop:construct-Q}}
\begin{proof}
The unconstrained marginal value $G^0$ solves
$G_t^0+\mathcal LG^0+u(I(\xi))=0$ with terminal value $B(\log\xi)$.
Subtract its stopped martingale identity from
\eqref{eq:R-stopping-representation}. This represents $D$ as an
optimal stopping gain with reward $P-G^0$ before the horizon and terminal
reward $(P(T)-B)^+$. In particular $D\ge0$.

Since $K$ is positive and bounded away from zero and $P$ has a fixed
strict sign, there is $d_*>0$ such that $P(s)-G^0(s,\eta)<0$
for every $s\in[0,T]$ and $0<\eta<d_*$. The positive part of either
stopping reward is bounded by $C(1+\eta^{a^+})$ and is zero below
$d_*$. Write $\mathcal Y_s^{t,\xi}=\xi E_s^t$ and
$S=\sup_{t\le s\le T}E_s^t$. Every positive stopping gain is thus
bounded by
\[
C\{1+(\xi S)^{a^+}\}\mathbf1_{\{\xi S\ge d_*\}}.
\]
All moments of $S$ are bounded uniformly over $t$. For any $m>0$,
use $\mathbf1_{\{\xi S\ge d_*\}}\le(\xi S/d_*)^m$ and
$\xi\le1$ to obtain \eqref{eq:D-small-state}. These estimates also
give $D\le C(1+\xi^{a^+})$ for $\xi\ge1$.

The integral in \eqref{eq:Q-constructive} therefore converges at zero.
Direct differentiation proves
$\widehat Q-\xi\widehat Q_\xi=G$ and
$\widehat Q_{\xi\xi}=-G_\xi/\xi$. Since $a<1$, integration at infinity
and the bound for small states prove \eqref{eq:Q-growth-explicit}.
At $T$ the same construction from $G(T,\xi)=\max\{B(\log\xi),P(T)\}$
gives exactly $\widetilde U_B$. The normalization for small states fixes
the otherwise arbitrary linear term.

For the time equation put
\[
k(t,\eta):=\beta P(t)-P'(t)-u(I(\eta)),\qquad
\mathcal{C}_t:=\{\eta:G(t,\eta)=P(t)\}.
\]
The obstacle equation and smooth fit give, in distributions,
\[
D_t+\mathcal LD=-k(t,\eta)\mathbf1_{\mathcal{C}_t}.
\]
Here $k\ge0$ on $\mathcal{C}_t$, by Lemma~\ref{lem:necessary-contact-R}.
The operator $\phi\mapsto-\xi\int_0^\xi\phi(\eta)\eta^{-2}\,d\eta$
commutes with $\partial_t+\mathcal L$. One verifies this by integration
by parts on $(\varepsilon,\xi)$ and lets $\varepsilon\to0^+$, and
\eqref{eq:D-small-state} removes the boundary terms at the lower end. For weak
derivatives the same calculation is made against a compactly supported
test function. Local parabolic estimates applied below $d_*$ justify
the corresponding derivative bounds at the lower endpoint.
Consequently,
\begin{equation}\label{eq:Q-HJB-residual}
\widehat Q_t+\mathcal L\widehat Q+\widetilde u(\xi)+\epsilon\xi
=\xi\int_0^\xi\frac{k(t,\eta)\mathbf1_{\mathcal{C}_t}}{\eta^2}\,d\eta.
\end{equation}
The right side is nonnegative and is zero in the continuation region
and at its boundary. It is continuous at interior dates, since
$\mathcal{C}_t=[b(t),\infty)$ when nonempty and the reciprocal boundary is
continuous, including at disappearance times. Also $G_\xi$ is
continuous. Thus \eqref{eq:Q-HJB-residual} identifies a continuous
time derivative and proves the asserted $C^{1,2}$ regularity.
Together with $G-P\ge0$, this proves the reduced variational inequality.
\end{proof}

\subsection{Proof of Lemma \ref{lem:reflection-moments}}
\begin{proof}
For $0<\gamma<1$, contact requires
$u(I(\xi))<\beta P(t)-P'(t)$. The right hand side is bounded,
whereas the left side tends to $+\infty$ as $\xi\to0^+$.
For $\gamma>1$, the uniform estimate for $\tau\in[0,T]$
in the proof of Lemma~\ref{lem:bounds-gamma-greater-one} yields
$b(t)\ge e^{v_*}$ for $0\le t<T$.
Include the positive terminal threshold $q_B$ in the lower bound to
obtain $b_*>0$. The boundary regularity and endpoint results give the
assertions about $1/b$, including at $t=0$ by Remark~\ref{rem:endpoint-T}.

The running supremum now gives \eqref{eq:A-moment-bound}.
Both $\sup_s\mathcal Y_s^{t,y}$ and
$(\inf_s\mathcal Y_s^{t,y})^{-1}$ have moments of every order on
a finite horizon. Hence
\[
\frac{\inf_s\mathcal Y_s^{t,y}}
{1+z+b_*^{-1}\sup_s\mathcal Y_s^{t,y}}
\le\Xi_s^{*,t,y,z}\le\frac{\mathcal Y_s^{t,y}}{1+z}.
\]
The assertions on the moments of consumption and terminal wealth follow.

On any compact interval of finite interior boundary values,
$\log b$ is Lipschitz by the part of the proof of
Proposition~\ref{prop:boundary-higher-regularity} that concerns level curves. The
higher regularity is not needed here. Thus
$U_s=\log\Xi_s^*-\log b(s)$ is a continuous semimartingale
with quadratic variation $d\langle U\rangle_s=\theta^2ds$.
The occupation density formula gives
$\int\mathbf1_{\{U_s=0\}}\,d\langle U\rangle_s=0$.
The active time set is open and has a countable compact exhaustion, and
its complement has no finite contact boundary. Initial and terminal
atoms do not affect a time integral. This proves
\eqref{eq:boundary-zero-occupation} (see \citet{RY1999}).
\end{proof}

\subsection{Proof of Proposition \ref{prop:dual-shadow-reflection}}\label{proof:prop:dual-shadow-reflection}
\begin{proof}
Set
\[
 \Upsilon_s^{t,y}:=\frac{\mathcal Y_s^{t,y}}{b(s)}.
\]
By definition,
$A_s^*=(1+z)\vee\sup_{t\le u\le s}\Upsilon_u^{t,y}$ is the smallest
right continuous nondecreasing process with value $1+z$ before intervention that
dominates $\Upsilon^{t,y}$.  Continuity of $1/b$ before $T$ and its terminal left limit show that $A^*$ is c\`adl\`ag, with no interior jumps. Lemma~\ref{lem:reflection-moments} gives its moments, so $\mathcal Z^*=A^*-1$ belongs to $\Pi_t(z)$.  Since $A_s^*\ge \Upsilon_s^{t,y}$,
\[
 \Xi_s^*=\frac{\mathcal Y_s^{t,y}}{A_s^*}\le b(s),
\]
which proves \eqref{eq:state-constraint-reflection}.

The running supremum can increase only when the current value
$\Upsilon_s^{t,y}$ attains the current running maximum.  Therefore, on the support of
$dA_s^*=d\mathcal Z_s^*$,
\[
 A_s^*=\Upsilon_s^{t,y}=\frac{\mathcal Y_s^{t,y}}{b(s)},
\]
whenever the increment is nonzero, and hence $\Xi_s^*=b(s)$.  This remains
true for atoms. If the boundary falls or the initial state lies above the
boundary, $A^*$ jumps to the smallest value that restores
$\Xi_s^*=b(s)$.  Thus \eqref{eq:Skorokhod-support} holds.  Since
$N(s,b(s))=0$ whenever $b(s)<\infty$, while no increase occurs when
$b(s)=+\infty$, \eqref{eq:N-complementarity-exact} follows.
\end{proof}

\subsection{Proof of Theorem \ref{thm:dual-verification}}\label{proof:thm:dual-verification}
\begin{proof}
Fix $(t,y,z)$ and let $\mathcal Z\in\Pi_t(z)$ be arbitrary.  Write
\[
 A_s:=1+\mathcal Z_s,\qquad
 \Xi_s:=\frac{\mathcal Y_s^{t,y}}{A_s},\qquad
 F(s,Y,A):=A \widehat Q\!\left(s,\frac YA\right).
\]
For fixed $(s,Y)$,
\begin{equation}\label{eq:F-A-derivative-proof}
 \partial_A F(s,Y,A)
 =\widehat Q(s,Y/A)-(Y/A)\widehat Q_\xi(s,Y/A)
 =G(s,Y/A)\ge P(s).
\end{equation}
Between jumps of $A$, the generalized It\^o formula and
\eqref{eq:HJB_Q_reduced} give, after discounting,
\begin{align*}
 d\bigl(e^{-\beta(s-t)}F(s,\mathcal Y_s^{t,y},A_s)\bigr)
 &\ge-e^{-\beta(s-t)}
 \Bigl(A_s\widetilde u(\Xi_s)+\mathcal Y_s^{t,y}\epsilon\Bigr)ds\\
 &\quad+e^{-\beta(s-t)}P(s)\,dA_s^c+dM_s,
\end{align*}
where $A^c$ is the continuous finite variation part and $M$ is a local
martingale.  At a jump time $s$,
\begin{align*}
&F(s,\mathcal Y_s^{t,y},A_s)
 -F(s,\mathcal Y_s^{t,y},A_{s-})\\
&\qquad=\int_{A_{s-}}^{A_s}
 G\!\left(s,\frac{\mathcal Y_s^{t,y}}\zeta\right)d\zeta
 \ge P(s)\Delta A_s,
\end{align*}
by \eqref{eq:F-A-derivative-proof}.  Thus, after localization, integration,
and expectation,
\begin{align*}
 (1+z)\widehat Q\!\left(t,\frac y{1+z}\right)
 \le\mathbb E_t\bigg[&
 \int_t^T e^{-\beta(s-t)}
 \Bigl((1+\mathcal Z_s)\widetilde u(\Xi_s)
       +\mathcal Y_s^{t,y}\epsilon\Bigr)ds\\
 &+e^{-\beta(T-t)}(1+\mathcal Z_T)\widetilde U_B(\Xi_T)
 -\int_{[t,T]}e^{-\beta(s-t)}P(s)\,d\mathcal Z_s
 \bigg].
\end{align*}
Taking the infimum over $\mathcal Z$ yields the lower bound for the dual value.

Now take $\mathcal Z=\mathcal Z^{*,t,y,z}$.  By
Proposition~\ref{prop:dual-shadow-reflection}, the control increases only when
$\Xi^*=b$, so $G=P$ on the support of its continuous finite variation part.
At any jump of $A^*$, the state moves through the interval
$\xi\ge b(s)$, which lies entirely in the intervention region. Hence
$G(s,\xi)=P(s)$ throughout the jump and the jump inequality above is an
equality.  The reflected state always lies in the closure of the waiting region. The explicit residual \eqref{eq:Q-HJB-residual} vanishes throughout this closure, so the time inequality is also an equality.  Therefore every inequality in the preceding It\^o
calculation becomes an equality for $\mathcal Z^*$.  Using
$\widehat Q(T,\xi)=\widetilde U_B(\xi)$ proves that its criterion equals
$(1+z)\widehat Q(t,y/(1+z))$.  Hence $\mathcal Z^*$ is optimal and
\eqref{eq:dual-verification-value} follows.  The localization is removed by \eqref{eq:Q-growth-explicit}, the explicit multiplier moment condition, and Lemma~\ref{lem:reflection-moments}. Indeed, after discounting, the absolute diffusion coefficient is bounded by $C(A_s+\mathcal Y_s+A_s^{1/\gamma}\mathcal Y_s^a)$ and is square integrable. Thus the local martingale is a true martingale. This identifies $\widehat Q=Q$ and supplies all of the analytic properties used subsequently.
\end{proof}

\subsection{Proof of Proposition \ref{prop:shadow-welfare-identity}}\label{proof:prop:shadow-welfare-identity}
\begin{proof}
Let $G=Q-\xi Q_\xi$.  In the waiting region, differentiate the continuation
equation in \eqref{eq:HJB_Q_reduced} with respect to $\xi$ and subtract
$\xi$ times the differentiated equation from the original equation.  This
gives
\begin{equation}\label{eq:G-linear-equation-proof}
 G_t+\mathcal L G+\bigl(\widetilde u(\xi)-\xi\widetilde u'(\xi)\bigr)=0.
\end{equation}
Fenchel equality and $\widetilde u'(\xi)=-I(\xi)$ imply
\begin{equation}\label{eq:fenchel-G-running-proof}
 \widetilde u(\xi)-\xi\widetilde u'(\xi)=u(I(\xi)).
\end{equation}
At terminal time, the constrained envelope formula
$\widetilde U_B'(\xi)=-\hat x_B(\xi)$ yields
\begin{equation}\label{eq:G-terminal-proof}
 G(T,\xi)
 =\widetilde U_B(\xi)-\xi\widetilde U_B'(\xi)
 =U_B(\hat x_B(\xi)).
\end{equation}
This identity holds on both sides of the terminal kink because
$\widetilde U_B$ is $C^1$ there.

We justify the generalized It\^o formula before applying it to
$e^{-\beta(s-t)}G(s,\Xi_s^y)$. Localize to a compact subset of
$(0,T)\times(0,\infty)$ and stop when $A^y$ exceeds a fixed level.
The change of variables $G(t,\xi)=R(T-t,\log\xi)$ gives
$G\in W^{1,2}_{\infty,\mathrm{loc}}$ by Lemma~\ref{lem:existence-R}.
Spatial smooth fit gives $G_\xi=0$ on the contact side and on the
boundary. Lemma~\ref{lem:time-space-derivative-estimate} gives
$W_\tau\to0$ from the continuation side, and hence
$G_t\to P'(t)$ there. Thus both first derivatives extend continuously
across every finite interior free boundary. Local optimal regularity
also bounds $G_{\xi\xi}$ on the localized set.

The diffusion coefficient of $\Xi^y$ is $-\theta\Xi^y$, whose
absolute value is bounded away from zero after this localization.
These are the local Sobolev and nondegeneracy conditions of the
generalized It\^o formula in \citet[Chapter~2, \S10]{Krylov1980}.
To include the possibly singular continuous finite variation part
of the cumulative multiplier, mollify $G$ on a slightly larger
compact set and apply the classical formula. The first derivatives
of the mollifications converge uniformly, so the finite variation
integrals converge, as do the localized stochastic integrals in
$L^2$. Off the free boundary the second derivatives converge
pointwise and remain uniformly bounded. By
\eqref{eq:boundary-zero-occupation}, the reflected state spends
zero Lebesgue time on the finite free boundary, so dominated convergence
also applies to the second order time integral.
Consequently the formula has no local time correction. Initial and
terminal jumps are treated by their actual function increments,
separately from this continuous calculation.

When the cumulative multiplier is inactive,
\eqref{eq:G-linear-equation-proof}--\eqref{eq:fenchel-G-running-proof}
give the discounted running term
$-e^{-\beta(s-t)}u(c_s^y)\,ds$. The zero occupation property ensures
that this identifies the drift for almost every time.
When the cumulative multiplier acts continuously, $\Xi_s^y=b(s)$
and $G_\xi=N_\xi=0$, so the finite variation reflection term
vanishes. At a jump of the cumulative multiplier, the states before and
after intervention lie in the intervention region, where
$G(s,\xi)=P(s)$. Thus the function increment over the jump is zero.
Finally, Lemma~\ref{lem:reflection-moments} and the growth bound for
$G$ justify the limits at the initial and terminal dates and the
removal of localization. Taking conditional expectations yields
\[
 G(t,\Xi_t^y)
 =\mathbb E_t\!\left[
 \int_t^T e^{-\beta(s-t)}u(c_s^y)\,ds
 +e^{-\beta(T-t)}U_B(X_T^y)
 \right],
\]
which is \eqref{eq:shadow-welfare-identity}.  Since $N\ge0$, this immediately
gives $\mathcal W_t^y\ge P(t)$.  Finally,
Proposition~\ref{prop:dual-shadow-reflection} says that $d\mathcal Z^{*,y}$ is
supported on $N(t,\Xi_t^y)=0$, so
\eqref{eq:welfare-complementarity} follows.
\end{proof}

\subsection{Proof of Proposition \ref{prop:dynamic-credit-limit}}
\label{proof:prop:dynamic-credit-limit}
\begin{proof}
Fix $t\in[0,T]$ and $\xi>0$.  Let $\Phi_t(\xi,\mathcal Z)$ denote the dual
criterion in \eqref{eq:three_dim_singular_finite} with initial state
$(t,\xi,0)$ and a fixed admissible control $\mathcal Z\in\Pi_t(0)$.  Since
\[
 \mathcal Y_s^{t,\xi}=\xi e^{\beta(s-t)}\mathcal H_s^t,
\]
differentiation with respect to $\xi$ gives, by the explicit multiplier moments and the CRRA bounds,
\begin{align}
 \partial_\xi\Phi_t(\xi,\mathcal Z)
 &=\mathbb E_t\!\left[
 \int_t^T \mathcal H_s^t
 \left\{\widetilde u'\!\left(
 \frac{\mathcal Y_s^{t,\xi}}{1+\mathcal Z_s}\right)
 +\epsilon\right\}ds\right.\notag\\
 &\hspace{3.1cm}\left.
 +\mathcal H_T^t\widetilde U_B'\!\left(
 \frac{\mathcal Y_T^{t,\xi}}{1+\mathcal Z_T}\right)
 \right].\label{eq:dynamic-envelope-fixed-control-proof}
\end{align}
At the reflected optimizer, Fenchel equality yields
$\widetilde u'(q)=-I(q)$ and
$\widetilde U_B'(q)=-\hat x_B(q)$.  Hence
\begin{equation}\label{eq:dynamic-envelope-cost-proof}
 \partial_\xi\Phi_t(\xi,\mathcal Z^{*,t,\xi})
 =-\mathcal B_t(\xi).
\end{equation}
The explicit formula
\[
 1+\mathcal Z_s^{*,t,\xi}
 =1\vee\xi\sup_{t\le u\le s}
 \frac{e^{\beta(u-t)}\mathcal H_u^t}{b(u)}
\]
shows that the optimal reflected control depends continuously on $\xi$,
pathwise and uniformly on $[t,T]$.  Applying the usual two sided
envelope inequalities with the optimizer at $\xi$ and at $\xi+h$, followed by
dominated convergence, gives
\[
 Q_\xi(t,\xi)=-\mathcal B_t(\xi).
\]
This proves \eqref{eq:dynamic-budget-envelope}.

We next identify the geometry of this map.  Put $\tau=T-t$ and $v=\ln\xi$.
Since
\[
 R(\tau,v)=G(t,\xi)=Q(t,\xi)-\xi Q_\xi(t,\xi),
\]
we have in the waiting region
\begin{equation}\label{eq:R-v-Qxx-proof}
 \partial_vR(\tau,v)
 =\xi\partial_\xi G(t,\xi)
 =-\xi^2Q_{\xi\xi}(t,\xi).
\end{equation}
Lemma~\ref{lem:state-monotonicity-R} gives $\partial_vR<0$ in the noncontact
region.  Therefore $Q_{\xi\xi}>0$ there, and
$x(t,\xi)=-Q_\xi(t,\xi)$ is strictly decreasing throughout the waiting region.
Continuity follows from Proposition~\ref{prop:construct-Q}.

Suppose first that $b(t)<\infty$.  On the intervention region
$[b(t),\infty)$ we have
\[
 Q(t,\xi)-\xi Q_\xi(t,\xi)=P(t).
\]
Equivalently,
\[
 \frac{d}{d\xi}\left(\frac{Q(t,\xi)}{\xi}\right)
 =-\frac{P(t)}{\xi^2}.
\]
Thus $Q(t,\xi)=P(t)+\varpi(t)\xi$ throughout the intervention region.  Hence
$Q_\xi(t,\xi)=\varpi(t)$ there, and continuity at $b(t)$ gives
\[
 -Q_\xi(t,\xi)=-Q_\xi(t,b(t))=\underline x(t),
 \qquad \xi\ge b(t).
\]
Together with strict decrease on $(0,b(t))$, this proves (i).  If
$b(t)=\infty$, the whole finite state space is a waiting region, so the same
strict monotonicity holds for every finite $\xi$.  The lower bound proved below
shows that $x(t,\xi)$ is bounded from below, so its decreasing limit exists
and equals the second line of \eqref{eq:dynamic-credit-limit}.  This proves
(ii).

At $t=T$, $Q(T,\xi)=\widetilde U_B(\xi)$ and the constrained envelope formula
gives
\[
 -Q_\xi(T,\xi)=-\widetilde U_B'(\xi)=\hat x_B(\xi).
\]
Since $b(T)=q_B$ and $\hat x_B(q_B)=\underline x_B$, assertion (iii) follows.

For every finite $\xi$, \eqref{eq:dynamic-budget-envelope} and
$X_T^{t,\xi}\ge\underline x_B$ imply
\begin{align*}
 x(t,\xi)
 &=\mathbb E_t\!\left[
 \int_t^T\mathcal H_s^t(c_s^{t,\xi}-\epsilon)ds
 +\mathcal H_T^tX_T^{t,\xi}\right]\\
 &>-\int_t^T e^{-r(s-t)}\epsilon\,ds
   +e^{-r(T-t)}\underline x_B\\
 &=-h(t)+e^{-r(T-t)}\underline x_B=\ell(t),
\end{align*}
where we used $c^{t,\xi}>0$ and
$\mathbb E_t[\mathcal H_s^t]=e^{-r(s-t)}$.  Passing to the boundary or to the
limit $\xi\to\infty$ yields
$\underline x(t)\ge\ell(t)>-h(t)$.

If $b(s)=\infty$ for every $s\in[t,T)$, the cumulative multiplier does not act before the
terminal time.  As $\xi\to\infty$,
$c_s^{t,\xi}$ decreases to $0$ for $s<T$, while the terminal reflection at
$b(T)=q_B$ implies that $X_T^{t,\xi}$ decreases to $\underline x_B$.  Monotonicity provides
an integrable dominating claim, so dominated convergence in
\eqref{eq:dynamic-budget-map} yields
\[
 \underline x(t)=-h(t)+e^{-r(T-t)}\underline x_B.
\]
This proves (iv).  Taking $t=0$ gives the abbreviation
\eqref{eq:initial-credit-limit-abbrev} for the initial credit limit.
\end{proof}

\subsection{Proof of Corollary \ref{cor:feasible-wealth-domain}}
\label{proof:cor:feasible-wealth-domain}
\begin{proof}
Fix $t<T$.  The weak duality argument restarts verbatim at time $t$. For every
admissible continuation strategy with current wealth $x$ and every $\xi>0$,
its continuation utility is bounded above by
\begin{equation}\label{eq:dynamic-weak-duality-feasible-proof}
 Q(t,\xi)+\xi x.
\end{equation}
Suppose $x<\underline x(t)$.  Since
$-Q_\xi(t,\xi)=x(t,\xi)\to\underline x(t)$ as the effective dual state
approaches the intervention boundary (or infinity when $b(t)=\infty$), there
exist $\delta>0$ and $\xi_0$ such that
\[
 \frac{d}{d\xi}\bigl(Q(t,\xi)+\xi x\bigr)
 =Q_\xi(t,\xi)+x\le-\delta
\]
for all sufficiently large effective $\xi$.  Hence
$Q(t,\xi)+\xi x\to-\infty$.  This contradicts
\eqref{eq:dynamic-weak-duality-feasible-proof}, because an admissible strategy
has finite continuation utility.  Therefore no feasible continuation exists
below $\underline x(t)$.

Conversely, Proposition~\ref{prop:dynamic-credit-limit} shows that
$x(t,\xi)=\mathcal B_t(\xi)$ is continuous, strictly decreasing on the waiting
range, and tends to $\underline x(t)$ at its upper endpoint.  The same CRRA
estimate used in Proposition~\ref{prop:dual-envelope-budget} gives
$x(t,\xi)\to\infty$ as $\xi\to0^+$.  Thus, for every
$x>\underline x(t)$ there is a unique $\xi$ in the waiting region with
$\mathcal B_t(\xi)=x$.  Repeating the wealth and portfolio reconstruction on
$[t,T]$ produces an admissible continuation strategy with initial wealth $x$.
Therefore the infimum of the feasible wealth domain is exactly
$\underline x(t)$.  If $b(t)<\infty$, the same reflected construction with $\xi=b(t)$ finances the endpoint and has initial utility $G(t,b(t))=P(t)$. Weak duality proves its optimality. The terminal assertion follows directly from $X_T\ge\underline x_B$.
\end{proof}

\subsection{Proof of Proposition \ref{prop:dual-envelope-budget}}
\label{proof:prop:dual-envelope-budget}
\begin{proof}
Since $J(y)=Q(0,y)$ and $Q_\xi$ is continuous, Proposition
\ref{prop:dynamic-credit-limit} immediately yields
\[
 J'(y)=Q_\xi(0,y)=-\mathcal B_0(y)=-\mathcal B(y).
\]
The same proposition shows that $\mathcal B$ is continuous and nonincreasing,
and that it is strictly decreasing on the waiting range $y<b(0)$.  It also
gives
\[
 \lim_{y\to\infty}\mathcal B(y)=\underline x(0)=\underline x
 \ge-h(0)+e^{-rT}\underline x_B>-h(0).
\]

It remains to verify the behavior of the budget map as $y\to0^+$.  Because
$1+\mathcal Z_t^{*,y}\ge1$,
\[
 \Xi_t^y\le\mathcal Y_t^y
 =y e^{\beta t}\mathcal H_t,
\]
and hence, using $I(q)=q^{-1/\gamma}$,
\[
 c_t^y=(\Xi_t^y)^{-1/\gamma}
 \ge y^{-1/\gamma}e^{-\beta t/\gamma}
 \mathcal H_t^{-1/\gamma}.
\]
The lognormal state price density has all finite moments on the finite horizon,
so for some deterministic constant $C>0$,
\[
 \mathbb E\!\left[\int_0^T\mathcal H_t c_t^y\,dt\right]
 \ge C y^{-1/\gamma}.
\]
The income term and the terminal floor term remain bounded from below, and therefore
$\mathcal B(y)\to\infty$ as $y\to0^+$.

Consequently $\mathcal B$ maps the waiting interval $(0,b(0))$ continuously
and strictly decreasingly onto $(\underline x,\infty)$, with the obvious
interpretation $(0,b(0))=(0,\infty)$ when $b(0)=\infty$.  Thus each
$x>\underline x$ determines a unique $y^*$ satisfying
$\mathcal B(y^*)=x$.  Since
\[
 \frac{d}{dy}\bigl(J(y)+yx\bigr)=x-\mathcal B(y),
\]
the derivative changes sign from negative to positive at $y^*$, so $y^*$
is the global minimizer of the dual objective.  This proves all assertions.
\end{proof}

\subsection{Proof of Proposition \ref{prop:wealth-credit-reconstruction}}
\label{proof:prop:wealth-credit-reconstruction}
\begin{proof}
Let $A_t^*:=1+\mathcal Z_t^*$ and fix $t\in[0,T]$.  From the running supremum
representation of the optimal cumulative multiplier,
\[
 A_s^*=1\vee\sup_{0\le u\le s}\frac{\mathcal Y_u^{y^*}}{b(u)},
 \qquad s\ge t.
\]
Since $A_t^*$ already contains the entire running maximum up to time $t$, and
$\Xi_t^*=\mathcal Y_t^{y^*}/A_t^*\le b(t)$, division by $A_t^*$ gives
\begin{equation}\label{eq:future-scaling-reflection-proof}
 \frac{A_s^*}{A_t^*}
 =1\vee\sup_{t\le u\le s}
 \frac{\Xi_t^*e^{\beta(u-t)}\mathcal H_u^t}{b(u)},
 \qquad t\le s\le T.
\end{equation}
Thus, conditional on $\mathcal F_t$, the future normalized dual state and the
future increments of the cumulative multiplier are exactly those of the restarted
verified problem with initial normalized state $\Xi_t^*$ and zero
multiplier before intervention.  In particular,
\[
 c_s^*=c_s^{t,\Xi_t^*},\qquad
 X_T^*=X_T^{t,\Xi_t^*},\qquad s\in[t,T].
\]
Using \eqref{eq:replicating-wealth}, \eqref{eq:dynamic-budget-map}, and
Proposition~\ref{prop:dynamic-credit-limit}, we therefore obtain
\[
 X_t^*=\mathcal B_t(\Xi_t^*)
 =-Q_\xi(t,\Xi_t^*)=x(t,\Xi_t^*),
\]
which proves \eqref{eq:wealth-Qxi-representation}.

By the Skorokhod state constraint, $\Xi_t^*\le b(t)$.  Proposition
\ref{prop:dynamic-credit-limit} shows that the wealth image of the waiting
region lies above $\underline x(t)$ and that the boundary $b(t)$ maps to
$\underline x(t)$.  Hence
$X_t^*\ge\underline x(t)$, and the lower bound in
\eqref{eq:optimal-wealth-credit-limit} follows from
\eqref{eq:dynamic-credit-limit-lower-bound}.  The terminal statement follows
from \eqref{eq:terminal-wealth-dual-map}.

Finally, Proposition~\ref{prop:dual-shadow-reflection} implies that
$d\mathcal Z_t^*$ is carried by the set $\{\Xi_t^*=b(t)\}$ (with no increase
when $b(t)=\infty$).  On this support,
\eqref{eq:credit-limit-collapse} and
\eqref{eq:wealth-Qxi-representation} give
$X_t^*=\underline x(t)$.  Therefore
\[
 \int_{[0,T]}(X_t^*-\underline x(t))\,d\mathcal Z_t^*=0,
\]
which is \eqref{eq:wealth-credit-complementarity}.
\end{proof}

\subsection{Proof of Theorem \ref{thm:strong-duality}}\label{proof:thm:strong-duality}
\begin{proof}
Choose $y^*$ such that $\mathcal B(y^*)=x$.  By
\eqref{eq:replicating-wealth}, $X_0^*=\mathcal B(y^*)=x$ and
$X_T^*$ is the prescribed terminal wealth.  Define $M^X$ by
\eqref{eq:martingale-reconstruction}.  The martingale representation theorem
gives $dM_t^X=\varphi_t dB_t$, and comparison of
\eqref{eq:MX-HX} with It\^o's formula for $\mathcal H_tX_t^*$ gives
\[
 d(\mathcal H_tX_t^*)
 =-\mathcal H_t(c_t^*-\epsilon)dt
 +\mathcal H_t(\sigma\pi_t^*-\theta X_t^*)dB_t,
\]
where $\pi^*$ is exactly \eqref{eq:pi-star-explicit}.  Dividing by
$\mathcal H_t$ yields \eqref{eq:wealth_finite}.  Equations \eqref{eq:wealth-Qxi-representation}--
\eqref{eq:optimal-wealth-credit-limit} give
$X_t^*\ge\underline x(t)>-h(t)$ for $t<T$ and
$X_T^*\ge\underline x_B=\underline x(T)$.  The complementarity in wealth space
\eqref{eq:wealth-credit-complementarity} follows from the Skorokhod support
property and Proposition~\ref{prop:dynamic-credit-limit}.  Proposition
\ref{prop:shadow-welfare-identity} gives
$\mathcal W_t^{y^*}\ge P(t)$.  Lemma~\ref{lem:reflection-moments} gives the utility and consumption integrability. The terminal random variable defining $M^X$ has moments of every order, so its martingale representation satisfies $\int_0^T\varphi_t^2dt<\infty$ a.s. Since $\mathcal H^{-1}$ and the continuous replicating wealth are bounded on each sample path over $[0,T]$, formula \eqref{eq:pi-star-explicit} gives $\int_0^T(\pi_t^*)^2dt<\infty$ a.s. Thus every condition defining $\mathcal A(x)$ is verified.

By Proposition~\ref{prop:shadow-welfare-identity}, the utility of
$(c^*,\pi^*)$ is $\mathcal W_0^{y^*}=G(0,\Xi_0^*)$.  Since $y^*<b(0)$ by
Proposition~\ref{prop:dual-envelope-budget}, no intervention occurs at time
$0$, so $\Xi_0^*=y^*$ and, by \eqref{eq:J-Q-initial} and
\eqref{eq:initial-budget-Qxi},
\[
 \mathcal W_0^{y^*}
 =Q(0,y^*)-y^*Q_\xi(0,y^*)
 =J(y^*)+y^*\mathcal B(y^*)
 =J(y^*)+y^*x.
\]
Weak duality \eqref{eq:weak_duality_finite} gives $V(x)\le J(y)+yx$ for every
$y>0$, and Proposition~\ref{prop:dual-envelope-budget} shows that $y^*$
minimizes the right hand side.  Hence
\[
 V(x)=J(y^*)+y^*x=\inf_{y>0}\{J(y)+yx\},
\]
and $(c^*,\pi^*)$ is primal optimal.  The budget constraint binds because
$\mathcal B(y^*)=x$, and \eqref{eq:welfare-complementarity} is part of
Proposition~\ref{prop:shadow-welfare-identity}. Uniqueness follows from strict concavity. The admissible class is convex in $(c,X_T)$, since the continuation value is concave in these variables and the wealth equation is linear, so two optimal pairs $(c,X_T)$ that differ on a set of positive $dt\otimes d\mathbb P$ measure would have a strictly better convex combination. The replicating wealth and portfolio are then determined by \eqref{eq:replicating-wealth} and \eqref{eq:pi-star-explicit}.
\end{proof}

\end{document}